\numberwithin{equation}{section} \theoremstyle{plain}
\newtheorem{thm}[equation]{Theorem}    \newtheorem{prop}[equation]{Proposition}
\newtheorem{lem}[equation]{Lemma}      \newtheorem{cor}[equation]{Corollary}
\theoremstyle{definition}
\newtheorem{defn}[equation]{Definition}\newtheorem{exa}[equation]{Example}
\theoremstyle{remark}
\newtheorem{rem}[equation]{Remark}
\newtheorem{obs}[equation]{Observation}
\newtheorem*{rem*}{Remark}
\newenvironment{proof*}[1]{\subsubsection*{#1}}{\qed\smallskip}
\def\N{\mathbb{N}}      \def\R{\mathbb{R}}      \def\Z{\mathbb{Z}}
\def\a{\alpha}          \def\b{\beta}           \def\g{\gamma}
\def\d{\delta}          \def\e{\varepsilon}     \def\k{\kappa}
\def\la{\lambda}                \def\t{\theta}
\def\bA{\bar{A}}        \def\bB{\bar{B}}        \def\bC{\bar{C}}
\def\bQ{\bar{Q}}        \def\bga{\bar{\alpha}}  \def\bgb{\bar{\beta}}
\def\bm{\bar{m}}        \def\bs{\bar{s}}        \def\bu{\bar{u}}
\def\bv{\bar{v}}        \def\bw{\bar{w}}
\def\bx{\bar{x}}        \def\by{\bar{y}}        \def\bz{\bar{z}}
\def\bT{\overline{T}\,} \def\BT#1{\overline{T_{#1}}\,}
\def\cF{\mathcal{F}}    \def\tx{\tilde{x}}      
\def\mino{\wedge}       \def\maxo{\vee}
\DeclareMathOperator{\dist}{dist}
\DeclareMathOperator{\len}{len}
\def\({\left(}        \def\){\right)}
\def\hcat(#1){CAT$(#1;*)$}   
\def\hrcat(#1){rCAT$(#1;*)$} 
\def\rcat(#1){rCAT$(#1)$}    
\def\ip#1#2#3{\left<#1,#2;\,#3\right>}
\def\IP<#1,#2;#3>{\ip{#1}{#2}{#3}}
\def\rf#1{\@rf{#1}#1:;;}
\def\rfs#1{\@rfs{#1}#1:;;}
\def\rfm#1{\@rfF#1<>;;}
\def\@C{C}\def\@E{E}\def\@F{F}\def\@f{f}\def\@L{L}\def\@O{O}\def\@P{P}
\def\@Q{Q}\def\@R{R}\def\@S{S}\def\@T{T}\def\@X{X}\def\@D{D}\def\@s{s}
\def\@rf#1#2:#3;;{\def\@b{#2}
  \ifx\@b\@C Corollary~\ref{#1}\else%
  \ifx\@b\@E (\ref{#1})\else
  \ifx\@b\@F Fact~\ref{#1}\else%
  \ifx\@b\@f Figure~\ref{#1}\else%
  \ifx\@b\@L Lemma~\ref{#1}\else%
  \ifx\@b\@O Observation~\ref{#1}\else%
  \ifx\@b\@P Proposition~\ref{#1}\else%
  \ifx\@b\@Q Question~\ref{#1}\else%
  \ifx\@b\@R Remark~\ref{#1}\else%
  \ifx\@b\@S Section~\ref{#1}\else%
  \ifx\@b\@T Theorem~\ref{#1}\else%
  \ifx\@b\@X Example~\ref{#1}\else%
  \ifx\@b\@D Definition~\ref{#1}\else%
  \ifx\@b\@s \S\ref{#1}\else
  \ref{#1}\fi\fi\fi\fi\fi\fi\fi\fi\fi\fi\fi\fi\fi\fi}
\def\@rfs#1#2:#3;;{\def\@b{#2}
  \ifx\@b\@C Corollaries~\ref{#1}\else%
  \ifx\@b\@F Facts~\ref{#1}\else%
  \ifx\@b\@f Figures~\ref{#1}\else%
  \ifx\@b\@L Lemmas~\ref{#1}\else%
  \ifx\@b\@O Observations~\ref{#1}\else%
  \ifx\@b\@P Propositions~\ref{#1}\else%
  \ifx\@b\@Q Questions~\ref{#1}\else%
  \ifx\@b\@R Remarks~\ref{#1}\else%
  \ifx\@b\@S Sections~\ref{#1}\else%
  \ifx\@b\@T Theorems~\ref{#1}\else%
  \ifx\@b\@X Examples~\ref{#1}\else%
  \ifx\@b\@D Definitions~\ref{#1}\else
  \ref{#1}\fi\fi\fi\fi\fi\fi\fi\fi\fi\fi\fi\fi}
\def\@rfF<#1>#2;;{\def\@c{#2}
  \@rfs{#1}#1:;;\ifx\@c\empty\else\@rfL:#2;;\fi}
\def\@rfL:#1<#2>#3;;{\def\@b{#2}\def\@c{#3}
  #1\ifx\@b\empty\else\ref{#2}\ifx\@c\empty\else\@rfL:#3;;\fi\fi}
\begin{document}
\title[Rough CAT(0) spaces]{Rough CAT(0) spaces}
\author{S.M. Buckley and K. Falk}

\thanks{The authors were supported by Science Foundation Ireland.}

\date{}

\address{Department of Mathematics and Statistics, NUI Maynooth, Maynooth,
Co. Kildare, Ireland}%
\email{stephen.buckley@maths.nuim.ie}

\address{Universit\"at Bremen, FB 3 - Mathematik, Bibliothekstra{\ss}e 1,
28359 Bremen, Germany}%
\email{khf@math.uni-bremen.de}

\begin{abstract}
We investigate various notions of rough CAT(0). These conditions define
classes of spaces that strictly include the union of all Gromov hyperbolic
length spaces and all CAT(0) spaces.

\bigskip
\noindent
{\sc AMS classification:} Primary 51M05, 51M10. Secondary: 54B10, 51F99.\\
{\sc Keywords:} Gromov hyperbolic spaces, CAT(0) spaces, rough CAT(0) spaces.
\end{abstract}

\maketitle

\setlength\parskip{\smallskipamount}
\setlength\parindent{0pt}

\section{Introduction}

Gromov hyperbolic spaces and CAT(0) spaces have been intensively studied, in
particular with regard to their boundary theories, which display many common
features as for instance the presence of canonical boundary topologies. It
is thus natural to ask whether there is a `unified theory' including Gromov
hyperbolic spaces, CAT(0) spaces, and more, together with as much common
boundary theory as possible. In this paper we discuss various possible
variants of such a `unified theory' of so-called rough CAT(0) spaces, also
taking into consideration some existing weak notions of nonpositive
curvature. We first investigate properties of the interior of such spaces,
such as the property of having (roughly) unique geodesics, and then produce
non-trivial examples of rough CAT(0) spaces. In a sequel of this paper
\cite{BF}, we investigate the boundary theory within the `unified theory' of
Gromov hyperbolic and CAT(0) spaces introduced here.

There are already a wide variety of conditions related to rough CAT($\k$).
In geometric group theory alone, there are notions such as semi-hyperbolic
groups in the sense of Alonso and Bridson \cite{AB}, and
(bi)automatic~\cite{GS} and (bi)combable groups (for all of which,
see also \cite{Br} and the references therein),
but these notions are much weaker than CAT(0), as are metric geometry
notions such as the Ptolemaic condition (\cite{FLS}, \cite{BFW}), Busemann
convexity \cite{Bu}, $k$-convexity \cite{O}, and $L$-convexity \cite{O}.
Rough CAT(0) is closer to CAT(0) than are any of these conditions, allowing
us to prove for rough CAT(0) spaces (here and in \cite{BF}) analogues of
more parts of the CAT(0) theory than can be proved for any of these other
notions. Another related notion is the CAT(-1,$\e$) notion of Gromov
(\cite{G}, \cite{DG}) which implies Gromov hyperbolicity: this is very
closely related to rough CAT($\k$) for $\k<0$, a notion we briefly consider
and show to be equivalent to Gromov hyperbolicity.

Recall that in the context of geodesic metric spaces, $\delta$-hyperbolic
spaces, $\delta \geq 0$, are spaces with the property that for every
geodesic triangle, each side of the triangle is contained in a
$\delta$-neighborhood of the union of the other two sides.
On the other hand, CAT(0) spaces are geodesic spaces with metric $d$ having
the property that for any two points $u$ and $v$ on a geodesic triangle the
comparison points $\bu$ and $\bv$ in some Euclidean comparison triangle
satisfy $d(u,v) \leq |\bu - \bv|$. It is thus natural to introduce some
amount of `additive fudge' to this comparison property in order to obtain
the notion of a rough CAT(0) space.

We work in length spaces and thus replace geodesic triangles and segments by
$h$-short triangles made of $h$-short segments, which were introduced by
V{\"a}is{\"a}l{\"a} \cite{Va} in the context of Gromov hyperbolicity: a
$h$-short segment, $h\ge0$, is a path whose length is larger by at most $h$
than the distance between its endpoints. Comparison triangles can be defined
using the distances between vertices, and one could then attempt to define a
rough CAT(0) condition by introducing a uniform additive fudge to the CAT(0)
condition. There is, however, a problem with choosing a fixed $h$, since
then even the Euclidean plane would not be rough CAT(0): $h$-short segments
are not forced to remain a uniformly bounded distance apart when the
distance between their common endpoints increases; see Example~\ref{X:big
h}. Thus, $h$ must depend on how far apart are the vertices of a $h$-short
triangle.

Since the definition can be formulated in this generality, we introduce
rough CAT($\k$) spaces with $-\infty \leq \k \leq 0$; the case $\k>0$ is
trivial and we discard it. We write rCAT($\k$) as an abbreviation of ``rough
CAT($\k$)''. We define a notion of rCAT($\k$) spaces with an explicit upper
bound on $h$ which, although useful for many purposes, seems a little
contrived. We therefore also define a variant condition \hrcat(\k), where
the positive upper bound on $h$ is an arbitrary positive function of the
vertices of the triangle. This variant is aesthetically more pleasing, but
turns out to be equivalent to the original notion of rCAT($\k$), a fact that
will prove to be quite useful in \rf{S:examples}.

We also define weak and very weak rCAT($\k$) conditions. The weak rCAT($\k$)
condition, which is equivalent to the full strength CAT($\k$) condition at
least when $k<0$, is equivalent to a certain $4$-point subembedding
condition which makes it clear that it is stable under many limiting
processes. The very weak rCAT(0) condition will be seen to be equivalent to
the bolicity condition of Kasparov and Skandalis \cite{KS1}, \cite{KS2} that
was introduced in the context of their work on the Baum-Connes and Novikov
Conjectures.

Some of the results mentioned above are established in \rf{S:rcatk}, and the
remaining ones are proven in \rf{S:rcat0} where, motivated by the fact that
CAT(0) spaces are uniquely geodesic, we explore a rough unique geodesic
property for (weak) rough CAT(0) spaces; see \rf{T:r-uniq-geo}. We also
prove in \rf{S:rcat0} that every CAT(0) space is $(2+\sqrt{3})$-rCAT(0).

Knowing that the class of rCAT(0) spaces includes both Gromov hyperbolic
length spaces and CAT(0) spaces,
it is natural to ask whether there are rCAT(0) spaces that are neither
CAT(0) nor Gromov hyperbolic. In \rf{S:examples} we give two constructions
(products and gluing) for getting new rCAT(0) spaces from old ones,
which easily produce such examples.

In \rf{T:product-rCAT0} we show that the $l^2$-product of rough CAT(0)
spaces is also rough CAT(0). A rough CAT(0) space that is neither CAT(0) nor
Gromov hyperbolic is thus obtained by taking the $l^2$-product of a Gromov
hyperbolic space that is not CAT(0) and a CAT(0) space that is not Gromov
hyperbolic (e.g.~the $l^2$-product of the unit circle and the Euclidean
plane).

\rf{T:glue1} shows that gluing rough CAT(0) spaces along bounded isometric
subspaces also gives rough CAT(0) spaces, but \rf{X:glue} shows that this
mechanism breaks down as soon as we ask for unbounded gluing sets, even if
they are convex. Finally, \rf{P:VS} shows that normed vector spaces do not
produce interesting examples, since they must be CAT(0) if they are rough
CAT(0).

We wish to thank the referee for carefully reading the paper and for
spotting an error in an earlier version of \rf{X:glue}.

\section{Preliminaries} \label{S:Prelims}

Let $(X,d)$ be a metric space.
We shall not distinguish notationally between paths
$\g:I\to X$, $I\subset\R$, and their images $\g(I)$.
Suppose $(X,d)$ is rectifiably connected. We define the {\em intrinsic metric
associated with $d$} by
$$
l(x,y) :=
  \inf\{ \len(\g)\,:\, \g \text{ is a path in $X$ containing $x,y$}\}\,.
$$
$(X,d)$ is a {\em length space} if $l=d$. A path $\g$ of length
$d(x,y)$ joining $x,y\in X$ is called a {\em geodesic segment}, and is
often denoted $[x,y]$. $(X,d)$ is a {\em geodesic space} if all pairs
of points can be joined by geodesic segments, that is, the above infimum is
always attained.


\begin{defn}\label{D:rough-geodesic}%
A \emph{$h$-short segment}, $h\ge0$, in the length space $(X,d)$ is a path
\\$\g:[0,L]\to X$, $L\ge0$, satisfying
$$
\len(\g) \geq d(\g(0),\g(L)) \geq \len(\g) - h.
$$
We denote $h$-short segments connecting points $x,y \in X$ by $[x,y]_h$. It
is convenient to use $[x,y]_h$ also for the image of this path, so instead
of writing $z=\g(t)$ for some $0\le t\le L$, we often write $z\in[x,y]_h$.
Given such a path $\g$ and point $z=\g(t)$, we denote by $[x,z]_h$ and
$[z,y]_h$ respectively the subpaths $\g|_{[0,t]}$ and $\g|_{[t,L]}$,
respectively; note that both of these are $h$-short segments. We sometimes
write $\g[x,z]$ and $\g[z,y]$ in place of $[x,z]_h$ and $[z,y]_h$ if we need
to specify the short path (or geodesic) of which we are taking a subpath.
\end{defn}

The above notation requires further explanation because of its ambiguity:
given points $x,y$ in a length space $X$, there are always many short
segments $[x,y]_h$ for each $h>0$, so the notation $[x,y]_h$ involves a
choice. When we use this notation in any part of this paper (by a {\it
part}, we mean a definition or a statement or proof of a result), the choice
of such a path does not affect the truth of the underlying statements.
However, all subsequent uses of $[x,y]_h$ in the same part of the paper
refer to the same choice of short segment, and subsequent uses of $[x,z]_h$
and $[z,y]_h$ for $z\in[x,y]_h$ refer to subpaths of this choice of
$[x,y]_h$. Even once we fix $\g = [x,y]_h:[0,L] \to X$, the definitions of
such subpaths $[x,z]_h$ and $[z,y]_h$ may require a choice of $t\in[0,L]$
for which $z=\g(t)$ (since $[x,y]_h$ might not be an arc). The first use of
$[x,z]_h$ or $[z,y]_h$ in any part of the paper involves such a choice, and
all subsequent uses of either $[x,z]_h$ or $[z,y]_h$ in the same part is
consistent with this choice of $t$.

Note that a $0$-short segment is a geodesic segment; in this case, we simply
write $[x,y]$ instead of $[x,y]_0$, and we also write $(x,y)$ for the
subpath of $[x,y]$ with endpoints removed. Geodesic segments are used in
this paper only in the context of the model spaces $M^2_\k$.

\begin{rem}
The fact that $(X,d)$ is assumed to be a length space ensures that for any
$x,y\in X$ and $h>0$, there exists an $h$-short segment $[x,y]_h$.
\end{rem}


Given a number $\k \in \R$, the metric model space $M^2_\k$ is defined as
follows. $M^2_0$ is the Euclidean plane, $M^2_\k$, $\k >0$, is obtained
from the sphere by multiplying the metric with $1/\sqrt{\k}$,
and $M^2_\k$, $\k <0$, is obtained from the hyperbolic plane by multiplying
the metric with $1/\sqrt{-\k}$.
For more details we refer for instance to \cite[Chapter I.2]{BH}.

When $\k=-\infty$, $M^2_\k$ is the union of the real and
imaginary axes of $\R^2$ with the length metric attached.
This is a much smaller space than what $M^2_{-\infty}$ would be if it were
defined as a cone at infinity of the space $M^2_\k$ for $\k\in(-\infty,0)$.
We are, however, only interested in embeddings and subembeddings of three or four
points in our model space, and for these our simple definition of
$M^2_{-\infty}$ suffices.

Since only the case $\k = 0$ will be considered for the bulk of this paper,
the distance between $a,b \in M^2_\k$ is denoted by $|a - b|$, no matter
what value $\k$ has. For $\k > 0$, let $D_\k$ denote the diameter of
$M^2_\k$; for $-\infty\le \k \le 0$, set $D_\k$ to be infinity.

The following result is referred to as Alexandrov's lemma and will be
instrumental for the considerations in Section~\ref{S:rcatk}.

\begin{lem}[Alexandrov's lemma]\label{L:Alex}%
Let $\k\in\R$ and consider distinct points $A$, $B$, $B'$, $C\in M^2_\k$; if
$\k>0$, we assume that
$|B-C|+|C-B'|+|B-A|+|A-B'|<2D_\k$.
Suppose that
$B$ and $B'$ lie on opposite sides of the line $AC$.
(Note that the triangle inequality and the assumption above imply that
$|B-B'| < D_\k$.)

Consider geodesic triangles $T:=T(A,B,C)$ and $T':=T(A,B',C)$. Let $\a$,
$\b$, $\g$ (resp. $\a'$, $\b'$, $\g'$) be the vertex angles of $T$ (resp.
$T'$) at $A,B,C$ (resp. $A,B',C$). Suppose that $\g+\g'\ge\pi$. Then
$$ |B-C| + |C-B'| \le |B-A| + |A-B'|\,.$$
Let $\bT\subset M^2_\k$ be a geodesic triangle with vertices $\bA$, $\bB$,
$\bB'$ such that
$|\bA-\bB|= |A-B|$, $|\bA-\bB'| = |A-B'|$, and
$|\bB-\bB'|=|B-C|+|C-B'|<D_\k$.
Let $\bC$ be the point in $[\bB,\bB']$ with
$|\bB-\bC|=|B-C|$.
Let $\bga,\bgb,\bgb'$ be the vertex angles of $T$
at vertices $\bA, \bB, \bB'$. Then
$$
\bga \ge \a + \a',\quad \bgb \ge \b,\quad \bgb'\ge \b',\quad
|\bA-\bC|\ge |A-C|\,.
$$
Moreover, an equality in any of these implies the equality in the others,
and occurs if and only if $\g+\g'=\pi$.
\end{lem}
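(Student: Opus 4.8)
The plan is to prove Alexandrov's lemma by the standard "straightening" technique: we glue the two triangles $T$ and $T'$ along their common side $[A,C]$, then compare this glued figure with the single comparison triangle $\bT$. The key geometric insight is that the angle condition $\g+\g'\ge\pi$ means the glued figure is "at least as spread out" as a straight configuration, and straightening it (to make the glued side $[B,C]\cup[C,B']$ into a single geodesic $[\bB,\bB']$) can only increase the opposite angle at $A$ and the distance $|A-C|$.

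**First I would** establish the distance inequality $|B-C|+|C-B'|\le|B-A|+|A-B'|$. Here I would appeal to the law of cosines in $M^2_\k$ (or the hinge/angle comparison available from \cite{BH}): the quantity $|B-B'|$ in the \emph{straightened} configuration, where the angle at $C$ is exactly $\pi$, is maximal among all configurations with the given side lengths $|B-C|$, $|C-B'|$ and angle sum $\g+\g'\ge\pi$. Since $|\bB-\bB'|=|B-C|+|C-B'|$ is realized by the collinear arrangement, and since in $\bT$ the two sides from $A$ subtend this segment, the triangle inequality together with monotonicity of the comparison function yields the claim.

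**Next** I would prove the four inequalities for the angles and for $|A-C|$. The cleanest route is to set up the comparison triangle $\bT(\bA,\bB,\bB')$ with the prescribed side lengths and locate $\bC\in[\bB,\bB']$ with $|\bB-\bC|=|B-C|$. Comparing the sub-triangle $\bT(\bA,\bB,\bC)$ with the original $T(A,B,C)$: these share two side lengths ($|\bA-\bB|=|A-B|$ and $|\bB-\bC|=|B-C|$), so the angle at $\bC$ being at least $\g$ follows from whether $|\bA-\bC|\ge|A-C|$, and conversely. I would run the hinge-comparison (monotonicity of the $M^2_\k$ law of cosines in the included angle) simultaneously on both sub-triangles $T(A,B,C)$ and $T'(A,B',C)$, using that the two angles at $\bC$ (on the two sides of the straight segment $[\bB,\bB']$) sum to exactly $\pi$, while $\g+\g'\ge\pi$. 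This forces $|A-C|\le|\bA-\bC|$; feeding this back gives $\bgb\ge\b$ and $\bgb'\ge\b'$, and then $\bga=\bgb\text{-complement relations}$ combined with the angle sum in $\bT$ versus $T$ and $T'$ gives $\bga\ge\a+\a'$.

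**The main obstacle** will be handling the comparison uniformly across the three cases $\k>0$, $\k=0$, $\k<0$ (and the reader should note the diameter hypotheses $<2D_\k$ and $<D_\k$ are exactly what keep all the points and geodesics in a convex region where the law of cosines is monotone and geodesics are unique). The equality analysis is the other delicate point: I would show that each inequality is an equality precisely when no "straightening" occurred, i.e.\ when the glued side was already straight, which is the condition $\g+\g'=\pi$; since the law-of-cosines comparison function is \emph{strictly} monotone in its arguments away from the degenerate case, any single equality propagates to force $\g+\g'=\pi$ and hence equality throughout. I would cite \cite[Chapter I.2]{BH} for the strict monotonicity of the relevant $M^2_\k$ comparison functions rather than rederiving it.
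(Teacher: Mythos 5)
The paper itself offers no proof of this lemma --- it is quoted as a classical result (see \cite[I.2.16]{BH}) --- so your proposal must stand on its own, and as written it has two genuine gaps. The first is circularity in the perimeter inequality $|B-C|+|C-B'|\le|B-A|+|A-B'|$: you derive it from ``the triangle inequality'' applied ``in $\bar T$'', but a geodesic triangle $\bar T$ with side lengths $|A-B|$, $|A-B'|$, $|B-C|+|C-B'|$ exists in $M^2_\kappa$ precisely when those three numbers satisfy the triangle inequality, which is the very inequality to be proved. (Your other observation --- that the collinear position maximizes the distance between the endpoints of a hinge with side lengths $|B-C|$, $|C-B'|$ --- is just $|B-B'|\le|B-C|+|C-B'|$ and points in the unhelpful direction.) The standard repair, which your sketch alludes to (``straightening'') but never actually performs, is the unfolding construction: extend $[B,C]$ beyond $C$ to a point $D$ with $|C-D|=|C-B'|$; then $\angle ACD=\pi-\gamma\le\gamma'$, so the hinge form of the law of cosines gives $|A-D|\le|A-B'|$, whence $|B-C|+|C-B'|=|B-D|\le|B-A|+|A-D|\le|B-A|+|A-B'|$. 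This is the only place the hypothesis $\gamma+\gamma'\ge\pi$ can enter, and without the point $D$ you have no way to use it.

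The second gap is that your key monotonicity claim is false: $T(\bar A,\bar B,\bar C)$ and $T(A,B,C)$ share the two side lengths adjacent to $\bar B$, so the valid comparison (a vertex angle is a monotone function of the \emph{opposite} side when the two adjacent sides are fixed) relates $|\bar A-\bar C|\ge|A-C|$ to $\bar\beta\ge\beta$, not to the angles at $\bar C$. A vertex angle is not monotone in an \emph{adjacent} side with the opposite side fixed: in the Euclidean plane $\cos\gamma=(a^2+b^2-c^2)/(2ab)$ with $a=|B-C|$, $b=|A-C|$, $c=|A-B|$, and its derivative in $b$ changes sign with $b^2+c^2-a^2$. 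Moreover, even granting your equivalence, the logic does not close: assuming $|\bar A-\bar C|<|A-C|$ would yield (with your stated direction of monotonicity) that both angles at $\bar C$ shrink, hence $\pi<\gamma+\gamma'$, which is perfectly consistent with the hypothesis $\gamma+\gamma'\ge\pi$, so no contradiction results. The deductions must run the other way, starting from the unfolded triangle: comparing $T(A,B,D)$ with $\bar T$ (same two sides at $B$, third sides $|A-D|\le|A-B'|$) gives $\beta\le\bar\beta$; the hinge comparison at $B$ (same two sides, $\beta\le\bar\beta$) then gives $|A-C|\le|\bar A-\bar C|$; symmetrically $\beta'\le\bar\beta'$; and finally $\alpha+\alpha'\le\bar\alpha$ follows by comparing $T(A,B,B')$ --- whose angle at $A$ is exactly $\alpha+\alpha'$ because $B$ and $B'$ lie on opposite sides of the line $AC$ --- with $\bar T$, using $|B-B'|\le|B-C|+|C-B'|=|\bar B-\bar B'|$. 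Your proposed route to $\bar\alpha\ge\alpha+\alpha'$ via ``complement relations'' and ``the angle sum in $\bar T$ versus $T$ and $T'$'' is not available in $M^2_\kappa$ for $\kappa\ne0$, where angle sums depend on area and not only on side lengths. Your equality analysis via strict monotonicity is the right idea, but it can only be carried out once the inequalities themselves are established along the lines above.
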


A {\em geodesic triangle} $T(x,y,z)$ in a geodesic space $X$ is a collection
of three points $x,y,z\in X$ together with a choice of geodesic segments
$[x,y]$, $[x,z]$ and $[y,z]$.
Given such a geodesic triangle $T(x,y,z)$, a {\em
comparison triangle} is a geodesic triangle in $M^2_\k$,
$T(\bx,\by,\bz)$, such that corresponding distances
coincide: $d(x,y)=|\bx-\by|$, $d(y,z)=|\by-\bz|$, $d(z,x)=|\bz-\bx|$.
A point $\bu \in [\bx,\by]$ is a \emph{comparison point} for $u \in [x,y]$
if $d(x,u)=|\bx-\bu|$.

For details on the definition and characterizations of CAT($\k$) we refer
the reader for instance to \cite[Chapter II.1]{BH}. Let $X$ be geodesic and
$\k \in \R$. Let $T(x,y,z)$ be a geodesic triangle in $X$ with perimeter
less than $2 D_\k$, and consider a comparison triangle $T(\bx,\by,\bz)$ for
$T(x,y,z)$ in $M^2_\k$. We say that $T(x,y,z)$ satisfies the \emph{CAT($\k$)
condition} if for any $u,v \in T(x,y,z)$,
$$
d(u,v) \leq |u-v|.
$$
In the case that $\k \leq 0$ we call $X$ a \emph{CAT($\k$) space} if
all geodesic triangles in $X$ satisfy the CAT($\k$) condition. For
$\k>0$ we say $X$ is a CAT($\k$) space if all geodesic triangles
of perimeter less than $2D_\k$ satisfy the CAT($\k$) condition.
Equivalently, $u$ can be assumed to be one of the vertices of the
triangle $T(x,y,z)$ and $v$ can be assumed to be on the opposite side.
Even more, $v$ can be assumed to be a midpoint of the opposing side.

Another way of characterizing geodesic CAT($\k$) spaces, $\k \in \R$, is by
using the so-called $4$-point condition. Suppose $x_i\in X$ and $\bx_i\in
M^2_\k$ for $0\le i\le 4$, with $x_0=x_4$ and $\bx_0=\bx_4$. We say that
$(\bx_1,\bx_2,\bx_3,\bx_4)$ is a \emph{subembedding} of $(x_1,x_2,x_3,x_4)$
in $M^2_\k$ if
\begin{align*}
&d(x_i,x_{i-1}) =   |\bx_i-\bx_{i-1}|\,, \quad 1\le i\le 4\,, \\
&d(x_1,x_3)     \le |\bx_1-\bx_3| \quad\mathrm{and}\quad
d(x_2,x_4)     \le |\bx_2-\bx_4|\,.
\end{align*}
The metric space $(X,d)$ satisfies the \emph{$4$-point condition},
if every 4-tuple in $X$ has a subembedding in $M^2_\k$.
When $X$ is geodesic, this turns out to be equivalent to $X$ being CAT($\k$).

Also, $X$ is CAT(0) if and only if the \emph{CN inequality} of Bruhat and Tits is
satisfied, that is, for all $x,y,z \in X$ and all $m \in X$ with
$d(y,m)=d(m,z)=d(y,z)/2$,
$$
d(x,y)^2 + d(x,z)^2 \geq 2d(x,m)^2 + \frac{1}{2}d(y,z)^2.
$$

We refer the reader to \cite{GH}, \cite{CDP}, \cite{Va}, or
\cite[Part III.H]{BH} for the theory of Gromov hyperbolic spaces.
A metric space $(X,d)$ is {\it $\d$-hyperbolic}, $\d\ge 0$, if
$$ \ip xzw\ge \ip xyw\mino \ip yzw - \d\,, \qquad x,y,z,w\in X\,, $$
where $\ip xzw$ is the Gromov product defined by
$$ 2\ip xzw = d(x,w) + d(y,w) - d(x,y)\,. $$

The following is a version of the well-known {\it Tripod Lemma}, almost as
stated in \cite[2.15]{Va}, the only minor difference being that it is stated
for short arcs rather than short paths.

\begin{lem}\label{L:Tripod}
Suppose that $\g_1$ and $\g_2$ are unit speed $h$-short paths from $o$ to
$x_1$ and $x_2$, respectively, in a $\d$-hyperbolic space. Let $y_1=\g_1(t)$
and $y_2=\g_2(t)$ for some $t\ge 0$, where $d(o,y_1)\le\IP<x_1,x_2;o>$. Then
$d(y_1,y_2)\le 4\d+2h$.
\end{lem}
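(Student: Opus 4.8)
The plan is to reduce the statement to the standard Tripod Lemma for geodesics (which is the version in \cite[2.15]{Va} for short paths, or a minor variant thereof), by comparing the $h$-short paths $\g_1,\g_2$ with actual short paths and controlling the error. The key observation is that the hypothesis $d(o,y_1)\le\IP<x_1,x_2;o>$ locates $y_1$ (and $y_2$, since they are at the same parameter $t$) in the ``initial'' portion of each path, before the paths have had a chance to diverge. The heart of the matter is a Gromov-product estimate: I want to show that the Gromov product $\IP<y_1,y_2;o>$ is large, specifically at least $t$ up to an additive error controlled by $\d$ and $h$, and then convert this into the distance bound $d(y_1,y_2)\le 4\d+2h$.

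First I would record the basic length estimates coming from the $h$-shortness. Since $\g_1$ is a unit speed $h$-short path from $o$ to $x_1$ and $y_1=\g_1(t)$, the subpath $\g_1|_{[0,t]}$ has length $t$, so $d(o,y_1)\le t$, and $d(o,y_1)\ge t-h$ because $\g_1|_{[0,t]}$ is itself $h$-short; likewise $d(o,y_2)\in[t-h,t]$. Next I would estimate the Gromov products $\IP<x_1,y_1;o>$ and $\IP<x_2,y_2;o>$: because $y_i$ lies on the $h$-short path from $o$ to $x_i$, the path from $o$ through $y_i$ to $x_i$ is nearly a geodesic, which forces $d(o,x_i)\ge d(o,y_i)+d(y_i,x_i)-h$, and hence $2\IP<x_i,y_i;o>=d(o,y_i)+d(o,x_i)-d(y_i,x_i)\ge 2d(o,y_i)-h\ge 2t-3h$. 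Thus each $y_i$ is, up to $O(\d+h)$, a point on a near-geodesic whose Gromov product with $x_i$ from $o$ is essentially $d(o,y_i)$.

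The main step is then the four-point hyperbolicity inequality applied with basepoint $o$. Using $\d$-hyperbolicity, $\IP<y_1,y_2;o>\ge \min\{\IP<y_1,x_1;o>,\IP<x_1,x_2;o>,\IP<x_2,y_2;o>\}-2\d$ (the cost of inserting two intermediate points $x_1,x_2$ between $y_1$ and $y_2$). By the previous paragraph the first and third terms are at least $t-\tfrac{3}{2}h$, and by hypothesis $\IP<x_1,x_2;o>\ge d(o,y_1)\ge t-h$. Hence $\IP<y_1,y_2;o>\ge t-\tfrac{3}{2}h-2\d$. Finally, unwinding the definition of the Gromov product,
$$
d(y_1,y_2)=d(o,y_1)+d(o,y_2)-2\IP<y_1,y_2;o>\le t+t-2\(t-\tfrac{3}{2}h-2\d\)=4\d+3h,
$$
so a crude version of the bound drops out immediately.

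**The hard part will be** tightening the constant from $3h$ down to the claimed $2h$; the rough argument above overcounts the $h$-slack by using it in both the length estimate for $d(o,y_i)$ and the near-collinearity estimate for $\IP<x_i,y_i;o>$. I expect that a more careful bookkeeping—exploiting that $\g_i|_{[0,t]}$ and $\g_i$ being $h$-short are not independent pieces of slack but refer to the same path, so the deficits are coupled rather than additive—will recover the sharp $4\d+2h$. Concretely, I would revisit the inequality $d(o,x_i)\ge d(o,y_i)+d(y_i,x_i)-h$ together with $d(o,y_i)\ge t-h$ and track exactly where each $h$ is consumed, rather than bounding the two Gromov products $\IP<y_i,x_i;o>$ and the length deficit separately. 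Alternatively, one can follow V\"ais\"al\"a's proof of the geodesic tripod lemma line by line, inserting the $h$-slack only at the single point where the path fails to be geodesic, which should yield the additive $2h$ directly.
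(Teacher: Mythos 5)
The paper itself gives no proof of this lemma: it is quoted, in slightly modified form, from V\"ais\"al\"a \cite[2.15]{Va}, so there is no in-paper argument to compare against. Your approach is the natural one and everything you actually prove is correct; the only shortfall is that your bookkeeping delivers $4\d+3h$ rather than $4\d+2h$, and you leave the tightening as a plan. The plan does work, and the fix is exactly the coupling you anticipate: do not round $d(o,y_1)$ and $d(o,y_2)$ down to $t-h$ inside the Gromov-product estimates while rounding them up to $t$ in the final step. Concretely, set $a:=d(o,y_1)$, $b:=d(o,y_2)$ and keep them as variables. Your own estimate already gives $\ip{y_i}{x_i}{o}\ge d(o,y_i)-h/2$, and the hypothesis gives $\ip{x_1}{x_2}{o}\ge a$; hence your two applications of the hyperbolicity inequality yield
$$
\ip{y_1}{y_2}{o}\ \ge\ \min\bigl\{a-\tfrac h2,\ a,\ b-\tfrac h2\bigr\}-2\d
\ =\ (a\mino b)-\tfrac h2-2\d\,.
$$
Then
$$
d(y_1,y_2)=a+b-2\ip{y_1}{y_2}{o}\ \le\ a+b-2(a\mino b)+h+4\d\ =\ |a-b|+h+4\d\,,
$$
and since both $a$ and $b$ lie in $[t-h,t]$ (by the $h$-shortness of the subpaths, as you observed), $|a-b|\le h$, giving exactly $d(y_1,y_2)\le 4\d+2h$. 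So your argument closes with this substitution alone; no new idea, and no line-by-line retracing of V\"ais\"al\"a's proof, is needed.
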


A map $f:(X,d_X)\to(Y,d_Y)$ is an
\emph{$(A,B)$-quasi-isometry} if there are constants $A>0$, $B\ge 0$ such
that
$$
\frac{1}{A}d_X(x_1,x_2) - B \leq d_Y(f(x_1),f(x_2))
\leq A \, d_X(x_1,x_2) +B
$$
for any $x_1,x_2 \in X$, and such that $\dist(y,f(X))\le B$, $y\in Y$.
Here, $\dist(x, A) := \inf \{ d(x,y): y \in A\}$ is the distance of
a point $x$ from a set $A$.
A \emph{$B$-rough isometry} is a $(1,B)$-quasi-isometry; $B$ is called the
\emph{roughness constant} of $f$.

We write $A\mino B$ and $A\maxo B$ for the minimum and maximum,
respectively, of two numbers $A,B$.


\section{Rough CAT($\k$) spaces: basic results} \label{S:rcatk}

In this section, we define rough CAT(0) spaces and some weaker variants of
them, and prove some some basic results involving these conditions.

\begin{defn}\label{D:comp-tri}%
A \emph{$h$-short triangle} $T:=T_h(x_1,x_2,x_3)$ with vertices
$x_1,x_2,x_3\in X$ is a collection of $h$-short segments $[x_1,x_2]_h$,
$[x_2,x_3]_h$ and $[x_3,x_1]_h$. Given such a $h$-short triangle $T$, a
\emph{comparison triangle} will mean a geodesic triangle
$\bT:=T(\bx_1,\bx_2,\bx_3)$ in the comparison space $M^2_\k$,
$-\infty\le\k<\infty$ such that $|\bx_i-\bx_j|=d(x_i,x_j)$,
$i,j\in\{1,2,3\}$. Furthermore, we say that $\bu\in \bT$ is a
\emph{comparison point} for $u\in T$, say $u\in [x_1,x_2]_h$, if
$$
|\bx-\bu| \leq \len([x,u]_h)
  \quad\text{and}\quad
|\bu-\by| \leq \len([u,y]_h)\,.
$$
If $\k=-\infty$, the comparison triangle is called a {\it comparison
tripod}.
\end{defn}

Note that $\bu$ is not uniquely determined by $u$ as in the case of
comparison points for triangles in CAT($\k$) spaces. Also, it immediately
follows from the definition that
$$
|\bx-\bu| \geq \len([x,u]_h) - h
  \quad\text{and}\quad
|\bu-\by| \geq \len([u,y]_h) - h.
$$
In order to avoid cluttered notation, we do not specify the comparison space
in the notation $T(\cdot,\cdot,\cdot)$; the space will always be clear from
the context.

\begin{rem}
Clearly, we can always find comparison triangles in $M^2_\k$ for any
$h$-short triangle $T_h(x,y,z)$ in any length space $X$, as long as
$d(x,y)+d(y,z)+d(z,x)\le 2D_\k$. In fact this amounts to the well-known fact
that triangles in $M^2_\k$ can be constructed with arbitrary sidelengths
$a\le b\le c$, as long as the perimeter $a+b+c$ is at most twice the
diameter of $M^2_\k$ and the triangle inequality $c\le a+b$ holds.
\end{rem}

Recall that a CAT($\k$) space is a geodesic space in which the distance
between any pair of points in a geodesic triangle is at most as large as the
distance between comparison points in a comparison triangle in $M^2_\k$. The
natural definition of rough CAT($\k$) should therefore involve a similar
distance inequality between an arbitrary pair of points in an $h$-short
triangle, and a pair of comparison points in an comparison triangle, for
some $h>0$. Our definition will indeed have this form (and we can work with
length spaces rather than geodesic spaces), but for $\k=0$ (the main case
that interests us!), the value of $h$ must depend on how far apart are the
vertices of the $h$-short triangle. The following example shows that a fixed
$h>0$ ``would not work'' when $\k=0$ in the sense that even the Euclidean
plane would fail to satisfy such a condition.

\begin{exa}\label{X:big h}
Let $h>0$ be fixed, and take $x$, $y=z$ to be the points given in coordinate
form as $(-R,0)$ and $(R,0)$, respectively, for some $R>0$. Let
$T:=T_h(x,y,z)$ be the short triangle consisting of the pair of line
segments from $x$ to $y$ and $y$ to $z$ (the latter being degenerate), plus
a path from $z$ to $x$ consisting of the two line segments from $z$ to
$u:=(0,t)$ and $u$ to $x$, where $t=\sqrt{h R+h^2/4}$; it is clear that $T$
is an $h$-short triangle. The comparison triangle $\bT$ is the (geodesic)
planar triangle with the same vertices. If we take $v$ to be the origin,
then $t$, and so $d(u,v)$, tends to infinity as $R$ tends to infinity, while
the distance between any comparison points in $\bT$ remains bounded.
\end{exa}

\begin{defn}\label{D:rCATk cond}%
Let $-\infty\le\k\le 0$, $C>0$, and $h\ge0$. Suppose $(X,d)$ is a length
space and that $T_h(x,y,z)$ is a $h$-short triangle in $X$. We say that
$T_h(x,y,z)$ satisfies the \emph{$C$-rough CAT($\k$) condition} if given a
comparison triangle $T(\bx,\by,\bz)$ in $M^2_\k$ associated with
$T_h(x,y,z)$, we have
$$ d(u,v) \leq |\bu - \bv| + C\,, $$
whenever $u,v$ lie on different sides of $T_h(x,y,z)$ and $\bu,\bv \in
T(\bx,\by,\bz)$ are corresponding comparison points.
\end{defn}

We define a {\it short function (for a metric space $X$)} to be any function
$H:X\times X\times X\to (0,\infty)$.

\begin{defn}\label{D:rCATk}%
Let $-\infty\le\k\le 0$ and $C>0$. We say that a length space $(X,d)$ is
\emph{$C$-rough \rcat(\k,*)}, or simply \emph{$C$-\hrcat(\k)} if there
exists a short function $H$ such that the following condition holds: if
$T_h(x,y,z)$ is a $h$-short triangle in $X$ for $h:=H(x,y,z)$, then
$T_h(x,y,z)$ satisfies the \emph{$C$-rough CAT($\k$) condition}.
\end{defn}

It is often useful to use a specific short function $H$, so we say that $X$
is $C$-\rcat(\k) if it is $C$-\hrcat(\k) with {\it standard short function}
$H$ defined by
$$ H(x,y,z) = \frac{1}{1\maxo d(x,y)\maxo d(x,z)\maxo d(y,z)} \,. $$
For both the \hrcat(\k) or \rcat(\k) conditions, we call the associated
parameter $C$ the \emph{roughness constant}; we omit this parameter if its
value is unimportant.

In the Euclidean plane, it follows from \rf{X:big h} that $h$ can be no
larger than some multiple of $1/(d(x,y)\maxo d(x,z)\maxo d(y,z))$ in order
for a $T_h(x,y,z)$ to satisfy a given rough CAT(0) condition. It is also
easy to see that any given rough CAT(0) condition requires that $h$ to be
bounded, regardless of how close together $x,y,z$ are. These considerations
show that our definition of a standard short function gives a short function
that is in general as large as it can possibly be, modulo multiplication by
a fixed constant, if we want all CAT(0) spaces to be rCAT(0).

In spite of this justification, our choice of standard short function still
seems a little contrived, and the definition of \hrcat(\k) spaces seems more
natural than that of \rcat(\k) spaces. However, we will show in
\rf{C:infinitesimal} that these two classes are equivalent, with
quantitative dependence of roughness constants.

Trivially if $X$ is $C$-\hrcat(\k) with a given short function $H$, it
is $C$-\hrcat(\k) with any other pointwise smaller short function $G$, so we
may always assume that the short function $H$ is pointwise no larger than
the standard short function.

In the above definitions, we could allow $\k$ to be positive, as long as we
restrict $x,y,z$ so that $d(x,y)+d(y,z)+d(z,x)<2D_\k$ (as in the definition
of CAT($\k$) for $\k>0$). However, it is trivial that every length space is
$C$-rough CAT($\k$) for $C>D_\k$, so the class of all rough CAT($\k$) spaces
is of no interest. For this reason, we insist that $-\infty\le\k\le 0$ from
now on.

It is well known (and easily shown) that the CAT($\k$) condition is
equivalent to a weaker version of the same definition where the comparison
inequality is assumed only when one point is a vertex, and one can even
restrict the other point to being a midpoint of a side. This leads us to the
following definitions.

\begin{defn}\label{D:wrCATk}%
Let $-\infty\le\k\le 0$ and $C>0$. A \emph{weak $C$-rough CAT($\k$)
condition} is similar to the $C$-rough CAT($\k$) condition defined in
\rf{D:rCATk}, except that it is required to hold only when $v=x$ and
$u\in[y,z]_h$. A \emph{very weak $C$-rough CAT($\k$) condition} is also
similar to the $C$-rough CAT($\k$) condition, except that it is required to
hold only when $v=x$ and $u \in [y,z]_h$ is a \emph{$h$-midpoint} of
$[y,z]_h$, that is, if it has the property that the Euclidean midpoint $\bu$
of $[\by,\bz]$ is a comparison point for $u$. \emph{Weak} and \emph{very
weak $C$-\hrcat(\k) spaces}, are then defined by making the associated
changes to the above definitions of $C$-\hrcat(\k) spaces, and we can
analogously define weak and very weak \rcat(\k) spaces.
\end{defn}

By elementary geometry, we see that if $x,y,z$ are points in the Euclidean
plane and $u$ lies on the line segment from $y$ to $z$ with $|u-y|=t|z-y|$,
then
\begin{equation}\label{E:plane CAT0}
(d(x,u))^2 \le (1-t)(d(x,y))^2+t(d(x,z))^2-t(1-t)(d(y,z))^2\,.
\end{equation}
It follows that the weak $C$-rCAT(0) condition can be written in the
following more explicit form: if $u=\la(s)$, where $\la:[0,L]\to X$ is a
$h$-short path from $y$ to $z$ parametrized by arclength, $h$ satisfies the
usual bound, and we have both $td(y,z)\le s$ and $(1-t)d(y,z)\le L-s$ for
some $0\le t\le 1$, then
\begin{equation}\label{E:wrCAT0}
(d(x,u)-C)^2 \le (1-t)(d(x,y))^2+t(d(x,z))^2-t(1-t)(d(y,z))^2\,.
\end{equation}
The very weak $C$-rCAT(0) condition can be written in a similar form, but
with the restriction $t=1/2$.

The following result summarizes what we can say about the relationships
between all these variants of rCAT($\k$) spaces.

\begin{thm}\label{T:various rcat0}
\mbox{}
\begin{enumerate}
\item For $-\infty \le\k<0$, the classes of rCAT($\k$), \hrcat(\k), weak
    rCAT($\k$), and weak \hrcat(\k) spaces all coincide with the class
    of Gromov hyperbolic spaces, and all containment implications hold
    with quantitative dependence of parameters.
\item The classes of rCAT(0) spaces and \hrcat(0) spaces coincide, again
    with quantitative control of parameters, and the same is true of
    weak rCAT(0) and weak \hrcat(0) spaces.
\item The class of rCAT(0) spaces is strictly larger than the union of
    the classes of Gromov hyperbolic and CAT(0) spaces.
\end{enumerate}
\end{thm}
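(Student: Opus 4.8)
The plan is to establish all three parts by recording the easy containments and then closing a cycle of implications in each curvature regime. Since the standard short function is itself an admissible short function, every \rcat(\k) space is \hrcat(\k) with the same constant; and since the weak condition is the special case $v=x$, $u\in[y,z]_h$ of the full condition, every \hrcat(\k) (resp.\ \rcat(\k)) space is weak \hrcat(\k) (resp.\ weak \rcat(\k)). Thus for each $\k$ I have the containments \rcat(\k) $\subseteq$ \hrcat(\k) $\subseteq$ weak \hrcat(\k) and \rcat(\k) $\subseteq$ weak \rcat(\k) $\subseteq$ weak \hrcat(\k), and only the reverse implications need work.

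For part~(a) I would close the cycle by proving the two extreme implications: every Gromov hyperbolic length space is \rcat(\k), and every weak \hrcat(\k) space is Gromov hyperbolic. For the first, the main tool is the Tripod Lemma (\rf{L:Tripod}): in a $\d$-hyperbolic space two $h$-short paths from a common vertex lie within $4\d+2h$ of one another up to the Gromov product, while a comparison triangle in $M^2_\k$ is uniformly thin with a thinness constant depending only on $\k$; combining these gives $d(u,v)\le|\bu-\bv|+C$ with $C=C(\d,\k)$, first for the tripod model $M^2_{-\infty}$ and then for finite $\k<0$ by comparing $M^2_\k$-triangles with their inscribed tripods. For the converse I would use that the weak rough comparison inequality is essentially a four-point subembedding into $M^2_\k$; since $M^2_\k$ is itself $\d_\k$-hyperbolic for $\k<0$, such a subembedding transfers the Gromov four-point inequality to $X$ with a bounded additive loss, yielding $\d$-hyperbolicity with $\d=\d(C,\k)$. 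One must note only that the short function $H$ may be very small, but a smaller $H$ merely strengthens the hypothesis, so this is harmless. Running these two implications around the containment cycle collapses all four classes onto the Gromov hyperbolic spaces, with quantitative control of all constants.

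For part~(b) the sole nontrivial point is that an arbitrary short function can be replaced by the standard one, i.e.\ \hrcat(0) implies \rcat(0) (and the same for the weak versions); this is \rf{C:infinitesimal}. Here the difficulty is real, since the given short function may be far smaller than the standard one, so a comparison known only for nearly geodesic triangles must be upgraded to one for standard-short triangles, necessarily enlarging $C$. I would argue by subdivision and reassembly: given a standard-short triangle and points $u,v$ on different sides, subdivide the configuration at $u$ and $v$, join the resulting vertices by auxiliary segments short enough for the hypothesis $H$ to apply, apply the $C$-rough comparison to the boundedly many small triangles so produced, and reassemble their Euclidean comparison triangles by means of Alexandrov's lemma (\rf{L:Alex}). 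Because only boundedly many pieces occur and the standard short function never exceeds $1$, the accumulated additive error stays bounded and one obtains \rcat(0) with a controlled roughness constant. I expect this bootstrapping from an arbitrary to the standard short function to be the principal obstacle of the whole theorem.

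For part~(c) I first observe that every Gromov hyperbolic length space is \rcat(0): by part~(a) it is \rcat(\k) for every $\k<0$, and since for fixed side lengths the distance between corresponding comparison points is no larger in $M^2_0$ than in $M^2_\k$, the \rcat(\k) inequality implies the \rcat(0) inequality. Combined with the fact, proved in \rf{S:rcat0}, that every CAT(0) space is \rcat(0), this shows that \rcat(0) contains the union of the two classes. For strictness I would take the $l^2$-product of the unit circle and the Euclidean plane, which is \rcat(0) by \rf{T:product-rCAT0}: it fails to be CAT(0) because the convex slice isometric to the circle is not uniquely geodesic, and it fails to be Gromov hyperbolic because the convex slice isometric to $\R^2$ contains arbitrarily fat geodesic triangles. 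This yields the strict inclusion and completes the three parts.
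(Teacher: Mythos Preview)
Your outlines for parts (a) and (c) are sound and match the paper's route: for (a) the paper goes through the $(C',\k)$-rough $4$-point condition (\rf{T:rough 4-pt}), which is exactly the ``subembedding into $M^2_\k$ plus $\d_\k$-hyperbolicity of $M^2_\k$'' step you sketch, together with the Tripod Lemma for the direction from hyperbolicity back to \rcat(\k); and (c) is handled just as you say.

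The gap is in your proposed proof of part~(b), i.e.\ of \rf{C:infinitesimal}. Subdivision at $u$ and $v$ does not work: the subtriangles you produce inherit as some of their sides the subpaths $[x,u]_h$, $[u,y]_h$, $[x,v]_h$, $[v,z]_h$ of the original triangle, and these are only $h$-short for the \emph{standard} $h$. The short function $H$ in the \hrcat(0) hypothesis may be far smaller than this standard $h$ on the vertex triples of your subtriangles, so the $C$-rough CAT(0) condition is simply not available for them, no matter how short you take the auxiliary segments joining the new vertices. Alexandrov's lemma can reassemble comparison triangles, but it cannot manufacture the comparison inequalities you would need as input. Since only boundedly many subdivisions are made, there is no iterative mechanism that makes the inherited sides shorter, and so the argument stalls.

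The paper bypasses this by a different idea: it proves a rough-uniqueness theorem for short segments (\rf{T:r-uniq-geo}) using auxiliary triangles $T^i=T_{h'}(x,y,z_i)$ whose \emph{three} sides are freshly chosen to be $h'$-short with $h'\le H(x,y,z_i)$, so that the \hrcat(0) hypothesis genuinely applies. This shows that any standard-$h$-short segment lies within $C+1+\sqrt3/2$ of an $H$-short segment with the same endpoints. One then replaces each side of the given standard-short triangle by an $H$-short side, applies the \hrcat(0) comparison to the new triangle, and pays the rough-uniqueness cost twice (once for $u$, once for $v$) to transfer the estimate back; this yields $C'=3C+2+\sqrt3$. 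The weak and very weak variants come out the same way with $C''=2C+1+\sqrt3/2$. So the missing idea is not subdivision and Alexandrov reassembly but rather replacement of sides via rough uniqueness of short segments.
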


Part (a) of this theorem follows from \rf{T:rough 4-pt} below, while part
(b) follows from \rf{C:infinitesimal}, and an example to prove part (c) was
given in the Introduction (see also \rf{S:examples}).

There are a few other possible relationships between these variant
rCAT($\k$) spaces whose truth we cannot determine. Specifically we do not
know if very weak rCAT($\k$) spaces are necessarily weak rCAT($\k$) (either
for $k<0$ or $k=0$), and we do not know if weak rCAT(0) spaces are
necessarily rCAT(0). While the class of rCAT(0) spaces is the main focus of
our interest in this paper, the (weak) rCAT($\k$) characterization of Gromov
hyperbolicity in the above theorem may also be of some interest.

We now wish to discuss another connection to existing notions of
non-positive curvature. In their work on the Baum-Connes and Novikov
Conjectures, Kasparov and Skandalis \cite{KS1}, \cite{KS2} introduced the
class of bolic spaces which, as our class of rCAT(0) spaces, includes both
Gromov hyperbolic spaces and CAT(0) spaces. It turns out that in the case of
length spaces, bolicity is equivalent to very weak rCAT(0). To see this, we
first note that by work of Bucher and Karlsson \cite{BK}, bolicity is
reduced to a condition reminiscent of the CN inequality of Bruhat and Tits
(\cite[p. 163]{BH} and \cite{BT}).

\begin{defn}\label{D:bolic}%
A metric space $X$ is called \emph{$\delta$-bolic}, for some $\delta > 0$, if
there is a map $m: X \times X \to X$ with the property that for all
$x,y,z \in X$
$$
2 d(m(x,y),z) \leq \sqrt{2d(x,z)^2 + 2d(y,z)^2 - d(x,y)^2} + 4\delta.
$$
\end{defn}

\begin{prop}
\label{P:bolic-vwrCAT(0)}
Let $X$ be a length space. If $X$ is very weak $C$-rCAT(0), $C>0$, then it
is $\delta$-bolic with $\delta=C/2$. If $X$ is $\delta$-bolic, $\delta>0$, then
it is very weak $C$-rCAT(0) with $C=4\delta + \sqrt{2}$.
\end{prop}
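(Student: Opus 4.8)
The plan is to prove each implication by directly comparing the defining inequality of $\delta$-bolicity with the explicit form of the very weak $C$-rCAT(0) condition. The key observation is that the bolic inequality in \rf{D:bolic} and the very weak rCAT(0) inequality (the $t=1/2$ case of \rf{E:wrCAT0}) are essentially the same inequality written in two different algebraic forms, and a map $m$ witnessing bolicity corresponds to a choice of midpoints witnessing very weak rCAT(0). To make this precise, first I would record the $t=1/2$ specialization of \rf{E:wrCAT0}: if $u$ is an $h$-midpoint of an $h$-short segment $[y,z]_h$, then
$$
(d(x,u)-C)^2 \le \tfrac12 d(x,y)^2 + \tfrac12 d(x,z)^2 - \tfrac14 d(y,z)^2\,.
$$
Multiplying by $4$ and taking square roots, this says $2(d(x,u)-C) \le \sqrt{2d(x,y)^2 + 2d(x,z)^2 - d(y,z)^2}$, which rearranges to $2d(x,u) \le \sqrt{2d(x,y)^2+2d(x,z)^2-d(y,z)^2} + 2C$. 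Setting $\delta = C/2$ gives exactly the bolic inequality with $m(y,z)$ taken to be a chosen $h$-midpoint $u$, which establishes the first implication.

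For the converse, the main subtlety is that a $\delta$-bolic space supplies a \emph{single} point $m(y,z)$ rather than a point lying on a short segment, so I must manufacture an $h$-midpoint of an actual $h$-short segment $[y,z]_h$ that is close to $m(y,z)$. Here I would use that $X$ is a length space: for $h := H(y,z)$ (the standard short function value), choose a short segment $[y,m(y,z)]_h$ concatenated with $[m(y,z),z]_h$ to form an $h$-short segment $[y,z]_h$ passing through $m(y,z)$, or more carefully arrange that the point $m(y,z)$ is (roughly) an $h$-midpoint, possibly after adjusting the arclength parameter by a controlled amount. Reversing the algebra above, the bolic inequality gives $2d(x,m(y,z)) \le \sqrt{2d(x,y)^2+2d(x,z)^2-d(y,z)^2}+4\delta$, and I would absorb the discrepancy between $m(y,z)$ and the true $h$-midpoint $u$ (which is at most of order $h$ plus the comparison-point slack) into the additive constant, yielding the claimed $C = 4\delta + \sqrt{2}$.

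The main obstacle, and the place where the constant $\sqrt2$ and the factor $4\delta$ come from, is the bookkeeping in this converse direction: I must control both the distance from $m(y,z)$ to a genuine $h$-midpoint of a short segment and the gap between the arclength midpoint condition (which defines an $h$-midpoint) and the metric midpoint furnished by $m$. Since $h = H(y,z) \le 1$, these errors are bounded independently of the size of the triangle, so they can be folded into a fixed additive constant; the $\sqrt2$ presumably arises from bounding $\sqrt{a+b} \le \sqrt{a}+\sqrt{b}$-type estimates when separating the genuine midpoint term from the error term. Once the two inequalities are matched up, both implications follow by elementary manipulation, so apart from this error-tracking the proof is a direct translation between the two definitions.
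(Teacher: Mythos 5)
Your first implication is correct and follows the paper's route exactly: choosing $m(y,z)$ to be an $h$-midpoint of $[y,z]_h$ and specializing \rf{E:wrCAT0} to $t=1/2$ is the same computation as the paper's use of the Euclidean parallelogram law in a comparison triangle, and it gives $\delta=C/2$ on the nose.

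The converse direction, however, has a genuine gap, and it sits exactly where you locate the ``main subtlety.'' First, the very weak rCAT(0) condition quantifies over \emph{all} $h$-short triangles $T_h(x,y,z)$ with $h=H(x,y,z)$ and \emph{all} $h$-midpoints $u$ of $[y,z]_h$, so it is not enough to manufacture one particular short segment through $m(y,z)$ and verify the inequality there: you must prove $d(x,u)\le|\bx-\bu|+C$ for an arbitrarily given segment and midpoint. Second, your manufactured path need not even be $h$-short: applying the bolic inequality to the pair $(y,z)$ with far point $y$ (resp.\ $z$) gives only $d(y,m(y,z))\le d(y,z)/2+2\delta$ (resp.\ the same for $z$), so the concatenation through $m(y,z)$ can have length as large as $d(y,z)+4\delta$ plus the lengths' slack, and $4\delta$ is in general far bigger than $h\le 1$. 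Third, and most importantly, your claim that the discrepancy between $m(y,z)$ and a true $h$-midpoint $u$ is ``at most of order $h$ plus the comparison-point slack'' is unjustified, and is precisely the crux of the proof: in a bare length space two rough midpoints of the same pair can be arbitrarily far apart (think of two nearly antipodal points on a large circle). The paper controls this discrepancy by applying the bolic inequality a \emph{second} time, now to the pair $(y,z)$ with the given $h$-midpoint $u$ itself as the far point: since $d(y,u)\maxo d(z,u)\le d(y,z)/2+h$ while $h\,d(y,z)\le 1$ and $h\le 1$,
$$
2d(m(y,z),u)\;\le\;\sqrt{2d(y,u)^2+2d(z,u)^2-d(y,z)^2}+4\delta
\;\le\;\sqrt{4\bigl(d(y,z)/2+h\bigr)^2-d(y,z)^2}+4\delta
\;\le\;2\sqrt{2}+4\delta\,.
$$
Note this bound is of order $\delta$, not of order $h$; that is why the final constant is $C=4\delta+\sqrt{2}$ (your heuristic for the $\sqrt{2}$ is roughly right, but the $4\delta$ enters through this step, not through bookkeeping of $h$). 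Combining this with the bolic inequality for $(y,z)$ with far point $x$, i.e.\ $2d(x,m(y,z))\le 2|\bx-\bm|+4\delta$ by the parallelogram law, yields $d(x,u)\le|\bx-\bm|+4\delta+\sqrt{2}$ for every admissible $u$. Without this second application of bolicity, your ``absorb the discrepancy'' step has no justification, so the argument as proposed does not close.
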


\begin{proof}
Let $X$ be a very weak $C$-rCAT(0) space.
Let $x,y,z \in X$ and let $T_h(x,y,z)$
be some $h$-short triangle with comparison triangle $T(\bx,\by,\bz)$.
Let $m(y,z)$ be some $h$-midpoint of $[y,z]_h$. This defines a map
$m: X \times X \to X$. By definition, the Euclidean midpoint $\bm$ of $[\by,\bz]$
is a comparison point for $m(y,z)$. Using the comparison triangle property,
the Euclidean parallelogram law and the very weak $C$-rCAT(0) condition, we obtain
\begin{eqnarray*}
d(x,y)^2 + d(x,z)^2
& = &
|\bx-\by|^2 + |\bx-\bz|^2 \\
& = &
2\,|\bx-\bm|^2 + \frac{1}{2}|\by-\bz|^2 \\
& \geq &
2(d(x,m)-C)^2 + \frac{1}{2} d(y,z)^2.
\end{eqnarray*}
Thus $X$ is $C/2$-bolic.

Let now $X$ be a $\delta$-bolic length space with some $\delta>0$.
Let $T_h(x,y,z)$ be some $h$-short triangle and $T(\bx,\by,\bz)$ a
corresponding comparison triangle in the Euclidean plane. Furthermore,
let $m$ be some $h$-midpoint for $[y,z]_h$, that is, $m$ admits the
Euclidean midpoint $\bm$ of $[\by,\bz]$ as a comparison point.
By definition we thus obtain that $d(y,m)\leq d(y,z)/2 + h$ and
$d(m,z)\leq d(y,z)/2 + h$. By applying the bolic inequality for
$y,z,m \in X$ and $m(y,z) \in X$, and the fact that
$h=1/(1\maxo d(x,y)\maxo d(x,z)\maxo d(y,z))$, it follows that
\begin{eqnarray*}
2 d(m(y,z),m)
& \leq &
\sqrt{2 d(y,m)^2 + 2 d(m,z)^2 - d(y,z)^2} + 4\delta \\
& \leq &
\sqrt{4(d(y,z)/2 +h)^2 - d(y,z)^2} + 4\delta \\
& \leq &
2\sqrt{2} + 4\delta.
\end{eqnarray*}
Applying bolicity for $x,y,z \in X$ and $m(y,z) \in X$ now yields
\begin{eqnarray*}
2 d(x,m)
& \leq &
2 d(x,m(y,z)) + 2 d(m(y,z),m) \\
& \leq &
\sqrt{2 d(x,y)^2 + 2 d(x,z)^2 - d(y,z)^2} + 8\delta + 2\sqrt{2}.
\end{eqnarray*}
By using the comparison triangle property and the Euclidean parallelogram
equality we finally deduce
\begin{eqnarray*}
2(d(x,m)-4\delta - \sqrt{2})^2
& \leq &
d(x,y)^2 + d(x,z)^2 - \frac{1}{2} d(y,z)^2 \\
& = &
|\bx-\by|^2 + |\bx-\bz|^2 - \frac{1}{2}|\by-\bz|^2 \\
& = &
2\,|\bx-\bm|^2,
\end{eqnarray*}
which implies the very weak $C$-rCAT(0) inequality with $C=4\delta + \sqrt{2}$.
\end{proof}

The CAT($\k$) condition (for geodesic spaces $X$) is normally stated as the
$C=h=0$ variant of our rCAT($\k$) definition, but it can also be written as
a so-called $4$-point condition. We prove an rCAT($\k$) analogue of
this, but first we need a simple lemma.

\begin{lem}\label{L:r-uniq-geo} Let $x,y$ be a pair of points in the
Euclidean plane $\R^2$, with $l:=|x-y|>0$. Fixing $h>0$, and writing
$L:=l+h$, let $\g:[0,L]\to \R^2$ be a $h$-short segment from $x$ to $y$,
parametrized by arclength. Then there exists a map $\la:[0,L]\to[x,y]$ such
that $\la(0)=x$, $\la(L)=y$, and
\begin{align}
&&|\la(t)-x|&\le |\g(t)-x|\,, \qquad && 0\le t\le L\,, \label{E:laga1}\\
&&|\la(t)-y|&\le |\g(t)-y|\,, \qquad && 0\le t\le L\,, \label{E:laga2}\\
&&\d(t):=\dist(\g(t),\la(t)) &\le M := \frac{1}{2}\sqrt{2lh+h^2}\,,
  && 0\le t\le L\,. \label{E:r-uniq-geo}
\end{align}
In particular if $h\le 1/(1\maxo l)$, then $\d(t)\le \sqrt 3/2$ for all
$0\le t\le L$.
\end{lem}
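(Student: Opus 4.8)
The plan is to take $\lambda$ to be the nearest-point projection of $\gamma$ onto the segment $[x,y]$, and to read off all three estimates from the elementary geometry of an ellipse. First I would fix coordinates so that $x$ and $y$ lie on the horizontal axis. Since $\gamma$ is parametrized by arclength on $[0,L]$ with $L=l+h$, it is $1$-Lipschitz; hence $a(t):=|\gamma(t)-x|\le t$ and $b(t):=|\gamma(t)-y|\le L-t$, so that $a(t)+b(t)\le L$ for every $t$. Geometrically this says exactly that each $\gamma(t)$ lies in the closed elliptical region $E$ with foci $x,y$ and string length $L$, whose major axis is the segment $[x,y]$ and whose semi-minor axis equals $\tfrac12\sqrt{L^2-l^2}=\tfrac12\sqrt{2lh+h^2}=M$.

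Next I would define $\lambda(t)$ to be the unique nearest point of $[x,y]$ to $\gamma(t)$; in coordinates this is the foot of the perpendicular clamped to the segment, so $\lambda$ is in fact continuous, and the endpoint conditions $\lambda(0)=x$ and $\lambda(L)=y$ are immediate from $\gamma(0)=x$ and $\gamma(L)=y$ (the projection fixes points of the segment). The inequalities \eqref{E:laga1} and \eqref{E:laga2} then follow at once from the standard fact that the metric projection onto a convex set does not increase the distance to any point of that set: applying this with the two points $x,y\in[x,y]$ yields $|\lambda(t)-x|\le|\gamma(t)-x|$ and $|\lambda(t)-y|\le|\gamma(t)-y|$.

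The remaining and only substantial point is \eqref{E:r-uniq-geo}. By construction $\delta(t)=\dist(\gamma(t),[x,y])$, so the claim reduces to showing that no point of $E$ is farther than $M$ from its major-axis segment. I would prove this directly from $a(t)+b(t)\le L$, distinguishing two cases according to whether the perpendicular foot of $\gamma(t)$ falls inside $[x,y]$ or beyond an endpoint. In the interior case, with $\delta$ denoting the height, the sum of the two focal distances is at least its symmetric minimum $\sqrt{l^2+4\delta^2}$ (attained when the foot is the midpoint), so $\sqrt{l^2+4\delta^2}\le a+b\le L$ and hence $\delta\le\tfrac12\sqrt{L^2-l^2}=M$. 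In the endpoint case the distance equals $|\gamma(t)-x|$ (or $|\gamma(t)-y|$), which the constraint $a+b\le L$ bounds by $(L^2-l^2)/(2L)\le M$. I expect this distance estimate --- in particular, correctly handling the endpoint case and identifying the extremal configuration --- to be the main (if modest) obstacle, everything else being routine.

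Finally, the last assertion is pure arithmetic: when $h\le 1/(1\maxo l)$ we have $lh\le 1$ and $h^2\le 1$, whence $2lh+h^2\le 3$ and therefore $M\le\sqrt3/2$.
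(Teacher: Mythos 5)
Your proposal is correct and takes essentially the same approach as the paper: the paper's map $\la$, defined by clamping the first coordinate of $\g(t)$ to $[x_1,y_1]$, is exactly your nearest-point projection onto $[x,y]$, and the paper likewise proves \rf{E:r-uniq-geo} by confining $\g(t)$ to the ellipse with foci $x,y$ and string length $L$, whose semi-minor axis is $M$. The only difference is one of detail: the paper dismisses the extremal verification as routine, whereas you carry it out, correctly separating the interior case from the endpoint case where the perpendicular foot falls outside the segment.
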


\begin{proof}
The desired result is invariant under isometries of the plane, so we choose
the points $x=(x_1,0)$ and $y=(y_1,0)$ to be located on the first coordinate
axis with $x_1<y_1$. We also write $\g=(\g_1,\g_2)$ in Euclidean
coordinates. Now define $\la(t)=(\la_1(t),0)$, where $\la_1(t)=(\g_1(t)\maxo
x_1)\mino y_1$. It is clear that $\la(0)=x$, $\la(L)=y$, and that $\la$
satisfies \rf{E:laga1} and \rf{E:laga2}.

It is clear that to maximize $\d(t):=\dist(\g(t),\la(t))$, we should pick
$\g$ to be the concatenation of two straight line paths, one from $x$ to
$\g(t)$ of length $t$ and one from $\g(t)$ to $y$ of length $L-t$. But then
$\g(t)$ traces out an ellipse and it is routine to verify that $\d(t)\le
\d(L/2)=M$.
\end{proof}

\begin{defn}\label{D:rough subembed}%
Let $(X,d)$ be a metric space, $-\infty\le\k\le 0$, and $C\ge 0$. Suppose
$x_i\in X$ and $\bx_i\in M^2_\k$ for $0\le i\le 4$, with $x_0=x_4$ and
$\bx_0=\bx_4$. We say that $(\bx_1,\bx_2,\bx_3,\bx_4)$ is a {\it $C$-rough
subembedding} of $(x_1,x_2,x_3,x_4)$ in $M^2_\k$ if
\begin{align*}
d(x_i,x_{i-1}) &=   |\bx_i-\bx_{i-1}|\,, \quad 1\le i\le 4\,, \\
d(x_1,x_3)     &\le |\bx_1-\bx_3|\,,\qquad\mathrm{and}\\
d(x_2,x_4)     &\le |\bx_2-\bx_4|+C\,.
\end{align*}
\end{defn}

\begin{defn}\label{D:rough 4-pt}%
A metric space $(X,d)$ satisfies the {\it $(C,\k)$-rough 4-point condition},
where $C\ge 0$ and $-\infty\le\k\le 0$, if every 4-tuple in $X$ has a
$C$-rough subembedding in $M^2_\k$. When $\k=0$, we omit it from the
notation.
\end{defn}


\begin{thm}\label{T:rough 4-pt}
For a length space $(X,d)$ and $-\infty\le\k\le 0$, the following conditions
are equivalent, with quantitative dependence of parameters:
\begin{enumerate}
\item $X$ is weak $C$-\rcat(\k) for some $C>0$;
\item $X$ is weak $C$-\hrcat(\k) for some $C>0$;
\item $X$ satisfies the $(C',\k)$-rough 4-point condition for some
    $C'>0$.
\end{enumerate}
Moreover if $-\infty\le\k<0$, then these conditions are quantitatively
equivalent to $\d$-hyperbolicity and to \rcat(\k).
\end{thm}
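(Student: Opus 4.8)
\subsubsection*{Proof proposal}

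The plan is to prove a cycle of implications, tracking constants throughout so that all equivalences are quantitative. The implication (a)$\Rightarrow$(b) is immediate, since the standard short function is one admissible short function, so weak $C$-\rcat(\k) is a special case of weak $C$-\hrcat(\k). The two substantive directions are (b)$\Rightarrow$(c) and (c)$\Rightarrow$(a); together with (a)$\Rightarrow$(b) they give (a)$\Leftrightarrow$(b)$\Leftrightarrow$(c) for every $-\infty\le\k\le0$. For $-\infty\le\k<0$ I enlarge the cycle to (a)$\Rightarrow$(b)$\Rightarrow$(c)$\Rightarrow$($\d$-hyperbolic)$\Rightarrow$\rcat(\k)$\Rightarrow$(a), where the last step is trivial because the full rough comparison contains its weak form; this folds $\d$-hyperbolicity and full \rcat(\k) into the list of equivalent conditions.

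For (b)$\Rightarrow$(c), fix a $4$-tuple $(x_1,x_2,x_3,x_4)$ and work with the diagonal through $x_1,x_3$. Put $h:=H(x_1,x_2,x_3)\mino H(x_1,x_4,x_3)$ and fix a single $h$-short segment $[x_1,x_3]_h$; since an $h$-short triangle with this smaller $h$ is a fortiori short for each of the two original short-function values, the weak rough comparison applies to the triangles $T_h(x_1,x_2,x_3)$ and $T_h(x_1,x_4,x_3)$ built on this common side. Construct comparison triangles for these two triangles in $M^2_\k$ and glue them along the common side $\bx_1\bx_3$ of length $d(x_1,x_3)$, with $\bx_2,\bx_4$ on opposite sides; the four outer sides then reproduce the data exactly and $|\bx_1-\bx_3|=d(x_1,x_3)$. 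Since $\k\le0$, each comparison triangle has angle sum at most $\pi$. If the glued angle at $\bx_1$ or at $\bx_3$ is at least $\pi$, \rf{L:Alex} straightens the configuration at that vertex $v\in\{x_1,x_3\}$, so that $\bv$ lands on $[\bx_2,\bx_4]$; then $d(x_2,x_4)\le d(x_2,v)+d(v,x_4)=|\bx_2-\bx_4|$ while the same lemma gives $|\bx_1-\bx_3|\ge d(x_1,x_3)$, a valid subembedding. Otherwise both glued angles are below $\pi$, the quadrilateral is convex, and $(\bx_2,\bx_4)$ meets $[\bx_1,\bx_3]$ at an interior point $\bw$; choosing $u\in[x_1,x_3]_h$ whose comparison point lies within $h$ of $\bw$ and applying the weak rough comparison in each sub-triangle yields $d(x_2,u)\le|\bx_2-\bw|+C+h$ and $d(u,x_4)\le|\bw-\bx_4|+C+h$, whence the triangle inequality gives $d(x_2,x_4)\le|\bx_2-\bx_4|+2C+2h$. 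As we may assume $H$ is dominated by the standard short function, $h\le1$, so (c) holds with $C'=2C+2$.

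For (c)$\Rightarrow$(a), take a short triangle $T_h(x,y,z)$ for the standard short function, a point $u\in[y,z]_h$, and any comparison point $\hat u$ in the comparison triangle $T(\hat x,\hat y,\hat z)$; I must show $d(x,u)\le|\hat x-\hat u|+C$. Apply the rough $4$-point condition to $(y,x,z,u)$ to obtain a subembedding $(\by,\bx,\bz,\bu)$ with $d(x,u)\le|\bx-\bu|+C'$. The matched sides give $|\by-\bu|=d(y,u)$ and $|\bu-\bz|=d(u,z)$, and $u\in[y,z]_h$ forces $d(y,u)+d(u,z)\le d(y,z)+h$; combined with the triangle inequality in $M^2_\k$ this pins down $|\by-\bz|\in[d(y,z),d(y,z)+h]$, so the base of the subembedding triangle differs from that of the comparison triangle by at most $h$, and \rf{L:r-uniq-geo} bounds the displacement of $\bu$ off the line $\by\bz$ by $\tfrac{\sqrt3}2$. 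Passing from $\bu$ to its foot on $\by\bz$ costs $\le\tfrac{\sqrt3}2$; the remaining comparison is between two apex-to-basepoint distances in triangles that share the two sides $d(x,y),d(x,z)$ but whose bases and basepoints differ by $O(h)$. A short computation shows the difference of the \emph{squared} distances is $O\bigl(h(d(x,y)+d(x,z))\bigr)+O\bigl(hd(y,z)\bigr)$, which is $O(1)$ precisely because the standard short function puts all three side-lengths in the denominator of $h$; converting to a difference of distances keeps it $O(1)$. Hence $|\bx-\bu|\le|\hat x-\hat u|+O(1)$ and the weak comparison follows with $C=C'+O(1)$.

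Finally, for $-\infty\le\k<0$ the model $M^2_\k$ is itself $\d_\k$-hyperbolic (and $M^2_{-\infty}$ is a tree). For (c)$\Rightarrow$($\d$-hyperbolic) I apply the rough $4$-point condition to the three cyclic relabelings of a $4$-tuple and transfer, through each subembedding, the four-point inequality valid in $M^2_\k$: since the four sides are reproduced exactly and the two diagonals are controlled to within $C'$, each of the three pairing-sums in $X$ is at most the maximum of the other two plus $2\d_\k+C'$, giving $\d$-hyperbolicity with $\d=\d_\k+C'/2$. For ($\d$-hyperbolic)$\Rightarrow$\rcat(\k) I use \rf{L:Tripod}: two points on two sides of a short triangle always lie on short paths issuing from the common vertex, so the Tripod Lemma controls $d(u,v)$ in terms of the associated tripod; the comparison triangle in the hyperbolic model $M^2_\k$ is likewise thin, so $|\bu-\bv|$ is controlled by the same tripod, and the two estimates combine to bound $\bigl|d(u,v)-|\bu-\bv|\bigr|$ by a constant depending only on $\d$ and $\k$. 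The main obstacle is not any single implication but keeping every additive error uniform in the points: this is exactly what \rf{L:Alex} (preventing an unbounded non-convex defect in (b)$\Rightarrow$(c)), the bounded standard short function, and the displacement bound of \rf{L:r-uniq-geo} are there to guarantee, and the most delicate single computation is the comparison-triangle perturbation estimate in (c)$\Rightarrow$(a) when $\k=0$.
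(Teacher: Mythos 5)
Your proposal is correct and follows essentially the same route as the paper: the same implication cycle, the same gluing-of-two-comparison-triangles-plus-Alexandrov dichotomy for (b)$\Rightarrow$(c), the same use of a rough subembedding of $(y,u,z,x)$ together with \rf{L:r-uniq-geo} for (c)$\Rightarrow$(a) when $\k=0$, and the same detour through the $\d$-hyperbolicity of $M^2_\k$ and the Tripod Lemma to close the cycle when $\k<0$. The only local divergence is the finish of (c)$\Rightarrow$(a): where you run a perturbation estimate on squared distances using the two-sided bound $d(y,z)\le|\by-\bz|\le d(y,z)+h$ and the fact that the standard short function makes $h$ times any side length bounded, the paper instead passes to the true comparison triangle and reads \rf{E:plane CAT0} as a monotonicity statement (lengthening the base while fixing the other two sides can only decrease the apex-to-base distance), which needs only the one-sided bound $|\by-\bz|\ge d(y,z)$ and gives a cleaner constant.
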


\begin{proof}
Trivially (a) implies (b), with the same $C$. We next prove that (b) implies
(c) with $C'=2C$. Suppose $X$ is a weak $C$-\hrcat(\k) space with short
function $H$. Let $(x_1,x_2,x_3,x_4)$ be a 4-tuple in $X$, and let
$h:=H(x_1,x_3,x_2)\mino H(x_1,x_3,x_4)$. Choose $h$-short triangles
$T_2:=T_h(x_1,x_3,x_2)$ and $T_4:=T_h(x_1,x_3,x_4)$, and comparison
triangles $\bT_1:=T(\bx_1,\bx_3,\bx_2)$ and $\bT_2:=T(\bx_1,\bx_3,\bx_4)$,
such that $\bT_2$ and $\bT_4$ have a common side $[\bx_1,\bx_3]$, and that
$\bx_2$ and $\bx_4$ lie on opposite sides of the line through $\bx_1$ and
$\bx_3$. Let $\bz$ be the point of intersection of $[\bx_2,\bx_4]$ and the
line through $\bx_1$ and $\bx_3$.

Suppose first that $\bz\in[\bx_1,\bx_3]$; this is always the case if
$\k=-\infty$ but it may fail for finite $\k$. Picking a $h$-short segment
$[x_1,x_3]_h$, let $z\in[x_1,x_3]_h$ be such that $d(x_1,z)=|\bx_1-\bz|$;
note that for $i=2,4$, the points $\bz,\by_i$ are comparison points in the
triangle $\bT_i$ for $z,y_i$, respectively. By the triangle and weak rough
CAT($\k$) inequalities,
$$
d(x_2,x_4) \le d(x_2,z)+d(z,x_4) \le |\bx_2-\bz|+|\bz-\bx_4| + 2C =
  |\bx_2-\bx_4| + 2C\,.
$$
Note that $d(x_1,x_3)=|\bx_1-\bx_3|$. Thus $(\bx_1,\bx_2,\bx_3,\bx_4)$ is a
$C'$-rough subembedding in $M^2_\k$ of $(x_1,x_2,x_3,x_4)$, with $C'=2C$.

Alternatively suppose that the segments $[\bx_1,\bx_3]$ and $[\bx_2,\bx_4]$
do not intersect (and so $\k\in\R$). Let $Q$ be the quadrilateral consisting
of the union of the four geodesic segments $[\bx_1,\bx_2]$, $[\bx_2,\bx_3]$,
$[\bx_3,\bx_4]$, and $[\bx_4,\bx_1]$. Then $M^2_\k\setminus Q$ has two
components: we call the one containing $(\bx_1,\bx_3)$ the {\it inner
component}, and we define the {\it inner and outer angles} at the vertices
in $Q$ in the natural way; note that the inner angle at $\bx_i$, $i=1,3$, is
the sum of the angles at the same point in the triangles $\bT_2$ and
$\bT_4$. If both inner angles were less than $\pi$, then it would follow by
continuity that there exists a point $\bu\in[\bx_1,\bx_3]$ such that the
inner angle at $u$ for the quadrilateral with vertices
$\bx_1,\bx_2,\bu,\bx_4$ is $\pi$. But then it follows from Alexandrov's
Lemma (\rf{L:Alex}) that $\bz=\bu$, contradicting the fact that
$\bz\notin[\bx_1,\bx_3]$.

Thus we may assume without loss of generality that the inner angle at
$\bx_1$ in $Q$ is at least $\pi$. It follows that there exists a geodesic
triangle in $M^2_\k$ with vertices $\tx_2=\bx_2$, $\tx_3=\bx_3$, and
$\tx_0\equiv\tx_4$, together with a point $\tx_1\in [\tx_2,\tx_4]$, such
that $|\tx_i-\tx_{i-1}|=d(x_i,x_{i-1})$, $1\le i\le 4$, and
$$
|\tx_2-\tx_4| = |\tx_2-\tx_1| + |\tx_1-\tx_4| = d(x_2,x_1) + d(x_1,x_4) \ge
  d(x_2,x_4)\,.
$$
By Alexandrov's lemma, $|\tx_1-\tx_3|\ge|\bx_1-\bx_3|=d(x_1,x_3)$.  Thus
$(\tx_1,\tx_2,\tx_3,\tx_4)$ is a $0$-rough subembedding in $M^2_\k$ of
$(x_1,x_2,x_3,x_4)$. Putting this case together with the first case, we see
that weak $C$-\hrcat(\k) implies the $(2C,\k)$-rough 4-point condition.

We prove that (c) implies (a) first for $\k=0$. Suppose $x,y,z$ lie in a
length space $X$ that satisfies the $C'$-rough 4-point condition, and write
$h:=H(x,y,z)$, where $H$ is the standard short function. Suppose also that
$T=T_h(x,y,z)$ is a $h$-short geodesic triangle, $u\in[y,z]_h$, and $v:=x$.

Let $(\by,\bu,\bz,\bx)$ be a $C'$-rough subembedding for
$(x_1,x_2,x_3,x_4)=(y,u,z,x)$. Apply \rf{L:r-uniq-geo}, with $\g$ being the
piecewise linear path from $\by$ to $\g(t):=\bu$ to $\bz$, to get an
associated point $\la(t):=\bu'$ on the line segment $[\by,\bz]$ such that
$|\bu-\bu'|\le \sqrt 3/2$. Thus $d(u,x)\le |\bu'-\bx|+C_1$, where
$C_1:=C'+\sqrt 3/2$.

The Euclidean triangle $T(\bx,\by,\bz)$ satisfies $|\bx-\by|=d(x,y)$ and
$|\bx-\bz|=d(y,z)$, but it is not necessarily a comparison triangle for $T$
because we know only that $|\bz-\by|\ge d(z,y)$. However if we take
$T':=T(x',y',z')$ to be a comparison triangle in $M^2_0$ for $T$, and define
$u'\in[y',z']$ via the equation
\begin{equation}\label{E:u'bu'}
\frac{|u'-y'|}{|z'-y'|} = \frac{|\bu'-\by|}{|\bz-\by|}\,,
\end{equation} then it follows
from \rf{E:plane CAT0} that $|\bu'-\bx|\le |u'-x'|$. Moreover by combining
the subembedding properties, \rf{E:laga1}, \rf{E:laga2}, \rf{E:u'bu'}, and
the fact that $|y'-z'|\le|\by-\bz|$, we see that
\begin{align*}
|u'-y'|&\le |\bu'-\by|\le |\bu-\by|=|u-y|\le t\,, \\
|u'-z'|&\le |\bu'-\bz|\le |\bu-\bz|=|u-z|\le L-t\,,
\end{align*}
and so $u'$ is a comparison point for $u$. Since $d(u,x)\le |\bu'-\bx|+C_1$,
we have shown the $C_1$-rCAT($0$) condition for this choice of data,
comparison triangle, and this particular choice of comparison point $u'$.

A general comparison point $u''$ for $u$ on the side $[y',z']$ of $T'$ must
satisfy $|u''-u'|\le h\le 1$, and so it follows that the $C'$-rough 4-point
condition implies a weak $C$-rCAT($0$) condition for $C=C'+1+\sqrt 3/2$.

It remains to prove that when $-\infty\le\k<0$, (c) implies (a), and both
are equivalent to Gromov hyperbolicity. The $\d$-hyperbolicity condition can
be written in the form
$$ d(x,z)+d(y,w) \le (d(x,y)+d(z,w))\maxo(d(x,w)+d(y,z))+ 2\d\,, $$
and this condition holds (with $\d=\log 3/\sqrt{-\k}$) for all points
$x,y,z,w\in M^2_\k$; see \cite[Theorem~1.5.1]{CDP}. If instead $x,y,z,w$ lie
in a space $X$ that satisfies the $(C',\k)$-rough 4-point condition, then
this condition and the $\d$-hyperbolicity of $M^2_\k$ immediately imply the
$(\d+C')$-hyperbolicity of $X$.

Taking $h\le 1$ in \rf{L:Tripod}, it is readily deduced that every
$\d$-hyperbolic space is $C$-rCAT($-\infty$) for $C=4\d+2$, and so a
fortiori $C$-rCAT($\k$) for every $\k$. We note in particular that the
$(C',\k)$-rough 4-point condition implies the $C$-rCAT($\k$) condition for
$C=4C'+2+4\log 3/\sqrt{-\k}$.
\end{proof}

It follows rather easily from \rf{T:rough 4-pt} that CAT($\k$) spaces are
rCAT($\k$) with roughness constant $C=C(\k)$ when $k<0$; alternatively, this
follows from the well-known {\it geodesic stability} of Gromov hyperbolic
spaces (see for instance \cite[Part III.H]{BH} or \cite{Va}). The fact that
every CAT($\k$) space is an rCAT($\k$) space is also true when $\k=0$: see
\rf{C:CAT0 is rCAT0}.

\begin{rem}
The CAT(0) analogue of \rf{T:rough 4-pt} in \cite[II.3.9]{BH} assumes that
$X$ is a complete space with {\it approximate midpoints}. Such an assumption
readily implies that $X$ is a length space, so we use this latter assumption
in our theorem and in \rf{C:rough 4-pt} below, since we do not wish to
restrict the theory of rCAT(0) spaces to complete spaces.
\end{rem}

It is shown in Bridson and Haefliger \cite[II.3.10]{BH} that CAT(0) is
preserved by various limit operations, including pointed Gromov-Hausdorff
limits and ultralimits; in particular both generalized tangent space and
asymptotic cones of CAT(0) spaces are CAT(0) spaces (see \cite{BH} for the
definition of all of these concepts). The trick is to use the $4$-point
condition and the rather weak limit concept of a $4$-point limit.
Essentially the same arguments, with the $4$-point condition replaced by our
rough $4$-point condition, give us similar results for rCAT(0) spaces which
we now state. We omit the proofs since they are obtained by routine
adjustments to the proofs of II.3.9 and II.3.10 in \cite{BH}. For
completeness, we begin with a definition of $4$-point limits.

\begin{defn}
A metric space $(X,d)$ is a $4$-point limit of a sequence of metric spaces
$(X_n,d_n)$ if for every $x_1,x_2,x_3,x_4\in X$, and $\e>0$, there exist
infinitely many integers $n$ and points $x_{i,n}\in X_n$, $1\le i\le 4$,
such that $|d(x_i,x_j)-d_n(x_{i,n},x_{j,n})|<\e$ for $1\le i,j\le 4$.
\end{defn}

\begin{cor}\label{C:rough 4-pt}
Suppose the length space $(X,d)$ is a $4$-point limit of the weak
$C_n$-\hrcat(0) spaces $(X_n,d_n)$. If $C_n\le C$ for all $n$, then $(X,d)$
is a weak $C'$-rCAT(0) space for some $C'$ dependent only on $C$. If $C_n\to
0$ and $X$ is complete, then $(X,d)$ is a CAT(0) space.
\end{cor}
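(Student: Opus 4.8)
The plan is to recast both conclusions in terms of the rough $4$-point condition, which passes to $4$-point limits by a routine compactness argument. By the implication (b)$\Rightarrow$(c) of \rf{T:rough 4-pt}, each $X_n$, being weak $C_n$-\hrcat(0), satisfies the $(2C_n,0)$-rough $4$-point condition; since $C_n\le C$ it satisfies the $(2C,0)$-rough $4$-point condition, so every $4$-tuple in $X_n$ has a $2C$-rough subembedding in $M^2_0$. I would show that $X$ inherits the $(2C,0)$-rough $4$-point condition, after which the implication (c)$\Rightarrow$(a) of \rf{T:rough 4-pt} yields that $X$ is weak $C'$-\rcat(0) with $C'$ depending only on $C$. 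Only the length space hypothesis is used for this first assertion.

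Fix a $4$-tuple $(x_1,x_2,x_3,x_4)$ in $X$ and a sequence $\e_k\downarrow 0$. For each $k$ the $4$-point limit hypothesis supplies an index $n_k$ and points $x_{i,n_k}\in X_{n_k}$ with $|d(x_i,x_j)-d_{n_k}(x_{i,n_k},x_{j,n_k})|<\e_k$ for all $i,j$; applying the $(2C,0)$-rough $4$-point condition in $X_{n_k}$ gives a $2C$-rough subembedding $(\bx_{1,k},\bx_{2,k},\bx_{3,k},\bx_{4,k})$ in $M^2_0$ of this approximating tuple. Since the defining relations in \rf{D:rough subembed} are invariant under isometries of the plane, I would normalize so that $\bx_{1,k}$ is the origin and $\bx_{2,k}$ lies on the positive real axis. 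The four consecutive distances $|\bx_{i,k}-\bx_{i-1,k}|=d_{n_k}(x_{i,n_k},x_{i-1,n_k})$ lie within $\e_k$ of $d(x_i,x_{i-1})$ and so are uniformly bounded; by the triangle inequality all four points then lie in a fixed closed disc about the origin. Passing to a subsequence in $k$, Bolzano--Weierstrass gives convergence $\bx_{i,k}\to\bx_i\in M^2_0$ for each $i$.

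The limiting configuration is the desired subembedding. The consecutive equalities pass to the limit exactly, $|\bx_i-\bx_{i-1}|=\lim_k d_{n_k}(x_{i,n_k},x_{i-1,n_k})=d(x_i,x_{i-1})$, because $\e_k\to0$; the inequality $|\bx_{1,k}-\bx_{3,k}|\ge d_{n_k}(x_{1,n_k},x_{3,n_k})$ yields $|\bx_1-\bx_3|\ge d(x_1,x_3)$, and $|\bx_{2,k}-\bx_{4,k}|\ge d_{n_k}(x_{2,n_k},x_{4,n_k})-2C$ yields $|\bx_2-\bx_4|\ge d(x_2,x_4)-2C$. Hence $(\bx_1,\bx_2,\bx_3,\bx_4)$ is a $2C$-rough subembedding of $(x_1,x_2,x_3,x_4)$, so $X$ satisfies the $(2C,0)$-rough $4$-point condition and the first assertion follows as above.

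For the second assertion, assume $C_n\to0$ and $X$ complete. For each $\e_k$, all but finitely many of the infinitely many admissible indices $n$ satisfy $C_n<\e_k$, so I can choose $n_k$ with both approximation error and $C_{n_k}$ less than $\e_k$. The argument above then produces subembeddings of roughness $2C_{n_k}\to0$, so in the limit $|\bx_2-\bx_4|\ge d(x_2,x_4)$ and $(\bx_1,\bx_2,\bx_3,\bx_4)$ is a genuine subembedding; thus $X$ satisfies the ($0$-rough) $4$-point condition. Since a complete length space has approximate midpoints and is therefore geodesic, the $4$-point characterization of CAT(0) spaces gives that $X$ is CAT(0). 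The main obstacle is the compactness step in the second paragraph: one must normalize the planar subembeddings by plane isometries so that they occupy a common compact set, and then confirm that both the exact edge equalities and the two one-sided distance inequalities survive the passage to the limit. The remaining ingredients are the two implications of \rf{T:rough 4-pt} and the elementary fact that complete length spaces are geodesic.
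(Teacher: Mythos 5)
Your route is exactly the one the paper intends: the paper omits the proof, saying only that it is obtained by adjusting the arguments of \cite[II.3.9, II.3.10]{BH} with the $4$-point condition replaced by the rough $4$-point condition, and that is precisely what you do. Your treatment of the first assertion is correct: (b)$\Rightarrow$(c) of \rf{T:rough 4-pt} applied in each $X_n$, normalization by plane isometries plus Bolzano--Weierstrass to pass the $(2C,0)$-rough $4$-point condition to the $4$-point limit, then (c)$\Rightarrow$(a) of \rf{T:rough 4-pt}, which applies because $X$ is assumed to be a length space and yields the explicit constant $C'=2C+1+\sqrt{3}/2$ depending only on $C$.

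The second assertion, however, ends with a false step: ``a complete length space has approximate midpoints and is therefore geodesic.'' Completeness plus approximate midpoints does not imply the existence of geodesics; for complete spaces it is equivalent to being a length space, and complete length spaces need not be geodesic. For instance, join two points $x,y$ by countably many disjoint arcs, the $n$-th of length $1+1/n$: with the glued length metric this is a complete length space with $d(x,y)=1$, but no geodesic joins $x$ to $y$. The repair is exactly the content of \cite[II.3.9]{BH}, and it uses the ($0$-rough) $4$-point condition you have already established in the limit: if $m,m'$ are $\e$-approximate midpoints of $x,y$, then a subembedding of the $4$-tuple $(x,m,y,m')$ places $\bm$ and $\bm'$ within $\d(\e)$ of the midpoint of $[\bx,\by]$ by elementary planar geometry, since $|\bx-\by|\ge d(x,y)$ while all four sides have length at most $d(x,y)/2+\e$; hence $d(m,m')\le|\bm-\bm'|\le 2\d(\e)$ with $\d(\e)\to0$ as $\e\to 0$. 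Completeness then upgrades approximate midpoints to exact midpoints, exact midpoints plus completeness give geodesics, and the $4$-point characterization of CAT(0) for geodesic spaces (quoted in the paper's preliminaries) finishes the proof. So you should either carry out this midpoint argument or simply invoke \cite[II.3.9]{BH}, which is stated precisely for complete spaces with approximate midpoints; what you cannot do is pass from ``complete length space'' to ``geodesic'' without using the $4$-point condition.
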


\begin{cor}\label{C:limit}
Suppose $(X,d)$ is a length space and $(X_n,d_n)$ form a sequence of
$C$-rCAT(0) spaces. Then there exists a constant $C'$ dependent only on $C$
such that:
\begin{enumerate}
\item If $(X,d)$ is a (pointed or unpointed) Gromov-Hausdorff limit of
    $(X_n,d_n)$ then $(X,d)$ is a $C'$-rCAT(0) space.
\item If $(X,d)$ is an ultralimit of $(X_n,d_n)$, then $(X,d)$ is a
    $C'$-rCAT(0) space.
\item If $X$ is rCAT(0), then any asymptotic cone $\text{Cone}_\omega
    X:=\lim_\omega (X,d/n)$ is a CAT(0) space for every non-principal
    ultrafilter $\omega$.
\end{enumerate}
\end{cor}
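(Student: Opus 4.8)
The plan is to derive (c) from \rf{C:rough 4-pt} by rescaling, and to prove (a) and (b) by a single direct argument that verifies an \hrcat(0) condition on the limit. For the rescaling, observe that if $X$ is $C$-\rcat(0) then each space $(X,d/n)$ is $(C/n)$-\hrcat(0): replacing $d$ by $d/n$ divides every length, distance, and hence every short-triangle excess by $n$, while the Euclidean comparison data simply rescale, so the short function $H/n$ (with $H$ standard for $(X,d)$) witnesses the $(C/n)$-rough CAT($0$) condition. An ultralimit is complete and, for a non-principal ultrafilter, is a $4$-point limit; thus $\text{Cone}_\omega X=\lim_\omega(X,d/n)$ is a complete $4$-point limit of spaces that are weak $(C/n)$-\hrcat(0) with roughness constant tending to $0$, and the second assertion of \rf{C:rough 4-pt} gives that the cone is CAT($0$).

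For (a) and (b) I would first reduce the desired conclusion to an \hrcat(0) statement: by \rf{T:various rcat0}(b), quantitatively \rf{C:infinitesimal}, it suffices to exhibit \emph{some} short function on $X$ for which the $C'$-rough CAT($0$) condition holds. This reduction is the crux, because it lets me use a short function strictly smaller than the standard one, say $H':=\tfrac12 H^{\mathrm{std}}_X$, thereby creating slack in the excess estimates below. Since the ultralimit construction (respectively Gromov--Hausdorff approximation) provides, for any finite set of points in $X$, representatives in $X_n$ with pairwise $d_n$-distances converging to the corresponding distances in $X$, I may choose sequences $x_n,y_n,z_n,u_n,v_n$ realizing the vertices $x,y,z$ of an $H'$-short triangle and any two chosen points $u,v$ on different sides.

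The main construction is then to build, for all large $n$ (respectively $\omega$-almost all $n$), a short triangle in $X_n$ on $x_n,y_n,z_n$ that passes through $u_n$ and $v_n$: I concatenate $\e_n$-short segments $x_n\to u_n\to y_n$ and $y_n\to v_n\to z_n$ with $\e_n\to0$, and join $z_n$ to $x_n$ by any $\e_n$-short segment. The excess of the side through $u_n$ converges to $d(x,u)+d(u,y)-d(x,y)$, which is at most $H'(x,y,z)=\tfrac12 H^{\mathrm{std}}_X(x,y,z)$ because $u$ lies on an $H'$-short segment; as $H^{\mathrm{std}}_{X_n}(x_n,y_n,z_n)$ converges to $H^{\mathrm{std}}_X(x,y,z)$, this excess stays below the standard threshold of $X_n$ for all large $n$, so the triangle is admissible for the $C$-\rcat(0) condition of $X_n$. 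That condition gives $d_n(u_n,v_n)\le|\bar{u}_n-\bar{v}_n|+C$ for comparison points in $M^2_0$, and letting $n\to\infty$ (respectively taking the $\omega$-limit) produces $d(u,v)\le|\bu-\bv|+C'$ in $X$.

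The step I expect to be the genuine obstacle is this excess matching, compounded by the fact that comparison points are governed by arc length rather than by endpoint distances alone. The slack gained from using $\tfrac12 H^{\mathrm{std}}_X$ resolves the excess issue. For the comparison points, since each constructed piece is $\e_n$-short, the arc length from $x_n$ to $u_n$ differs from $d_n(x_n,u_n)$ by $o(1)$, so both the $\omega$-limit of $\bar{u}_n$ and the comparison point $\bu$ for $u$ in $X$ lie within $O(H'(x,y,z))=O(1)$ of the Euclidean point at distance $d(x,u)$ from $\bx$ along $[\bx,\by]$; hence $|\bu-\bv|$ and $\lim|\bar{u}_n-\bar{v}_n|$ differ by a bounded amount that is absorbed into $C'$. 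The residual configurations, such as $u,v$ on two sides meeting at a common vertex or one of them being a vertex, are handled by the same construction, completing the verification that $X$ is $C'$-\hrcat(0), and hence $C'$-\rcat(0).
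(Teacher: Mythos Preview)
Your argument for (c) matches the paper's intended route exactly: rescale to get roughness $C/n\to0$, observe that an ultralimit over a non-principal ultrafilter is a complete $4$-point limit, and invoke the second clause of \rf{C:rough 4-pt}.

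For (a) and (b), however, you take a genuinely different path. The paper does not give a proof but explicitly says the result follows by ``essentially the same arguments'' as in \cite[II.3.9--10]{BH}, replacing the $4$-point condition by the rough $4$-point condition. That route is: each $X_n$ satisfies the $(2C,0)$-rough $4$-point condition by \rf{T:rough 4-pt}; this condition involves only six pairwise distances among four points, so it passes automatically to any $4$-point limit (hence to Gromov--Hausdorff limits and ultralimits); then \rf{T:rough 4-pt} converts back. This is short and robust, but it lands in \emph{weak} \rcat(0), since the rough $4$-point condition is equivalent only to the weak version.

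Your direct construction instead transplants the short triangle itself into $X_n$ by concatenating $\e_n$-short arcs through the approximants $u_n,v_n$, using the slack from $H'=\tfrac12 H^{\mathrm{std}}$ to keep the resulting excess below the standard threshold in $X_n$. This is more work---you must track arclengths versus endpoint distances when matching comparison points, and handle the various side configurations---but it yields the \emph{full} \rcat(0) inequality in the limit, not just the weak one. Since the paper leaves open whether weak \rcat(0) implies \rcat(0), your approach actually delivers the conclusion of \rf{C:limit} as literally stated, whereas the paper's sketched $4$-point route a priori gives only the weak version (cf.\ \rf{C:rough 4-pt}). The bounded discrepancy between $\bu$ and the limit of $\bu_n$ that you absorb into $C'$ is at most $H'(x,y,z)\le 1$, exactly as in the passage from specific to general comparison points at the end of the proof of \rf{T:rough 4-pt}, so that step is sound.
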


In each of the cases above, the existence of an approximate midpoint for
arbitrary $x,y\in X$ (meaning a point $m$ such that $d(x,m)\maxo d(y,m)\le
\e+d(x,y)/2$ for fixed but arbitrary $\e>0$) follows easily from the
hypotheses, and so $(X,d)$ is a length space if it is complete.


\section{Rough CAT(0) and roughly unique geodesics} \label{S:rcat0}

In this section, we explore the rough unique geodesic property of (weak)
rough \hcat(0,*) spaces. Recall that CAT(0) spaces are uniquely geodesic.
The rCAT(0) condition for a $h$-short triangle $T(x,y,y)$ readily gives the
following rough version of this.

\begin{obs}Let $x,y$ be a pair of points in a $C$-rCAT($0$) space $(X,d)$,
and let $h:=1/(1\maxo d(x,y))$. Let $\g_i:[0,L_i]\to X$, $i=1,2$ be a pair
of $h$-short segments from $x$ to $y$, parametrized by arclength, with
$L_1\le L_2$. Then $d(\g_1(t),\g_2(t))\le C$, $0\le t\le L_1$.
\end{obs}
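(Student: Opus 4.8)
The plan is to deduce the estimate from the defining $C$-rCAT(0) inequality applied to a single \emph{degenerate} $h$-short triangle, as the remark preceding the statement suggests. First I would form the $h$-short triangle $T:=T_h(x,y,y)$ whose three sides are $[x,y]_h:=\g_1$, the constant geodesic $[y,y]_h$ sitting at $y$, and $[y,x]_h:=\g_2$ run backwards. The point of these vertices is that the standard short function returns $H(x,y,y)=1/(1\maxo d(x,y)\maxo d(x,y)\maxo 0)=1/(1\maxo d(x,y))$, which is exactly the value $h$ in the statement; hence $T$ is precisely the kind of triangle to which the $C$-rCAT(0) condition applies. Since $d(y,y)=0$, a comparison triangle $T(\bx,\by,\bz)$ in $M^2_0=\R^2$ collapses: $\by=\bz$, and $[\bx,\by]=[\bx,\bz]$ is a single segment of length $l:=d(x,y)$.

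Next, fixing $t\in[0,L_1]$, I would set $u:=\g_1(t)$ on the side $[x,y]_h$ and $v:=\g_2(t)$ on the side $[y,x]_h$; these lie on different sides of $T$, so the inequality $d(u,v)\le|\bu-\bv|+C$ is available for any choice of comparison points $\bu,\bv$. The crucial step, and the one that yields the sharp constant $C$ rather than something like $C+h$, is to exhibit a \emph{single} point $\bw$ of the collapsed comparison segment that is a valid comparison point for both $u$ and $v$ at once. I would take $\bw$ at distance $\min(t,l)$ from $\bx$ and verify the comparison-point inequalities of \rf{D:comp-tri}. Since $\g_1,\g_2$ are unit speed, $\len([x,u]_h)=t$ and $\len([u,y]_h)=L_1-t$, while along $[y,x]_h$ the point $v$ sits at arclength $L_2-t$ from $y$ and $t$ from $x$; the four required inequalities are thus $|\bx-\bw|\le t$, $|\bw-\by|\le L_1-t$, $|\by-\bw|\le L_2-t$, and $|\bw-\bx|\le t$, each of which reduces, on splitting into the cases $t\le l$ and $t>l$, to one of the trivially true facts $l\le L_1$, $l\le L_2$, $t\le L_1$, $t\le L_2$ (recall $l\le L_1\le L_2$ and $t\le L_1$). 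Taking $\bu=\bv=\bw$ then gives $|\bu-\bv|=0$, so the $C$-rCAT(0) condition delivers $d(\g_1(t),\g_2(t))=d(u,v)\le C$.

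The only step I expect to present any real difficulty is this common-comparison-point verification. Because \rf{D:comp-tri} constrains comparison points through two inequalities only, they are highly non-unique, and the entire force of the statement rests on the admissible ranges for $\bu$ and for $\bv$ overlapping in a common point of the collapsed segment. The main bookkeeping hazard is keeping the orientation of $\g_2$ along the reversed side $[y,x]_h$ straight while reading off the relevant arclengths; once that is handled, the four inequalities are immediate and no further estimates are needed.
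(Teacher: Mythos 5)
Your proposal is correct and follows exactly the route the paper intends: it applies the $C$-rough CAT(0) condition to the degenerate $h$-short triangle $T_h(x,y,y)$ (for which the standard short function indeed returns $h=1/(1\maxo d(x,y))$), with the collapsed comparison segment. Your verification that the point $\bw$ at distance $t\mino d(x,y)$ from $\bx$ is simultaneously a valid comparison point for $\g_1(t)$ and $\g_2(t)$ is the one detail the paper leaves implicit, and your case check ($t\le l$ versus $t>l$, using $l\le L_1\le L_2$ and $t\le L_1$) settles it, yielding the clean constant $C$.
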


The following theorem improves the above observation.

\begin{thm}\label{T:r-uniq-geo} Let $x,y$ be a pair of points in a
weak $C$-\hrcat(0) space $(X,d)$, with $L:=d(x,y)$. For $i=1,2$, let $h_i>0$
and let $\g_i:[0,L+h_i]\to X$ be a $h_i$-short segment from $x$ to $y$,
parametrized by arclength; we assume that $h_1\le h_2$. Then
$$
d(\g_1(t),\g_2(t)) \le 2C + h_2 + \frac{\sqrt{2Lh_1+h_1^2}}{2} +
  \frac{\sqrt{2Lh_2+h_2^2}}{2}\,,\qquad 0\le t\le L+h_1\,,
$$
In particular, if $h_2\le 1/(1\maxo L)$, then
$$ d(\g_1(t),\g_2(t)) \le 2C + 1 + \sqrt{3}\,,\qquad 0\le t\le L+h_1\,. $$
\end{thm}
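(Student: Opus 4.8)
The plan is to collapse the whole problem into a single planar four-point picture furnished by the rough $4$-point condition, and then to read the bound off from two applications of the planar Lemma~\rf{L:r-uniq-geo}. Fix $t\in[0,L+h_1]$ and write $u:=\g_1(t)$, $v:=\g_2(t)$ (both are defined, since $L+h_1\le L+h_2$); we may assume $L>0$, the case $L=0$ being handled at once by $d(u,v)\le d(u,x)+d(x,v)\le 2t\le 2h_1$, which already beats the stated bound. Because $X$ is weak $C$-\hrcat(0), \rf{T:rough 4-pt} tells us that $X$ satisfies the $(2C,0)$-rough $4$-point condition, so the $4$-tuple $(x,u,y,v)$ admits a $2C$-rough subembedding $(\bx,\bu,\by,\bv)$ in $M^2_0=\R^2$. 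By \rf{D:rough subembed} this preserves the four consecutive distances, forces $L=d(x,y)\le|\bx-\by|=:D$, and yields $d(u,v)\le|\bu-\bv|+2C$. It thus remains to show $|\bu-\bv|\le M_1+M_2+h_2$, where $M_i:=\tfrac12\sqrt{2Lh_i+h_i^2}$.

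First I would record the side estimates coming from $\g_i$ being $h_i$-short and arclength-parametrized: its subpaths give $d(x,u)\in[t-h_1,t]$, $d(u,y)\in[L-t,L+h_1-t]$, and likewise $d(x,v)\in[t-h_2,t]$, $d(v,y)\in[L-t,L+h_2-t]$. Consequently the length of the broken planar path from $\bx$ to $\by$ through $\bu$ is $S_1:=|\bx-\bu|+|\bu-\by|\le L+h_1$, and the triangle inequality together with the subembedding gives $L\le D\le S_1$; similarly $S_2:=|\bx-\bv|+|\bv-\by|\le L+h_2$ with $L\le D\le S_2$. Hence this broken path is itself a planar $(S_1-D)$-short segment from $\bx$ to $\by$, and applying Lemma~\rf{L:r-uniq-geo} to it at its breakpoint produces a point $\bu'\in[\bx,\by]$ with $|\bu-\bu'|\le\tfrac12\sqrt{S_1^2-D^2}$ and, by \rf{E:laga1} and \rf{E:laga2}, $|\bx-\bu'|\le|\bx-\bu|$ and $|\bu'-\by|\le|\bu-\by|$. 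The key point is that $D\ge L$ makes the excess only help: $S_1^2-D^2\le(L+h_1)^2-L^2=2Lh_1+h_1^2$, so $|\bu-\bu'|\le M_1$. The identical argument applied through $\bv$ gives $\bv'\in[\bx,\by]$ with $|\bv-\bv'|\le M_2$.

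Finally I would bound the tangential displacement of $\bu'$ and $\bv'$ along $[\bx,\by]$. Writing $\rho_1:=|\bx-\bu'|$ and $\rho_2:=|\bx-\bv'|$, the monotonicity inequalities above combine with the side estimates to give $\rho_1\le d(x,u)\le t$ and $\rho_1=D-|\bu'-\by|\ge D-d(u,y)\ge L-(L+h_1-t)=t-h_1$, and likewise $\rho_2\in[t-h_2,t]$; since $h_1\le h_2$, both $\rho_1,\rho_2$ lie in $[t-h_2,t]\cap[0,\infty)$, so $|\bu'-\bv'|=|\rho_1-\rho_2|\le h_2$. Then $|\bu-\bv|\le|\bu-\bu'|+|\bu'-\bv'|+|\bv'-\bv|\le M_1+M_2+h_2$, and feeding this into $d(u,v)\le|\bu-\bv|+2C$ gives exactly the stated inequality; the "in particular" assertion follows from the corresponding special case of Lemma~\rf{L:r-uniq-geo}, since $h_2\le1/(1\maxo L)$ forces each $M_i\le\sqrt3/2$ and $h_2\le1$. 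The one genuinely delicate step is the middle paragraph: recognizing that the broken planar path through $\bu$ (resp.\ $\bv$) is an $h_i$-short planar segment and that replacing the true base length $L$ by the possibly larger $D$ \emph{strictly helps}, which is precisely what makes $M_1$ and $M_2$ come out with $L$ rather than $D$ in the radicand.
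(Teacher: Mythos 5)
Your proof is correct, and it takes a genuinely different route from the paper's. The paper argues directly from the definition of weak \hrcat(0): it introduces an auxiliary $h$-short path $\g_3$ from $x$ to $y$ with $h$ controlled by the short function, builds two $h$-short triangles $T_h(x,y,\g_i(t))$ sharing $\g_3$ as a side, applies the weak comparison inequality once to each triangle (vertex $\g_i(t)$ against a comparison point on $\g_3$, with \rf{L:r-uniq-geo} bounding the distance from each comparison apex to the base), and concludes via the chain $d(z_1,z_2)\le d(z_1,u_1)+d(u_1,u_2)+d(u_2,z_2)$ computed inside $X$. You instead make a single appeal to \rf{T:rough 4-pt}, applying the $(2C,0)$-rough $4$-point condition to the one tuple $(x,\g_1(t),y,\g_2(t))$, and then do all remaining work in the plane: two applications of \rf{L:r-uniq-geo} to the broken paths through $\bu$ and $\bv$, the key observation that $D:=|\bx-\by|\ge L$ makes the excess only help, so that $S_i^2-D^2\le(L+h_i)^2-L^2=2Lh_i+h_i^2$ and $L$ rather than $D$ appears in the radicand, and the tangential estimate $|\bu'-\bv'|\le h_2$ from the monotonicity properties \rf{E:laga1} and \rf{E:laga2}. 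The two arguments are close cousins at bottom: the paper's proof of the implication (b)$\Rightarrow$(c) in \rf{T:rough 4-pt} uses exactly the same auxiliary-segment-plus-two-applications trick, which is why both routes land on the identical constant $2C+h_2+M_1+M_2$. What yours buys is modularity: after one citation, everything is elementary planar geometry, and the argument makes transparent why the diagonal expansion $D\ge L$ is harmless. Two caveats are worth flagging. First, the constant $2C$ you use is extracted from the \emph{proof} of \rf{T:rough 4-pt} (the theorem's statement promises only quantitative dependence of parameters), which is legitimate here since there is no circularity, but it should be said explicitly. Second, your route cannot recover the refinement of \rf{R:slight improve}: that improvement exploits the freedom to take the auxiliary path's $h$ at least $h_1$, so that $\g_1$ itself serves as the shared side, saving one application of the weak condition and improving $2C$ to $C$; this flexibility disappears once the $4$-point condition is used as a black box.
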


\begin{proof*}{Proof}
The result follows from the triangle inequality if $t\le h_2$. Let us
therefore assume that $h_2\le L+h_1$ and fix $t\in(0,L+h_1]$. Let
$t':=t\mino L$.

Throughout this proof $i$ can equal either 1 or 2. We write $z_i:=\g_i(t)$
and choose a path $\g_3:[0,L+h]\to X$ from $x$ to $y$, parametrized by
arclength, for some $0<h\le H(x,y,z_1)\mino H(x,y,z_2)$. Let
$T^i=T_h(x,y,z_i)$ be a $h$-short triangle which includes $\g_3$ as a side,
let $\bT^i=T(\bx,\by,\bz_i)$ be corresponding comparison triangles in
$M^2_0$, let $\bu_i$ be a point on $[\bx,\by]$ that is closest to $\bz_i$,
and let $u_i=\g_3(t_i)$, where $t_i:=|\bu_i-\bx|$. Note that $\bu_i$ is a
comparison point for $u_i$.

By basic geometry, we have $t_i\le |\bz_i-\bx|=d(z_i,x)$ and $L-t_i\le
|\bz_i-\by|=d(z_i,y)$, and so $t_i\in[t-h_i,t]$. Thus $|u_1-u_2|\le h_2$.

For $i=1,2$, the concatenation of the two sides of $\bT^i$ other than
$[\bx,\by]$ forms a $h_i$-short path, so by weak $C$-\hrcat(0) and
\rf{L:r-uniq-geo},
\begin{align*}
d(z_1,z_2) &\le d(z_1,u_1)+d(u_1,u_2)+d(u_2,z_2) \\
&\le|\bz_1-\bu_1|+h_2+|\bu_2-\bz_2|+2C \\
&\le 2C+h_2+\frac{\sqrt{2Lh_1+h_1^2}}{2}+\frac{\sqrt{2Lh_2+h_2^2}}{2}\,,
\end{align*}
as required.
\end{proof*}

\begin{rem}\label{R:slight improve}
It is clear from the above proof that the upper bound can be improved if
$h_1\le h$ (where $h$ is as in the proof). In this case, we get
$$
d(\g_1(t),\g_2(t)) \le C + h_2 + \frac{\sqrt{2Lh_2+h_2^2}}{2}\,,\qquad
  0\le t\le L+h_1\,,
$$
and if $h_2\le 1/(1\maxo L)$, then
$$
d(\g_1(t),\g_2(t)) \le C + 1 + \frac{\sqrt{3}}{2}\,,\qquad 0\le t\le L+h_1\,.
$$
\end{rem}

Using the above theorem and remark, we readily get the first statement of
the following corollary. The weak and very weak variants follow by an
examination of the proof of \rf{T:r-uniq-geo}. Alternatively, the weak
rCAT(0) part of this corollary follows from \rf{T:rough 4-pt} (with the same
constant $C''$).

\begin{cor}\label{C:infinitesimal}
A $C$-\hrcat(0) space is $C'$-rCAT(0), for $C'=3C+2+\sqrt{3}$. A weak (or
very weak) $C$-\hrcat(0) space is weak (or very weak) $C''$-rCAT(0), for
$C''=2C+1+\sqrt{3}/2$.
\end{cor}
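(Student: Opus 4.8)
The plan is to exploit that the \hrcat(0) and \rcat(0) conditions differ only in the admissible short function: the former holds for \emph{some} short function $H$, while the latter demands the rough CAT($0$) inequality for triangles whose sides are $h'$-short with $h'$ the \emph{standard} short function. Since we may assume, as observed earlier, that $H$ is pointwise at most the standard short function, the task is to upgrade control of genuinely $H(x,y,z)$-short triangles to control of the typically longer standard-short triangles. The crucial point is that a comparison triangle depends only on the three vertex distances, so a standard-short triangle $T_{h'}(x,y,z)$ and an auxiliary $H(x,y,z)$-short triangle on the same vertices share the \emph{same} comparison triangle $T(\bx,\by,\bz)$; this lets me replace each side of the given triangle by a genuinely short side and transfer comparison points between the two.

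First I would treat the full \rcat(0) statement. Fix a standard-short triangle with $u,v$ on two of its sides and chosen comparison points $\bu,\bv$. Writing $h:=H(x,y,z)$, I replace the two sides carrying $u$ and $v$ by genuinely $h$-short sides whose excess lengths $h_1$ satisfy $h_1\le h\mino h_2$, where $h_2$ is the excess length of the original side; this is possible in a length space, and yields an auxiliary $h$-short triangle with the same comparison triangle. On each new side I produce a point $\tilde u$ (resp.\ $\tilde v$) admitting the \emph{same} comparison point $\bu$ (resp.\ $\bv$): a short check shows that the point at arclength $s\mino d(x,y)$ along $[\bx,\by]$ is a legitimate comparison point both for the original point at arclength $s$ and for the point at arclength $s\mino(d(x,y)+h_1)$ on the new side, the latter degenerating to a vertex once $s$ exceeds the new side's length. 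Since $h_2\le h'\le 1/(1\maxo L)$ with $L$ the relevant vertex distance, \rf{R:slight improve} bounds each replacement error, giving $d(u,\tilde u),\,d(v,\tilde v)\le C+1+\sqrt3/2$. Applying the full $C$-\hrcat(0) inequality to $\tilde u,\tilde v$ on the auxiliary triangle gives $d(\tilde u,\tilde v)\le|\bu-\bv|+C$, and the triangle inequality then yields $d(u,v)\le|\bu-\bv|+3C+2+\sqrt3$, as claimed.

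The weak statement runs identically, but only the single side opposite the distinguished vertex $v=x$ need be replaced, so only one error term $C+1+\sqrt3/2$ is incurred and the constant drops to $C''=2C+1+\sqrt3/2$. Alternatively, the weak case follows by composing (b)$\Rightarrow$(c)$\Rightarrow$(a) in \rf{T:rough 4-pt}, tracking $C'=2C$ and then $C=C'+1+\sqrt3/2$.

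The main obstacle is the very weak statement, which is genuinely more delicate for two reasons. First, \rf{T:r-uniq-geo} and \rf{R:slight improve} are stated for \emph{weak} spaces, whereas here I may only assume the very weak condition, so they cannot be used as a black box. Second, the transferred point $\tilde u$ must itself be a midpoint, forcing its comparison point to be the Euclidean midpoint $\bm$ of $[\by,\bz]$; consequently the midpoint $u$ of the standard-short side and the midpoint $\tilde u$ of the auxiliary side sit at arclength parameters that may differ by as much as $h_2-h_1$, so the clean parameter-matching used above breaks down and a naive estimate loses an extra additive constant. I expect the resolution to require re-running the argument behind \rf{T:r-uniq-geo} with only the midpoint comparison available, so that the slack term already present there (bounded by $h_2\le 1$) simultaneously absorbs this parameter discrepancy; verifying that this still delivers the sharp constant $C''=2C+1+\sqrt3/2$, rather than a larger one, is the crux of the very weak case.
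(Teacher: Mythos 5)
Your overall route is the paper's own: play the standard-short triangle off against a genuinely $H$-short triangle on the same vertices (so that the comparison triangle is shared), bound the side-replacement error via \rf{T:r-uniq-geo} and \rf{R:slight improve}, and finish with the triangle inequality, at cost $C+2(C+1+\sqrt{3}/2)$ in the full case and $C+(C+1+\sqrt{3}/2)$ in the weak case; your alternative derivation of the weak case from \rf{T:rough 4-pt} is also the paper's, and that part is complete. But two gaps prevent the proposal from proving the statement as claimed. The first is a quantifier slip over comparison points: \rf{D:rCATk cond} requires $d(u,v)\le|\bu-\bv|+C'$ for \emph{every} pair of comparison points, while your transfer establishes it only for the single pair you construct, say $\bw_u,\bw_v$, at arclengths $s_u\mino d(x,y)$ and $s_v\mino d(y,z)$. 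That pair is not extremal, so the inequality for it does not imply the inequality for all pairs: $\bw_u$ is the admissible point \emph{closest} to $\by$ but $\bw_v$ is the admissible point \emph{farthest} from $\by$, and in a comparison triangle with small angle at $\by$ one can have $|\bw_u-\bw_v|$ of size $1$ while another admissible pair $\bu,\bv$ nearly coincide. Patching this with ``any two comparison points for $u$ differ by at most $h_2\le 1$'' only yields $3C+4+\sqrt{3}$ (resp.\ $2C+2+\sqrt{3}/2$). To keep the stated constants you must start from the given $\bu$: set $\tilde u:=\tilde{\g}(|\bx-\bu|)$, legitimate since $|\bx-\bu|\le d(x,y)\le\len(\tilde{\g})$, and then $\bu$ is itself a comparison point for $\tilde u$; the bound $d(u,\tilde u)\le C+h_2+\sqrt{3}/2$ then comes not from citing \rf{R:slight improve} (which compares equal parameters only and costs an extra $h_2$) but from rerunning its proof, observing that the foot-point parameter $t_2$ there and $|\bx-\bu|$ both lie in $[s_u-h_2,s_u]$, so the $h_2$ slack already present absorbs the mismatch. (Two smaller slips of the same kind: \rf{R:slight improve} needs $h_1\le H$ at triples containing the compared point, e.g.\ $H(u,x,y)$, not merely $H(x,y,z)$ --- harmless, since $u,v$ are fixed, but it must be imposed when choosing $h_1$ --- and the third side must be replaced as well, so that the auxiliary triangle really is $H(x,y,z)$-short.)

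The second gap is that the very weak case, which the statement asserts with the same constant $C''$, is deferred rather than proved. It does go through, and more simply than you fear: no analogue of \rf{T:r-uniq-geo} and no parameter matching are needed. Take $\g_1$ a genuinely $h_1$-short segment from $y$ to $z$ with $h_1\le H(x,y,z)\mino H(u,y,z)$, and let $\tilde u$ be any $h_1$-midpoint of $\g_1$. Apply the very weak hypothesis twice: to the auxiliary triangle on $x,y,z$ (apex $x$, opposite side $\g_1$), giving $d(x,\tilde u)\le|\bx-\bm|+C$ with $\bm$ the midpoint of $[\by,\bz]$; and to the thin triangle on $u,y,z$ (apex $u$, opposite side $\g_1$), giving $d(u,\tilde u)\le|\bz_2-\bm|+C$, where $\bz_2$ is the comparison vertex for $u$. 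The observation you were missing is that ``$\tilde u$ is an $h_1$-midpoint of $\g_1$'' depends only on $\g_1$ and on the length $d(y,z)$ of the comparison segment, so the \emph{same} point $\tilde u$ is admissible in both applications; the parameter discrepancy you worried about never enters. Finally, since $u$ is an $h'$-midpoint of the original side, $d(y,u)$ and $d(z,u)$ are each at most $d(y,z)/2+h_2$ and their sum is at most $d(y,z)+h_2$, so the Euclidean median formula gives $|\bz_2-\bm|^2\le d(y,z)h_2/2+h_2^2\le 3/2$, whence $d(x,u)\le|\bx-\bm|+2C+\sqrt{3/2}$, comfortably inside $C''=2C+1+\sqrt{3}/2$. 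This two-triangle argument is exactly the ``examination of the proof of \rf{T:r-uniq-geo}'' that the paper's one-line justification alludes to.
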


We now state a variant of \rf{T:r-uniq-geo} for CAT(0) spaces; we omit the
very similar (but less technical) proof.

\begin{thm}\label{T:CAT0-r-uniq-geo} Let $x,y$ be a pair of points in a
CAT($0$) space $(X,d)$, with $L:=d(x,y)$. For $i=1,2$, let $h_i\ge 0$ and
let $\g_i:[0,L+h_i]\to X$ be a $h_i$-short segment from $x$ to $y$,
parametrized by arclength; we assume that $h_1\le h_2$. Then
$$
d(\g_1(t),\g_2(t)) \le h_2 + \frac{\sqrt{2Lh_1+h_1^2}}{2} +
  \frac{\sqrt{2Lh_2+h_2^2}}{2}\,,\qquad 0\le t\le L+h_1\,,
$$
In particular, if $h_1=0$ and $h_2\le 1/(1\maxo L)$, then
$$ d(\g_1(t),\g_2(t)) \le 1 + \frac{\sqrt{3}}{2}\,,\qquad 0\le t\le L\,. $$
\end{thm}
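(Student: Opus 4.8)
The plan is to mimic the proof of \rf{T:r-uniq-geo}, exploiting the two features that make the CAT($0$) case cleaner: the space is uniquely geodesic, so the geodesic $[x,y]$ itself can serve as the common ``opposite side'' for both comparison triangles (no auxiliary short path $\g_3$ is needed), and the comparison inequality holds with no additive fudge (so the $2C$ term disappears). Fix $t\in[0,L+h_1]$, write $z_i:=\g_i(t)$ for $i=1,2$, and let $\sigma:[0,L]\to X$ be the unique geodesic $[x,y]$ parametrized by arclength. If $h_i=0$ then $\g_i=\sigma$ and the corresponding estimate below is trivial, so there is no loss in assuming $L>0$ as in \rf{L:r-uniq-geo}.

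For each $i$ I would form the geodesic triangle $T(x,y,z_i)$ with side $[x,y]=\sigma$ and a comparison triangle $T(\bx,\by,\bz_i)$ in $M^2_0$ with base $[\bx,\by]$. Since $\g_i$ restricted to $[0,t]$ and to $[t,L+h_i]$ realizes $d(x,z_i)\le t$ and $d(z_i,y)\le L+h_i-t$, the two-segment path from $\bx$ through $\bz_i$ to $\by$ has length $d(x,z_i)+d(z_i,y)\le L+h_i$. Applying \rf{L:r-uniq-geo} to this planar path (with base length $l=L$ and excess at most $h_i$, using monotonicity of $\tfrac12\sqrt{2lh+h^2}$ in $h$) produces a point $\bu_i\in[\bx,\by]$ with $|\bz_i-\bu_i|\le M_i:=\tfrac12\sqrt{2Lh_i+h_i^2}$. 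Taking its comparison point $u_i:=\sigma(t_i)$, $t_i:=|\bx-\bu_i|$, the CAT($0$) inequality gives $d(z_i,u_i)\le|\bz_i-\bu_i|\le M_i$.

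Next I would control the feet $t_i$ and combine. The construction in \rf{L:r-uniq-geo} places $\bu_i$ at the orthogonal projection of $\bz_i$ onto the line through $\bx,\by$, clamped to the segment, so $|\bx-\bu_i|\le|\bx-\bz_i|=d(x,z_i)\le t$ and $|\by-\bu_i|\le|\by-\bz_i|=d(z_i,y)\le L+h_i-t$; hence $t_i\in[t-h_i,t]\cap[0,L]$. Since $h_1\le h_2$, both $t_1$ and $t_2$ lie in the interval $[t-h_2,t]$, and as clamping to $[0,L]$ is $1$-Lipschitz we get $|t_1-t_2|\le h_2$, whence $d(u_1,u_2)=|t_1-t_2|\le h_2$ because $\sigma$ is a unit-speed geodesic. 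The triangle inequality then yields
\[
d(z_1,z_2)\le d(z_1,u_1)+d(u_1,u_2)+d(u_2,z_2)\le M_1+h_2+M_2,
\]
which is exactly the asserted bound. For the final assertion I set $h_1=0$ (so $\g_1=\sigma$, $M_1=0$, and $t$ ranges over $[0,L]$) and use $h_2\le 1/(1\maxo L)$ to get $2Lh_2+h_2^2\le 3$, hence $M_2\le\sqrt3/2$ and $h_2\le1$, giving $d(\g_1(t),\g_2(t))\le1+\sqrt3/2$.

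I expect no genuine obstacle here; this is the promised ``less technical'' variant. The only points needing a little care are the clamping of $\bu_i$ to the segment $[\bx,\by]$ when the perpendicular foot falls outside it (which only decreases $|\bz_i-\bu_i|$ and keeps $t_i\in[0,L]$, so both estimates survive), and the parameter range $t\in(L,L+h_1]$, where one checks that $t-L\le h_1\le h_2$ keeps $t_1,t_2$ within an interval of length $h_2$ so that $d(u_1,u_2)\le h_2$ still holds.
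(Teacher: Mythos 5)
Your proposal is correct and takes essentially the same approach the paper intends: the paper omits the proof of this theorem, describing it as a ``very similar (but less technical)'' variant of its proof of Theorem~\ref{T:r-uniq-geo}, and your argument is precisely that proof with the auxiliary short path $\gamma_3$ replaced by the unique geodesic $[x,y]$ and the weak rCAT(0) comparison (which costs $2C$) replaced by the exact CAT(0) comparison (which costs nothing). Your handling of the two edge cases (clamping the projection to $[\bar x,\bar y]$ and the range $t\in(L,L+h_1]$) matches the ``basic geometry'' step in the paper's proof of Theorem~\ref{T:r-uniq-geo}, so nothing further is needed.
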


The above theorem has the following easy corollary.

\begin{cor}\label{C:CAT0 is rCAT0}
A CAT(0) space is $C$-rCAT(0) for $C=2+\sqrt{3}$.
\end{cor}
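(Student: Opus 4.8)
The plan is to read off the corollary from \rf{T:CAT0-r-uniq-geo} by comparing the sides of an $h$-short triangle with the genuine geodesic sides joining the same vertices. Recall that a CAT(0) space is geodesic and (as noted at the start of \rf{S:rcat0}) uniquely geodesic, so the three pairs of vertices of any $h$-short triangle $T_h(x,y,z)$ are joined by unique geodesics, and the geodesic triangle $T(x,y,z)$ they form has the \emph{same} Euclidean comparison triangle $T(\bx,\by,\bz)$ as $T_h(x,y,z)$, since the vertex distances agree. I would verify the $C$-rCAT(0) condition of \rf{D:rCATk cond} directly, with $C=2+\sqrt3$ and the standard short function $h=H(x,y,z)$, by transporting the two chosen points from the short sides to nearby points on the geodesic sides, applying the \emph{exact} CAT(0) inequality there, and paying a controlled price for the transport.

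First I would fix $u,v$ on two distinct sides of $T_h(x,y,z)$ with comparison points $\bu,\bv\in T(\bx,\by,\bz)$; say $u\in[x,y]_h$, and write $L=d(x,y)$. Since $h=H(x,y,z)\le 1/(1\maxo d(x,y))=1/(1\maxo L)$, the geodesic $[x,y]$ and the short segment $[x,y]_h$ are precisely the two segments to which the ``in particular'' clause of \rf{T:CAT0-r-uniq-geo} applies, with $h_1=0$ and $h_2=h$. That clause produces a point $u'$ on the geodesic $[x,y]$ with $d(u,u')\le 1+\sqrt3/2$. The monotonicity built into that theorem (inherited from \rf{E:laga1}--\rf{E:laga2} of \rf{L:r-uniq-geo}) gives $d(x,u')\le\len([x,u]_h)$ and $d(y,u')\le\len([u,y]_h)$, so the comparison point of $u'$ on $[\bx,\by]$ is itself a legitimate comparison point for $u$; I would use this to identify it with $\bu$. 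Doing the same for $v$ yields a point $v'$ on the appropriate geodesic side with $d(v,v')\le 1+\sqrt3/2$ and comparison point $\bv$.

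Finally I would invoke the exact CAT(0) comparison inequality for the geodesic triangle $T(x,y,z)$: since $u',v'$ lie on distinct geodesic sides with comparison points $\bu,\bv$, we get $d(u',v')\le|\bu-\bv|$. Combining this with the two transport estimates through the triangle inequality,
$$
d(u,v)\le d(u,u')+d(u',v')+d(v,v')\le |\bu-\bv| + 2\Bigl(1+\tfrac{\sqrt3}{2}\Bigr) = |\bu-\bv| + 2 + \sqrt3,
$$
which is exactly the $C$-rCAT(0) inequality with $C=2+\sqrt3$. As the particular pair of sides carrying $u$ and $v$ is immaterial by the symmetry of the triangle (any two sides share a vertex), this would establish the corollary.

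The step I expect to be the main obstacle is the middle one: making the comparison points line up so that the two contributions of $1+\sqrt3/2$ add to precisely $2+\sqrt3$ with no residual multiple of $h$. Because comparison points in the rough setting are not unique -- for a fixed $u$ they range over a subinterval of $[\bx,\by]$ of length at most $h$ -- one must take care to produce $u'$ whose geodesic comparison point is a valid comparison point for $u$ (and can be matched with the prescribed $\bu$) rather than merely lying within $h$ of it; the monotonicity inequalities \rf{E:laga1}--\rf{E:laga2} underpinning \rf{T:CAT0-r-uniq-geo} are exactly what make this possible while keeping $d(u,u')\le 1+\sqrt3/2$.
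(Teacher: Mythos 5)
Your skeleton (transport $u$ and $v$ to the geodesic sides, apply the exact CAT(0) inequality there, and pay $1+\sqrt3/2$ per point) is exactly the derivation the paper intends when it calls this an easy consequence of \rf{T:CAT0-r-uniq-geo}, but the step you yourself flagged as the main obstacle is genuinely broken as you resolve it. The condition in \rf{D:rCATk cond} is universal over comparison points: $d(u,v)\le|\bu-\bv|+C$ must hold for \emph{every} admissible pair $\bu,\bv$, not just for some convenient pair (the paper confirms this reading at the end of the proof of \rf{T:rough 4-pt}, where an extra $+1$ is paid precisely to pass from a particular comparison point to a general one). Your point $u'=\g_1(t)$, obtained by equal-parameter transport, has exact comparison point at distance $\len([x,u]_h)$ from $\bx$, which is one \emph{endpoint} of the interval (of width up to $h$) of admissible comparison points for $u$; the prescribed $\bu$ may sit at the other endpoint. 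The monotonicity inequalities \rf{E:laga1}--\rf{E:laga2} guarantee only that the comparison point of $u'$ is \emph{an} admissible comparison point for $u$; they give no license to ``identify it with $\bu$''. Patching this honestly costs $|\bu-\bu'|\le h$ for each of the two points, so what your argument actually proves is $d(u,v)\le|\bu-\bv|+2+\sqrt3+2h$; since the standard short function permits $h=1$ (all triangles of diameter at most $1$), the uniform constant you obtain is $4+\sqrt3$, not $2+\sqrt3$, and in that small-triangle regime no trivial estimate closes the difference.

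The repair that recovers the stated constant is to transport $u$ instead to the point $u''\in[x,y]$ with $d(x,u'')=|\bx-\bu|$, whose exact comparison point \emph{is} the prescribed $\bu$, and to show $d(u,u'')\le h+\sqrt3/2\le 1+\sqrt3/2$. This cannot be read off the statement of \rf{T:CAT0-r-uniq-geo} as a black box, because its ``$+h_2$'' term is already spent on equal-parameter alignment; one must reuse the ingredient inside its proof, namely \rf{L:r-uniq-geo} combined with the CAT(0) inequality: the resulting projection $w\in[x,y]$ of $u$ satisfies $d(u,w)\le\sqrt{2Lh+h^2}/2\le\sqrt3/2$, and the analogues of \rf{E:laga1}--\rf{E:laga2} place $d(x,w)$ in the interval $[\len([x,u]_h)-h,\,\len([x,u]_h)]$ --- the same interval that contains $|\bx-\bu|$ --- whence $d(w,u'')\le h$ and $d(u,u'')\le h+\sqrt3/2$. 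Doing the same for $v$ and applying CAT(0) to $u'',v''$, whose comparison points are exactly the prescribed $\bu,\bv$, yields
$$
d(u,v)\le \bigl(h+\tfrac{\sqrt3}{2}\bigr)+|\bu-\bv|+\bigl(h+\tfrac{\sqrt3}{2}\bigr)
\le |\bu-\bv|+2+\sqrt3\,,
$$
as claimed. So the idea and the bookkeeping of constants are right; the missing point is that the $h\le 1$ in the budget must be spent on reaching the \emph{prescribed} comparison point, not on equal-parameter alignment.
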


We record here a Rough Convexity lemma for rCAT(0) spaces. This is a rough
analogue of \cite[Proposition II.2.2]{BH}, and can be proved in a similar
way, so we leave its proof as an exercise.

\begin{lem}\label{L:rough cx} Suppose $a_1,a_2,b_1,b_2$ are points in a
$C$-rCAT(0) space. Let $\g_i:[0,1]\to X$ be constant speed $h_i$-short paths
parametrized by arclength from $a_i$ to $b_i$, $i=1,2$, where $h_i=1/(1\maxo
d(a_i,b_i))$. Then
$$ d(\g_1(t),\g_2(t))\le (1-t)d(a_1,a_2)+td(b_1,b_2)+2C\,. $$
If either $a_1=a_2$ or $b_1=b_2$, then we can replace $2C$ by $C$ in the
above estimate.
\end{lem}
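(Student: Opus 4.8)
The plan is to mimic the classical proof that the metric is convex in CAT(0) spaces (\cite[II.2.2]{BH}), replacing geodesics by $h$-short paths and the CAT(0) comparison by the rough CAT(0) comparison. The argument has two parts: a \emph{common-endpoint case}, in which only a single comparison is made and so the additive constant is $C$, and a reduction of the general case to this one, which costs a second comparison and hence $2C$.

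For the reduction I would introduce a constant-speed $h_3$-short path $\sigma\colon[0,1]\to X$ from $a_1$ to $b_2$, with $h_3=1/(1\maxo d(a_1,b_2))$, and apply the common-endpoint estimate twice: to the pair $\g_1,\sigma$ (which share the initial point $a_1$), giving $d(\g_1(t),\sigma(t))\le t\,d(b_1,b_2)+C$; and to the pair $\sigma,\g_2$ (which share the terminal point $b_2$), giving $d(\sigma(t),\g_2(t))\le(1-t)\,d(a_1,a_2)+C$. The triangle inequality then yields the stated bound with constant $2C$. If $a_1=a_2$ or $b_1=b_2$, no intermediate path is needed, a single application suffices, and the constant improves to $C$, as claimed.

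For the common-endpoint case, say $a_1=a_2=a$, I would form an $h$-short triangle $T_h(a,b_1,b_2)$ with $h:=H(a,b_1,b_2)$ the standard short function, fix a Euclidean comparison triangle $T(\bar a,\bar b_1,\bar b_2)$, and follow the two sides issuing from $a$. For the arclength-fraction-$t$ point $u$ on the side to $b_i$, the proportional point $\bu=(1-t)\bar a+t\,\bar b_i$ is a legitimate comparison point in the sense of \rf{D:comp-tri}: since that side is a path its length is at least $d(a,b_i)$, so $|\bar a-\bu|=t\,d(a,b_i)$ and $|\bu-\bar b_i|=(1-t)\,d(a,b_i)$ are bounded by the respective subpath lengths. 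Writing $\bu,\bv$ for the two proportional comparison points, a one-line planar computation gives $|\bu-\bv|=t\,|\bar b_1-\bar b_2|=t\,d(b_1,b_2)$, whence the rough CAT(0) condition (\rf{D:rCATk cond}) gives $d(u,v)\le t\,d(b_1,b_2)+C$. The terminal-point subcase is identical after reversing both parametrizations, producing the factor $(1-t)\,d(a_1,a_2)$ instead.

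The main obstacle is the matching of short parameters. The validity of the proportional comparison points above is free, but the paths $\g_i$ are only $h_i$-short with $h_i=1/(1\maxo d(a_i,b_i))$, whereas the rough CAT(0) condition is guaranteed only for triangles whose sides are $h$-short for the \emph{standard} value $h=H(a,b_1,b_2)$; in general $h<h_i$ (for instance when $d(b_1,b_2)$ dominates), so the $\g_i$ need not be admissible sides of $T_h(a,b_1,b_2)$, and by \rf{X:big h} one cannot simply enlarge $h$. I would bridge this by running the comparison with auxiliary $h$-short segments $\sigma_i=[a,b_i]_h$ in place of the $\g_i$ and then estimating $d(\g_i(t),\sigma_i(t))$ via the rough-unique-geodesic bound of \rf{T:r-uniq-geo}, in the sharpened form of \rf{R:slight improve} (applicable since $\sigma_i$ carries the smaller excess). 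This single point is precisely the technical detail suppressed by leaving the proof as an exercise; everything else is the routine CAT(0) template.
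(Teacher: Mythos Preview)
Your approach is exactly what the paper intends: it explicitly says the lemma ``can be proved in a similar way'' to \cite[II.2.2]{BH} and leaves it as an exercise, and you have reproduced the Bridson--Haefliger two-triangle reduction (diagonal $\sigma$ from $a_1$ to $b_2$, two common-endpoint comparisons) with the proportional points $\bu=(1-t)\bar a+t\bar b_i$ serving as legitimate comparison points in the sense of \rf{D:comp-tri}. So structurally there is nothing to add.

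The one substantive point is the short-parameter mismatch you flag, and here you are right that it is not cosmetic: with $h_i=1/(1\maxo d(a_i,b_i))$ one may well have $h_i>H(a,b_1,b_2)$, so $\g_i$ is not an admissible side of the $H$-short triangle and the $C$-rCAT(0) inequality is not directly available. Your repair via auxiliary $H$-short sides $\sigma_i$ together with \rf{T:r-uniq-geo}/\rf{R:slight improve} is correct, but note that it does \emph{not} preserve the constants $C$ and $2C$ stated in the lemma: each appeal to \rf{R:slight improve} costs an extra $C+1+\sqrt3/2$, so the common-endpoint case comes out as roughly $3C+2+\sqrt3$ rather than $C$. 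Since the paper gives no proof, one should read the stated constants as indicative rather than sharp (the lemma is used only qualitatively later), or else take the hypothesis $h_i=1/(1\maxo d(a_i,b_i))$ as shorthand for ``sufficiently short relative to all the data''; either reading is consistent with your argument.
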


\begin{rem}
\label{R:rCAT0-not-inv-qiso} Note that $\R^2$ with the Euclidean metric is
CAT(0), while $\Z^2$ with the $\ell^1$-metric is not even very weak rCAT(0).
Thus rCAT(0) is not invariant under quasi-isometry. By comparison, we note
the well-known facts that Gromov hyperbolicity is invariant under
quasi-isometry in the context of geodesic spaces, while the CAT(0) property
is only invariant under isometry.
\end{rem}


\section{Examples}
\label{S:examples}

We already know that the class of rCAT(0) spaces include both Gromov
hyperbolic (by \rf{T:rough 4-pt} and the fact that rCAT($\k$) implies
rCAT(0) for $\k<0$) and CAT(0) spaces (by \rf{C:CAT0 is rCAT0}). Here we
give two constructions (products and gluing) for getting new rCAT(0) spaces
from old ones, making it easy to construct rCAT(0) spaces that are neither
CAT(0) nor Gromov hyperbolic.

For metric spaces $(X_1,d)$ and $(X_2,d)$, the \emph{$l^2$-product}
$(Z,|\cdot|)$ is given by $X := X_1 \times X_2$ and
$$
d((x_1,x_2),(y_1,y_2)) = \sqrt{(d(x_1,y_1))^2 + (d(x_2,y_2))^2}.
$$
It is well known that $(X,d)$ is a metric space. Note that we are using $d$
to indicate three different metrics: in all cases, the reader should infer
from the context which one is meant. We also use $\len(\g)$ to indicate
length of a path $\g$ in any one of these spaces.

The product of CAT(0) spaces is CAT(0). A proof follows immediately from the
equivalence of CAT(0) with the CN inequality of Bruhat and Tits.  Since it
is not clear if rCAT(0) is equivalent to a rough version of the CN
inequality (that is, bolicity or very weak rCAT(0), as shown in
\rf{P:bolic-vwrCAT(0)}), no such easy proof of the rough analogue of this
result is available. Nevertheless it is true according to the following
theorem. Quantitative dependence of the roughness constant is most neatly
stated using \hrcat(0), but note that this gives quantitative dependence of
the \rcat(0) roughness constant by \rf{C:infinitesimal}.

\begin{thm}
\label{T:product-rCAT0} If $(X_1,d)$ and $(X_2,d)$ are both $C$-\hrcat(0)
spaces, then their $l^2$-product $(X,d)$ is a $(\sqrt{2}\,C)$-\hrcat(0)
space.
\end{thm}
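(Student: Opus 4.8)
The plan is to reduce the product condition to the two factor conditions by projecting everything to the coordinates, and to recover the constant $\sqrt2\,C$ from a single application of Cauchy--Schwarz at the very end. First I would record the elementary metric facts about the $l^2$-product. Writing a path as $\g=(\g^1,\g^2)$ with coordinate paths $\g^j$ in $X_j$, the Minkowski inequality gives $\len(\g)^2\ge\len(\g^1)^2+\len(\g^2)^2$, while $d(\g(s),\g(s'))\ge d(\g^j(s),\g^j(s'))$ for each $j$. From these I would prove a \emph{projection lemma}: if $\g$ is an $h$-short segment joining $x=(x^1,x^2)$ and $y=(y^1,y^2)$, with $l:=d(x,y)$, then each $\g^j$ is an $\eta_j$-short segment joining $x^j,y^j$, with $\eta_j\le\sqrt{2lh+h^2}$. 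Indeed, putting $l_j:=d(x^j,y^j)$ and $\len(\g^j)=l_j+\eta_j$, and using $l^2=l_1^2+l_2^2$, the inequality $\sum_j(l_j+\eta_j)^2\le(l+h)^2$ forces $\eta_j^2\le 2lh+h^2$.

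Next I would fix the short function. Since each $X_j$ is $C$-\hrcat(0) with some short function $H_j$, I would choose a short function $H$ on $X$ small enough that, for every $h$-short triangle $T_h(x,y,z)$ with $h=H(x,y,z)$, all three projected sides in each factor are $H_j(x^j,y^j,z^j)$-short; by the projection lemma it suffices that $H(x,y,z)$ be bounded by a suitable multiple of $\bigl(H_1(x^1,y^1,z^1)\mino H_2(x^2,y^2,z^2)\bigr)^2/(1\maxo D)$, where $D$ is the largest vertex distance. Each projected triangle then satisfies the $C$-rough CAT(0) condition in its factor, so for $u,v$ on different sides of $T$, with coordinate points $u^j,v^j$ on the corresponding sides of the projected triangles, I obtain $d(u^j,v^j)\le|\bar u^j-\bar v^j|+C$ for any factor comparison points $\bar u^j,\bar v^j$.

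The geometric heart is the comparison between the single planar triangle $\bT=T(\bx,\by,\bz)$ and the two factor comparison triangles $\bT^j=T(\bx^j,\by^j,\bz^j)$. Placing $\bT^1,\bT^2$ in orthogonal copies of $\R^2$ inside $\R^4$, the points $P_x=(\bx^1,\bx^2)$, $P_y$, $P_z$ satisfy $|P_x-P_y|^2=d(x^1,y^1)^2+d(x^2,y^2)^2=d(x,y)^2$, and likewise for the other sides, so $P_xP_yP_z$ is congruent to $\bT$ by an isometry $\Phi$ carrying each side of $\bT$ onto the corresponding segment of $P_xP_yP_z$. Under $\Phi$ a point of $[\by,\bz]$ at arclength-fraction $\sigma$ corresponds to the point of $[P_y,P_z]$ whose two coordinates sit at the \emph{same} fraction $\sigma$ along $[\by^j,\bz^j]$. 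Hence, if the factor comparison points $\bar u^j,\bar v^j$ are taken at the same fractions as $\bar u,\bar v$, then $|\bar u^1-\bar v^1|^2+|\bar u^2-\bar v^2|^2=|\bar u-\bar v|^2$. Granting this identity, the endgame is clean: with $p_j:=|\bar u^j-\bar v^j|$ and $Q:=|\bar u-\bar v|$,
\begin{align*}
d(u,v)^2=\sum_j d(u^j,v^j)^2
&\le\sum_j(p_j+C)^2=\sum_j p_j^2+2C\sum_j p_j+2C^2\\
&\le Q^2+2\sqrt2\,CQ+2C^2=(Q+\sqrt2\,C)^2,
\end{align*}
using $\sum_j p_j\le\sqrt2\,(\sum_j p_j^2)^{1/2}=\sqrt2\,Q$; this yields exactly $d(u,v)\le|\bar u-\bar v|+\sqrt2\,C$, with no slack to spare.

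The main obstacle is precisely the phrase ``taken at the same fractions.'' A comparison point on a side of $\bT$ has its two coordinates at one common fraction $\sigma$, whereas a \emph{valid} factor comparison point for $u^j$ must sit at a fraction dictated by $\len(\g^j[y^j,u^j])$, and the admissible factor range $[\,(t_j-\eta_j)/a_j,\ t_j/a_j\,]$ need not meet the admissible range for $\bT$. The tool I would use to control this is that $h$-shortness forces the coordinate paths to be nearly synchronized: the partial-length curve $s\mapsto(\len(\g^1|_{[0,s]}),\len(\g^2|_{[0,s]}))$ is $1$-Lipschitz (again by Minkowski) and runs from the origin to a point at distance $\ge L-h$ over parameter-length $L$, so by the same ellipse estimate as in \rf{L:r-uniq-geo} it stays within $\tfrac12\sqrt{2Lh+h^2}$ of the straight segment, which pins the fraction mismatch down to order $\sqrt{hL}$. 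I expect the genuinely delicate step to be squeezing this synchronization hard enough to realise the identity $\sum_j|\bar u^j-\bar v^j|^2\le|\bar u-\bar v|^2$ for admissible comparison points exactly, rather than only up to a small error that would degrade the constant; this is where the freedom to shrink $H$, together with a careful choice of which comparison point of $\bT$ to test against, has to be leveraged, and it is the crux on which the clean value $\sqrt2\,C$ stands or falls.
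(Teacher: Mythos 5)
Your proposal has the same skeleton as the paper's proof, ingredient for ingredient. Your ``projection lemma'' is a quantitative sharpening of the paper's \rf{L:product-rCAT0} and its consequence (the paper only records that the coordinate paths of a $h$-short path are $h'$-short with $h'\to 0$ as $h\to 0$; your bound $\eta_j\le\sqrt{2lh+h^2}$ makes this explicit). The identification of the two factor comparison triangles, placed in orthogonal planes of $\R^4$, with a planar comparison triangle for the product triangle is exactly the paper's construction, and your Cauchy--Schwarz endgame is precisely the paper's closing step (``the triangle inequality implies $d(u,v)\le|\bu-\bv|+\sqrt{2}\,C$''). The one place you genuinely diverge is the synchronization step: the paper proves that the coordinate paths of a short path are traversed at \emph{almost the same relative rate} by a compactness argument (the function $f(p)=|(p_1,p_2)|+|(D_1-p_1,D_2-p_2)|$ attains its minimum $D$ only on the diagonal), which is non-quantitative for fixed endpoints; your argument via the $1$-Lipschitz partial-length curve and the ellipse estimate of \rf{L:r-uniq-geo} yields a quantitative $O(\sqrt{hL})$ bound and is, if anything, cleaner. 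Both versions degenerate when one factor distance $d(x^j,y^j)$ is small (the paper's claim likewise assumes $d(x_i,y_i)>0$ with the endpoints fixed), and both rely on the freedom to shrink the short function pointwise --- which is exactly why the theorem is stated for \hrcat(0) rather than \rcat(0), with \rf{C:infinitesimal} converting back.

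Concerning the step you leave open --- passing from approximate synchronization to the inequality $\sum_j|\bu_j-\bv_j|^2\le|\bu-\bv|^2$ for \emph{admissible} comparison points without degrading the constant --- you should know that the paper does not do more there than you do. It verifies the exactly-synchronized case, where, by \rf{L:product-rCAT0} applied to the two subpaths $[x,u]_h$ and $[u,y]_h$, a pair of admissible factor comparison points taken at a common fraction produces an admissible comparison point for the product; the comparison inequality is thus proved for \emph{those} comparison points, and the general case is dispatched with ``the theorem follows readily once we prove the following claim.'' If one carries out that reduction naively, each of the two passages --- from the constructed comparison points to arbitrary admissible ones (any two comparison points for the same point differ by at most $h$), and from synchronized triangles to general ones --- costs an additive error that tends to $0$ with $h$ but is positive for each fixed $h=H(x,y,z)$, so what drops out cleanly is that $X$ is $(\sqrt{2}\,C+\e)$-\hrcat(0) for every $\e>0$. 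In other words, your diagnosis of where the difficulty sits is exactly right, and it is the tersest point of the published argument as well; relative to the paper your proposal is missing no idea, only the same compressed final step, which you at least flag honestly. To finish, mirror the paper: prove the synchronized case first, then invoke your quantitative synchronization estimate for the reduction.
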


To prove the above theorem, we first need a lemma.

\begin{lem}
\label{L:product-rCAT0} Suppose $(X,d)$ is the $l^2$-product of two length
spaces $(X_1,d)$ and $(X_2,d)$. If $\g=(\g_1,\g_2):[0,T]\to X_1\times X_2$
is a path in $X$, then
\begin{equation} \label{E:l2-path}
\len(\g)\ge\sqrt{(\len(\g_1))^2+(\len(\g_2))^2}\,,
\end{equation}
with equality if $\g_1$ and $\g_2$ are traversed at the same relative rate,
i.e.~if
\begin{equation} \label{E:l2-path-equality}
\len(\g_1)\len\(\g_2|_{[0,t]}\) = \len(\g_2)\len\(\g_1|_{[0,t]}\), \qquad
0<t<T\,.
\end{equation}
\end{lem}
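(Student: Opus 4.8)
The plan is to prove the length inequality $\len(\g)\ge\sqrt{(\len(\g_1))^2+(\len(\g_2))^2}$ by working directly from the definition of length as a supremum over partitions, reducing the claim to an elementary inequality for the individual segments of a partition, and then summing. The equality case will follow by examining when the key segment-wise inequality is an equality.

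First I would recall that for any path $\g$ in the $l^2$-product, $\len(\g)=\sup_P\sum_{k=1}^n d(\g(t_{k-1}),\g(t_k))$, where the supremum runs over all partitions $P:0=t_0<t_1<\dots<t_n=T$. Fix such a partition $P$. By definition of the product metric, each term satisfies
$$
d(\g(t_{k-1}),\g(t_k))=\sqrt{d(\g_1(t_{k-1}),\g_1(t_k))^2+d(\g_2(t_{k-1}),\g_2(t_k))^2}\,.
$$
Writing $a_k:=d(\g_1(t_{k-1}),\g_1(t_k))$ and $b_k:=d(\g_2(t_{k-1}),\g_2(t_k))$, the sum over $k$ is $\sum_k\sqrt{a_k^2+b_k^2}$, which is exactly the $l^2$-length of the polygonal path in $\R^2$ joining the points $(0,0),(a_1,b_1),(a_1+a_2,b_1+b_2),\dots$. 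The core step is the triangle inequality in $\R^2$ (equivalently, the Minkowski inequality for the Euclidean norm), which gives
$$
\sum_{k=1}^n\sqrt{a_k^2+b_k^2}\ge\sqrt{\Big(\sum_{k=1}^n a_k\Big)^2+\Big(\sum_{k=1}^n b_k\Big)^2}\,.
$$

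Next I would note that $\sum_k a_k\le\len(\g_1)$ and $\sum_k b_k\le\len(\g_2)$, since each is the total variation of the coordinate path over the partition $P$; combining with monotonicity of $(s,t)\mapsto\sqrt{s^2+t^2}$ in each variable yields $\sum_k d(\g(t_{k-1}),\g(t_k))\ge\sqrt{(\sum_k a_k)^2+(\sum_k b_k)^2}$, but this bound points the wrong way for taking a supremum. The cleaner route is therefore to bound $\len(\g)$ from below: for each partition, $\sum_k\sqrt{a_k^2+b_k^2}\ge\sqrt{(\sum a_k)^2+(\sum b_k)^2}$, and then refine the partition so that $\sum a_k\to\len(\g_1)$ and $\sum b_k\to\len(\g_2)$ simultaneously. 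Taking a common refinement of partitions nearly realizing each coordinate length, and using that refinement only increases $\sum a_k$ and $\sum b_k$, lets both coordinate sums approach their respective lengths at once, giving $\len(\g)\ge\sqrt{(\len(\g_1))^2+(\len(\g_2))^2}$ in the limit.

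For the equality case under hypothesis \rf{E:l2-path-equality}, I would argue that when the two coordinates are traversed at the same relative rate, the vectors $(a_k,b_k)$ arising from any partition are all positively proportional to the fixed direction $(\len(\g_1),\len(\g_2))$, so the triangle inequality in $\R^2$ used above is an equality for every partition; hence $\sum_k\sqrt{a_k^2+b_k^2}=\sqrt{(\sum a_k)^2+(\sum b_k)^2}$ exactly, and passing to the supremum turns \rf{E:l2-path} into an equality. \textbf{The main obstacle} is the direction of the inequality when passing to the supremum over partitions: a single partition only yields a lower bound $\sqrt{(\sum a_k)^2+(\sum b_k)^2}$ whose coordinate sums need not be close to the true coordinate lengths, so the delicate point is arranging, via a common refinement, that both $\sum a_k$ and $\sum b_k$ approach $\len(\g_1)$ and $\len(\g_2)$ along the same sequence of partitions.
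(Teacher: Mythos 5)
Your proposal follows exactly the same route as the paper's proof of this lemma: reduce \rf{E:l2-path} to the triangle inequality in $\R^2$ applied to the increment vectors of a partition, then pass to the supremum over partitions. Indeed, you spell out the common-refinement step that the paper compresses into ``we deduce \rf{E:l2-path}'', and that part of your argument is correct and complete.

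In the equality case, however, your key claim --- that under \rf{E:l2-path-equality} the chord vectors $(a_k,b_k)$ of \emph{every} partition are positively proportional to $(\len(\g_1),\len(\g_2))$ --- is false, because \rf{E:l2-path-equality} constrains lengths of subpaths, not distances between sampled points, and a coordinate path may backtrack. Concretely, in $\R\times\R$ let $\g_1(t)=t$ on $[0,2]$ and let $\g_2$ run from $0$ to $1$ and back to $0$ at unit speed; then \rf{E:l2-path-equality} holds (both $\len(\g_i|_{[0,t]})=t$), yet the partition $\{0,\tfrac32,2\}$ yields increments $(\tfrac32,\tfrac12)$ and $(\tfrac12,\tfrac12)$, which are not parallel, so the planar triangle inequality is strict for that partition and your asserted identity $\sum_k\sqrt{a_k^2+b_k^2}=\sqrt{(\sum_k a_k)^2+(\sum_k b_k)^2}$ fails. (To be fair, the paper's own proof words this step the same way, so it shares the imprecision.) The repair stays entirely within your framework: set $A_k:=\len(\g_1|_{[t_{k-1},t_k]})$ and $B_k:=\len(\g_2|_{[t_{k-1},t_k]})$. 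The hypothesis \emph{does} force $(A_k,B_k)$ to be proportional to $(\len(\g_1),\len(\g_2))$ (assuming $\len(\g_1)>0$; if $\len(\g_1)=0$ the lemma is immediate), and since $a_k\le A_k$, $b_k\le B_k$, every partition satisfies
$$
\sum_k\sqrt{a_k^2+b_k^2}\;\le\;\sum_k\sqrt{A_k^2+B_k^2}
\;=\;\sqrt{(\len(\g_1))^2+(\len(\g_2))^2}\,,
$$
where the last equality uses additivity of length. Taking the supremum gives $\len(\g)\le\sqrt{(\len(\g_1))^2+(\len(\g_2))^2}$, which combined with \rf{E:l2-path} yields the asserted equality.
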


\begin{proof}
The triangle inequality for the Euclidean plane immediately gives the
following inequality for non-negative numbers $a_i$, $b_i$, $1\le i\le n$:
$$
\sum_{i=1}^n \sqrt{a_i^2+b_i^2}\ge
\sqrt{\(\sum_{i=1}^n a_i\)^2+\(\sum_{i=1}^n b_i\)^2}\,.
$$

By taking $a_i:=d(\g_1(t_i),\g_1(t_{i-1}))$ and
$b_i:=d(\g_2(t_i),\g_2(t_{i-1}))$ in the above inequality, where the numbers
$0=t_0\le t_1\le\dots\le t_n=T$ form a partition of $[0,T]$, we deduce
\rf{E:l2-path}.

If $\g_1$ and $\g_2$ are traversed at the same relative rate, then the
vectors $(a_i,b_i)$ defined in the last paragraph are positive scalar
multiples of each other, so we get equality in the planar triangle
inequality, which upon taking a supremum over all such partitions gives
equality in \rf{E:l2-path}.

If the paths are not traversed at the same relative rate then we split $\g$
into two subpaths $\g^i=(\g^i_1,\g^i_2)$, $i=1,2$, where
$\g^1=\g|_{[0,T_1]}$, $\g^2=\g|_{[T_1,T]}$, and $0<T_1<T$ is such that the
equation in \rf{E:l2-path-equality} fails for $t=T_1$. Letting
$a_i=\len(\g^i_1)$ and $b_i=\len(\g^i_2)$, it follows that $(a_1,b_1)$ and
$(a_2,b_2)$ are not scalar multiples of a single vector, and so
\begin{align*}
\len(\g) = \sum_{i=1}^2\len(\g^i) \ge \sum_{i=1}^2\sqrt{a_i^2+b_i^2}
  &>\sqrt{\(\sum_{i=1}^2 a_i\)^2+\(\sum_{i=1}^2 b_i\)^2} \\[.8em]
  &\mathstrut= \sqrt{(\len(\g_1))^2+(\len(\g_2))^2}\,.
\end{align*}
\end{proof}

We are now ready to prove \rf{T:product-rCAT0}. Note that it follows
implicitly from the following proof that the ``if'' clause for equality in
\rf{L:product-rCAT0} is actually an ``if and only if''.

\begin{proof}[Proof of \rf{T:product-rCAT0}]
Suppose $a=(a_1,a_2)$, $b=(b_1,b_2)$ are a pair of points in $X$. Suppose
$\g_i$ is a rectifiable path from $a_i$ to $b_i$ of length $L_i$, $i=1,2$.
By reparametrization if necessary, we assume that $\g_i$ is of constant
speed, and then define $\g=(\g_1,\g_2)$. It follows from
\rf{L:product-rCAT0} that $\len(\g)=\sqrt{L_1^2+L_2^2}$. Since $X_i$ is a
length space, we can choose $\g_i$ so that $L_i$ is arbitrarily close to
$d(a_i,b_i)$, $i=1,2$, and it then follows that $\len(\g)$ is arbitrarily
close to $d(a,b)$. Thus $X$ is a length space.

Letting $a,b\in X$ be as above, it follows from \rf{L:product-rCAT0} that if
$\g=(\g_1,\g_2)$ is a $h$-short path from $a$ to $b$, then $\g_i$ is a
$h'$-short path from $a_i$ to $b_i$, $i=1,2$, where $h'>0$ depends only on
$d(a_1,b_1)$, $d(a_2,b_2)$, and $h$, with $h'\to 0$ as $h\to 0$ (for fixed
$a$, $b$).

Suppose now that we are given points $x,y,z\in X$, a $h$-short triangle
$T:=T_h(x,y,z)$, and points $u,v\in X$ on different sides of $T$. By
projecting this data onto $X_i$, $i=1,2$, it follows that we get an
associated $h'$-short triangle $T_i:=T_h(x_i,y_i,z_i)$, and points
$u_i,v_i\in X_i$ on different sides of $T_i$; here $h'>0$ depends only on
the distances between pairs of vertices of $T_i$, $i=1,2$, and on $h$, with
$h'\to 0$ as $h\to 0$ (for fixed $x,y,z$). We assume that the positive
number $h$, and hence $h'$, is sufficiently small to guarantee that the
$C$-rough CAT(0) condition holds for $T_1$ and $T_2$.

Suppose now that for each side $\g=(\g_1,\g_2)$ of $T$, the projected paths
$\g_1$ and $\g_2$ are traversed at the same relative rate. Let $u$ be on the
side $[x,y]_h$ and let $v$ be on the side $[y,z]_h$. The $C$-rough CAT(0)
condition applied to the projected pairs of points gives
$d(u_i,v_i)\le|\bu_i-\bv_i|+C$, where $\bu_i$, $\bv_i$ are comparison points
for $u_i,v_i$ on the comparison triangle $\BT{i}=T(\bx_i,\by_i,\bz_i)$,
$i=1,2$. It follows readily that if we define $\bT=T(\bx,\by,\bz)$, where
$\bx=(\bx_1,\bx_2)$, etc., if we define $\bu$, $\bv$ analogously, and if we
identify the plane in $\R^4$ containing $\bT$ with $M^2_0$, then $\bT$ is a
comparison triangle for $T$; $\bu$, $\bv$ are comparison points for $u$,
$v$; and the triangle inequality implies that $d(u,v)\le|u-v|+\sqrt{2}\,C$,
as required.

In view of the above, the theorem follows readily once we prove the
following claim: if we fix a pair of points $x=(x_1,x_2),y=(y_1,y_2)\in X$
with $d(x_i,y_i)>0$ for $i=1,2$, and we pick a $h$-short path
$\g=(\g_1,\g_2):[0,1]\to X$ from $x$ to $y$, then $\g_1$ and $\g_2$ are
traversed at \emph{almost the same relative rate}. More precisely, if we
define $L(t;\g_i):=\len(\g_i|_{[0,t]})/\len(\g_i)$, then for all numbers
$0\le t\le 1$ and for our fixed pair of points $x,y$, we claim that there
exists $\e$ dependent only on $h$ such that $|L(t;\g_1)-L(t;\g_2)|<\e$, and
such that $\e\to 0$ as $h\to 0$.

Let $\cF$ be the set of all rectifiable paths from $x$ to $y$, let
$D:=d(x,y)$, let $D_i:=d(x_i,y_i)$, and let $D(t,\g_i)=d(\g_i(t),x_i)$, for
$i=1,2$. Since $\g$ is $h$-short, and so $\g_i$ are $h'$-short, with $h'\to
0$ as $h\to 0$, the claim follows if we prove that $(D(t;\g_1),D(t;\g_2))$
stays uniformly close to the main diagonal of the rectangle
$[0,D_1]\times[0,D_2]$.

Given $\g=(\g_1,\g_2)\in\cF$, we define a path $\la_\g:[0,1]\to[0,1]^2$ by
the equation $\la_\g(t)=(D(t;\g_1),D(t;\g_2))$. Note that $\la_\g$ is a path
from $(0,0)$ to $(D_1,D_2)$ and we need to show that this path remains close
to the diagonal (with a tolerance tending to $0$ as $h\to 0$).

Given $p:=(p_1,p_2)\in[0,1]^2$, let $\cF_p$ be the set of all paths
$\nu\in\cF$ such that $\la_\nu(t)=p$, for some point $t\in[0,1]$. We denote
the associated value of $t$ as $t(p,\nu)$; note that $t(p,\nu)$ may not be
unique, but any non-uniqueness corresponds only to a harmless choice of a
point in a subinterval of $[0,1]$ on which $\nu$ remains stationary so, for
the sake of having a fixed definition, we choose $t(p,\nu)$ to be the
smallest number with the above defining property. Cutting $\nu\in\cF_p$ into
two subpaths $\nu^1,\nu^2$ at the point $t(p,\nu)$, we see that
$$
\len(\nu) = \len(\nu^1)+\len(\nu^2) \ge |(p_1,p_2)|+(D_1-p_1,D_2-p_2)|
  =:f(p)\,.
$$
Note that the function $f:[0,D_1]\times[0,D_2]\to\R$ defined above is
continuous and it takes on its minimum value $|(D_1,D_2)|=D$ only when $p$
lies on the main diagonal of its rectangular domain. By compactness it
readily follows that the minimum value outside any given neighborhood of the
main diagonal is strictly larger than $D$. The claim follows.
\end{proof}

Since the class of rCAT(0) spaces are preserved by taking $l^2$-products, it
is easy to produce an rCAT(0) space that is neither CAT(0) nor Gromov
hyperbolic by taking the $l^2$-product of a Gromov hyperbolic space that is
not CAT(0) and a CAT(0) space that is not Gromov hyperbolic. The simplest
such example is the product of the unit circle and the Euclidean plane.

\bigskip

We now consider spaces obtained by gluing a pair of length spaces
$(X_i,d_i)$, $i=1,2$, along isometric closed subspaces $S_i\subset X_i$,
$i=1,2$ where $f_i:S\to S_i$ are isometries from some fixed metric space
$(S,d_S)$ to $(S_i,d_i|_{S_i})$. This means that we are creating a new space
$X=X_1\sqcup_S X_2$ as the quotient of the disjoint union of $X_1$ and $X_2$
under the identification of $f_1(s)$ with $f_2(s)$ for each $s\in S$. The
{\it glued metric} $d$ on $X$ is defined by the equations $d|_{X_i\times
X_i}=d_i$, $i=1,2$, and
$$
d(x_1,x_2)=\inf_{s\in S} (d_1(x_1,f_1(s))+d_2(f_2(s),x_2))\,, \qquad
  x_1\in X_1,\; x_2\in X_2\,.
$$
Then $d$ is also a length metric \cite[I.5.24]{BH}. For simplicity of
notation, we identify $X_1$, $X_2$, and $S$ with the naturally associated
subspaces of $X$, so that $S=X_1\cap X_2$.

\begin{thm}\label{T:glue1}
If $X=X_1\sqcup_S X_2$ where $(S,d_S)$ is of diameter $D<\infty$ and
$(X_i,d_i)$ is a $C$-rCAT(0) space for $i=1,2$, then $X$ is a $C'$-rCAT(0)
space for some $C'=C'(C,D)$.
\end{thm}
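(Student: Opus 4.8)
The plan is to verify the \hrcat(0) condition directly for $X$, using a short function that we are free to choose pointwise as small as we like, and then to invoke \rf{C:infinitesimal} to obtain the asserted \rcat(0) conclusion $C'=C'(C,D)$. Recall that the glued metric $d$ is itself a length metric, so $h$-short triangles exist in $X$. Fix a basepoint $o\in S$; since $\operatorname{diam}(S)=D$, every point of $S$ lies within $D$ of $o$, and every path joining $X_1\sm S$ to $X_2\sm S$ must meet $S$. The governing principle is that, up to additive errors that are bounded multiples of $D$, the set $S$ behaves like the single point $o$, so that $X$ behaves like the one-point gluing of the two \hrcat(0) pieces.

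The first step is an \textbf{excursion estimate}. If $a,b$ lie in the same factor, say $a,b\in X_1$, and $\g$ is an $h$-short segment from $a$ to $b$, then $\g$ is a concatenation of arcs in $X_1$ and ``bubbles'' lying in $X_2$ whose two endpoints lie on $S$. A short computation from $\len(\g)\le d(a,b)+h=d_1(a,b)+h$, together with $d_1|_S=d_2|_S=d_S$ and the triangle inequality in $X_1$, shows that the total excess of all bubble lengths over the corresponding $S$-chord lengths is at most $h$; in particular each individual bubble has length at most $D+h$ and hence is confined to the $(D+h)$-ball about its entry point in $S$. With the short function taken small, every such short segment therefore stays within $D+h$ of the relevant factor except on these confined bubbles.

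With this in hand I would split into cases by the distribution of $x,y,z$ between $X_1$ and $X_2$; by symmetry it suffices to treat \textbf{Case A}, all three in $X_1$, and \textbf{Case B}, two in $X_1$ and one in $X_2$. In Case A I replace each bubble of each side by an $X_1$-short segment joining the same pair of points of $S$ (of length at most $D+\e$), obtaining an $h'$-short triangle $T'$ \emph{inside} $X_1$ with the same vertices and $h'\le h+\e$. The comparison triangle is unchanged, since the vertex distances are unchanged; each point $u$ of an original side is moved by at most $2D+h+\e$ to its counterpart $u'$ on $T'$ (both lie within $D+h$ of a common point of $S$), and the associated comparison points move by $O(D)$. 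Applying the \hrcat(0) property of $X_1$ to $T'$ and $u',v'$ and feeding the displacements through the triangle inequality gives the inequality with $C'=C+O(D)$.

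The crux is \textbf{Case B}, with $x,y\in X_1$ and $z\in X_2$. Here the two sides meeting $z$ cross $S$; choose crossing points $p\in[x,z]_h\cap S$ and $q\in[y,z]_h\cap S$, so that $d(x,z)\approx d_1(x,p)+d_2(p,z)$, $d(y,z)\approx d_1(y,q)+d_2(q,z)$ (within $h$), and $d(p,q)\le D$. For the pair $u,v$ I subdivide by which factor each lies in: when both lie in $X_1$ I compare inside $X_1$ using a triangle on $x,y$ and a crossing point; when both lie in $X_2$ I compare inside $X_2$ using the triangle $T(z,p,q)$, which is \emph{thin} because its side $[p,q]$ has length at most $D+\e$; and when $u,v$ straddle $S$ I bound $d(u,v)\le d_1(u,p)+d_2(p,v)$ and estimate each term radially from $p$. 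In every subcase the estimate comes from the \hrcat(0) property of $X_1$ or $X_2$, and the remaining work is planar: I must show that the comparison distance $|\bu-\bv|$ computed in the single comparison triangle $T(\bx,\by,\bz)$ for $X$ dominates, up to $O(D)$, the comparison quantity arising in the component triangle, and that the two notions of comparison point agree up to $O(D)$. This reduces to a monotonicity statement as the ``bent'' path $x\to p\to z$ (with the bend controlled by the short chord $[p,q]$) is straightened into the edge $[\bx,\bz]$, which I expect to follow from \rf{L:Alex}. I expect this planar bookkeeping to be the main obstacle: unlike Case A, the comparison triangle for $X$ genuinely mixes distances from both factors and is a comparison triangle for no triangle living in a single factor, so the correspondence of comparison points must be established quantitatively, and it is precisely here that $\operatorname{diam}(S)=D<\infty$ is indispensable (the conclusion will fail for unbounded gluing sets, as \rf{X:glue} shows). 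Assembling the cases yields $C'=C+O(D)$ for the \hrcat(0) constant, and \rf{C:infinitesimal} then converts this to the stated \rcat(0) bound.
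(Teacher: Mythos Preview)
Your skeleton matches the paper's: the same excursion estimate for short paths between same-factor points, the same reduction to $h$-short triangles whose sides stay in the appropriate factor, and then a case split driven by which factor $u$ and $v$ lie in. The difficulty you flag---``the planar bookkeeping''---is exactly where the substance of the paper's proof lives, and here your proposal has a genuine gap.

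You say you expect the needed monotonicity (that comparison distances in the full $T(\bx,\by,\bz)$ dominate, up to $O(D)$, the comparison distances in the component-level triangles) to follow from Alexandrov's lemma \rf{L:Alex}. It does not. Alexandrov's lemma governs the operation of \emph{unbending a hinge}: two triangles sharing a side are straightened along that side, and the distance from the far vertex to points on the opposite side increases. What you need here is different: with $[x,y]$ fixed, \emph{shorten the other two sides} (from $d(x,z),d(y,z)$ to $d(x,p),d(y,p)$, or to $d(x,w),d(y,w)$ for a suitable $w$) and control how comparison distances between points on those two sides, or between one of them and $[x,y]$, change. The paper isolates this as a separate ``Zipper Lemma'' (\rf{L:zipper1}, with perturbed form \rf{L:zipper2}) and proves it by an explicit analysis using confocal conics in one case and differentiation of the cosine rule in the other; I do not see a cheap reduction to \rf{L:Alex}. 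Moreover, to make the hypothesis $\d_x=\d_y$ of the Zipper Lemma available in your ``both in $X_1$'' subcase (the paper's Case~(d)), one must first locate an auxiliary point $w$ on a short segment between the two crossing points $s_x,s_y$ at which $d(x,z)-d(x,w)$ and $d(y,z)-d(y,w)$ agree up to $O(h)$; this intermediate-value step is not in your outline. Similarly, your ``straddling'' subcase (the paper's Case~(b)) is handled in the paper not by a radial estimate from $p$ alone but via a comparison quadrilateral glued to a thin Euclidean triangle to form an auxiliary CAT(0) space. So the approach is right, but the planar monotonicity you need is a lemma in its own right, not a corollary of \rf{L:Alex}, and a couple of your subcases require additional constructions before that lemma can be applied.
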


A comparable gluing result for CAT(0) spaces $X_1$ and $X_2$ requires that
$S_i$ be convex in $X_i$ (meaning that it contains all geodesics in $X_i$
between every pair of points in $S_i$) and complete for $i=1,2$, but the
boundedness of $S$ is dropped. The conclusion is then that $X$ is CAT(0);
see \cite[II.11.1]{BH}.

Before proving \rf{T:glue1}, we need some elementary lemmas concerning
planar geometry. The first is a ``small perturbation'' result.

\begin{lem}\label{L:pert}
Suppose $T(x,y,z)$ and $T(x',y',z')$ are triangles in the Euclidean plane,
and that two of $|\;|x-z|-|x'-z'|\;|$, $|\;|y-z|-|y'-z'|\;|$, and
$|\;|x-y|-|x'-y'|\;|$ equals zero, with the third being at most
$h:=1/(1+(|x-y|\maxo|x-z|\maxo|y-z|)^2)$. Suppose also that $u\in[x,z]$,
$u'\in[x',z']$, $v\in[x,y]$, and $v'\in[x',y']$, with $|x-u|=|x'-u'|$ and
$|x-v|=|x'-v'|$. Then $|u'-v|\le |u-v|+2$.
\end{lem}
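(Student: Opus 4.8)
The plan is to put the two triangles into a common position sharing the vertex $x$, and then compare the two distances $|u-v|$ and $|u'-v|$ \emph{directly} through the law of cosines at $x$, rather than trying to bound the displacement $|u-u'|$ (which need not be small in near-degenerate configurations). First I would normalize by applying a rigid motion to $T(x',y',z')$ so that $x'=x$ and the rays $[x',y')$ and $[x,y)$ issue in the same direction. Since $|x-v|=|x'-v'|$ and $v,v'$ then lie on this common ray at the same distance from $x$, we get $v=v'$, and this is the configuration in which the inequality is read. Write $a=|x-z|$-type labels: set $a=|y-z|$, $b=|x-z|$, $c=|x-y|$ and $a',b',c'$ for the corresponding side lengths of the second triangle, with $M=a\maxo b\maxo c$. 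The hypothesis is that two of $|a-a'|$, $|b-b'|$, $|c-c'|$ vanish while the third is at most $h=1/(1+M^2)$.

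Next, let $r=|x-u|=|x'-u'|$ and $s=|x-v|=|x'-v'|$, and let $\theta=\angle yxz$, $\theta'=\angle y'x'z'$. Because $u\in[x,z]$ and $v\in[x,y]$, the angle $\angle uxv$ equals $\theta$, and after the normalization $\angle u'xv=\theta'$; so the law of cosines gives
\[
|u-v|^2=r^2+s^2-2rs\cos\theta, \qquad |u'-v|^2=r^2+s^2-2rs\cos\theta',
\]
whence $|u'-v|^2-|u-v|^2=2rs(\cos\theta-\cos\theta')$. Applying the law of cosines inside each triangle also yields $\cos\theta=(b^2+c^2-a^2)/(2bc)$ and $\cos\theta'=(b'^2+c'^2-a'^2)/(2b'c')$, and I would estimate $\cos\theta-\cos\theta'$ according to which side has been perturbed.

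In each of the three cases $\cos\theta-\cos\theta'$ factors as (perturbation)$\,\times\,$(bounded quantity)$\,/\,$(product of two sides); for instance when $b$ is the perturbed side one computes $\cos\theta-\cos\theta'=(b-b')(bb'+a^2-c^2)/(2c\,bb')$. Feeding this into $2rs(\cos\theta-\cos\theta')$ and using the elementary bounds $r\le b\mino b'$, $s\le c\mino c'$ together with the triangle-inequality facts $|a-b|\le c\mino c'$ and $|a-c|\le b\mino b'$ (which tame the apparently dangerous ratios such as $|a^2-c^2|/(b\maxo b')\le 2M$), one obtains in every case
\[
\big|\,|u'-v|^2-|u-v|^2\,\big|\le 3Mh=\frac{3M}{1+M^2}\le\frac32 .
\]
The role of the factor $M^2$ in the definition of $h$ is precisely to absorb the linear growth of the side lengths, making the estimate uniform in the size of the triangle. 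Finally, putting $p=|u-v|$ and $q=|u'-v|$, the inequality $q^2\le p^2+\tfrac32$ together with $(p+2)^2=p^2+4p+4\ge p^2+\tfrac32$ gives $q\le p+2$, which is the assertion.

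The main obstacle is the estimate of $\cos\theta-\cos\theta'$: a naive bound divides by a single side length that may be small and so blows up, and indeed $|u-u'|$ itself can be large when the triangles are nearly collinear. The resolution is to keep the \emph{product} of two side lengths in the denominator paired against the numerator through the $\mino/\maxo$ inequalities above, so that no lone small side is ever left in a denominator; comparing the two distances to the common point $v$ (instead of bounding $|u-u'|$) is exactly what makes this pairing available, and is why the bound survives the degenerate configurations.
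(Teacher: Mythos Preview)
Your approach is correct and is essentially the same as the paper's: both proofs compute $|u-v|^2$ and $|u'-v'|^2$ explicitly (you via the law of cosines at $x$, the paper via the cevian formula \rf{E:plane CAT0}, which amounts to the same thing) and bound the difference case by case, exploiting that $h$ carries a factor of $1/M^2$ to absorb the linear growth in the side lengths. One small bookkeeping slip: in the case where the \emph{opposite} side $a$ is perturbed, $2rs(\cos\theta-\cos\theta')=rs(a'^2-a^2)/(bc)$ gives the bound $h(a+a')\le h(2M+h)$ rather than $3Mh$, and this can exceed $3Mh$ for small $M$; however $h(2M+h)=2M/(1+M^2)+1/(1+M^2)^2\le 3/2$ still holds, so your conclusion $q^2\le p^2+3/2$ and hence $q\le p+2$ is unaffected.
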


\begin{proof}
We write $a=|x-z|$, $a'=|x'-z'|$, $b=|y-z|$, $b'=|y'-z'|$, $c=|x-y|$,
$c'=|x'-y'|$, $d=|z-v|$, $d'=|z'-v'|$, $e=|u-v|$, and $e'=|u'-v'|$,
$l=|x-u|$. Since two of the three sidelengths are preserved, we may assume
by symmetry between $y$ and $z$ that $c=c'$. Define the numbers
$s,t,t'\in[0,1]$ by $t=l/a$, $t'=l/a'$, and $s=|x-v|/c$. We assume that
$a\maxo a'\ge 1$, since otherwise the result follows trivially from the
triangle inequality. Thus
$$ |t-t'| \le hl/aa' \le h/(a\maxo a') \le h\,. $$

Using \rf{E:plane CAT0}, we get the following four equations, which we use
implicitly in the rest of the proof:
\begin{align*}
d^2    &= (1-s)a^2 + sb^2 - s(1-s)c^2\,,              \\
(d')^2 &= (1-s)(a')^2 + s(b')^2 - s(1-s)c^2\,,        \\
e^2    &= (1-t)(sc)^2 + td^2 - t(1-t)a^2\,,           \\
(e')^2 &= (1-t')(sc)^2 + t'(d')^2 - t'(1-t')(a')^2\,. \\
\end{align*}
Note that $t(1-t)a^2=l(a-l)$ and similarly $t'(1-t')(a')^2=l(a'-l)$. It is
readily verified that $h\le 1/(1\maxo a'\maxo b'\maxo c')^2$, and trivially
$h\le 1/(1\maxo a\maxo b\maxo c)^2$.

If $a=a'$ and $b'\le b$, then it follows from the above equations that
$d'\le d$ and $e'\le e$, so we are done. If $a=a'$ and $b<b'\le b+h$, we see
that $(d')^2=d^2+2sbh+sh^2\le d^2+3$, and hence that $(e')\le e^2+3$. Thus
$e'\le e+\sqrt 3$ in this case.

Suppose instead that $b=b'$ and $a-h\le a'\le a$. Then $(d')^2\le d^2$, and
so
$$ (e')^2 \le e^2 + (t'-t)d^2 + l(a-a')\le e^2+2\,. $$
In the last inequality, we used the estimate $(t'-t)d^2\le 1$, which in turn
follows from the earlier estimate $|t'-t|\le h$ and the fact that $d\le
a\maxo b$. We deduce that $e'\le e+\sqrt 2$ in this case.

Lastly, suppose that $b=b'$ and $a<a'\le a+h$. Then $(d')^2\le
d^2+(1-s)(ah+h^2)\le d^2+3$, and as in the previous case
$$ (e')^2 \le e^2 + 3t' + (t-t')(sc)^2 \le e^2 + 4\,. $$
Thus $e'\le e+2$ in this case.
\end{proof}

We now state a lemma that we call the {\it Zipper Lemma} because in the
important case $\d_x=\d_y>0$, we get one triangle from another by ``zipping
up'' two sides (shortening them by the same amount).

\begin{lem}\label{L:zipper1}
Suppose $x,y,z,z',u,u'$ are points in the Euclidean plane and write
$\d_x:=|x-z|-|x-z'|$ and $\d_y:=|y-z|-|y-z'|$. Suppose also that
$u\in[x,z]$, $u'\in[x,z']$, and $|x-u|=|x-u'|$. Then
\begin{enumerate}
\item If $v\in[x,y]$ and $|\d_x|\le\d_y$ then $|u'-v|\le|u-v|$.
\item If $v\in[y,z]$ and $v'\in[y,z']$ with $|y-v|=|y-v'|$ and
    $\d_x=\d_y\ge 0$, then $|u'-v'|\le|u-v|$.
\end{enumerate}
\end{lem}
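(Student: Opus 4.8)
The plan is to prove both parts by carrying the $z$-configuration to the $z'$-configuration along a one-parameter family and checking that the relevant squared distance is non-increasing along it; in part (a) this is cleanest after first eliminating the points on the two moving sides via the law of cosines, and in part (b) it is cleanest as a genuine geometric ``zip''.

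For part (a), I would first reduce to an angle comparison. Since $v\in[x,y]$ lies on the ray $xy$ while $u\in[x,z]$ and $u'\in[x,z']$ lie on the rays $xz,xz'$ at the common distance $l:=|x-u|=|x-u'|$ from $x$, writing $\theta:=\angle yxz$ and $\theta':=\angle yxz'$ gives $|u-v|^2=l^2+|x-v|^2-2l|x-v|\cos\theta$ and the same formula with $\theta'$; hence $|u'-v|\le|u-v|$ is equivalent to $\cos\theta'\ge\cos\theta$. To prove this, set $c:=|x-y|$, $p:=|x-z|$, $q:=|y-z|$, $p':=|x-z'|$, $q':=|y-z'|$, so $(p,q)$ and $(p',q')$ lie in the region $R=\{(p,q):p,q>0,\ |p-q|\le c\le p+q\}$ of admissible side-triples, which is an intersection of half-planes and hence convex. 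I would move along the segment $(p(s),q(s))=(1-s)(p,q)+s(p',q')$, $s\in[0,1]$, which stays in $R$ by convexity, and regard $\cos\theta=\tfrac{c^2+p^2-q^2}{2cp}$ as a function of $(p,q)$. A direct differentiation (using $\dot p=-\delta_x$, $\dot q=-\delta_y$) gives $\tfrac{d}{ds}\cos\theta=\tfrac{1}{cp}\left[q\,\delta_y-\tfrac{p^2+q^2-c^2}{2p}\,\delta_x\right]$, and since every point of $R$ satisfies $|p^2+q^2-c^2|\le 2pq$ (this is exactly $|\cos\angle xzy|\le1$), the hypothesis $|\delta_x|\le\delta_y$ forces the bracket to be non-negative. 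Thus $\cos\theta$ is non-decreasing along the segment, so $\cos\theta'\ge\cos\theta$ and $|u'-v|\le|u-v|$.

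For part (b), where $\delta_x=\delta_y=:\delta\ge0$, I would zip continuously. For $t\in[0,\delta]$ let $z_t$ be the point with $|x-z_t|=p-t$ and $|y-z_t|=q-t$ on the same side of the line $xy$ as $z$; such a point exists throughout, since $|(p-t)-(q-t)|=|p-q|\le c$ is constant and $(p-t)+(q-t)$ is affine in $t$ and $\ge c$ at both endpoints. Here $z_t$ sweeps an arc of the hyperbola $\{w:|x-w|-|y-w|=p-q\}$, with $z_0=z$ and $z_\delta$ equal to $z'$ or to its reflection in the line $xy$; as that reflection is an isometry fixing $x$ and $y$, it does not change $|u'-v'|$, which frees the argument from any assumption on which side of $xy$ the point $z'$ lies. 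Defining $u_t\in[x,z_t]$ and $v_t\in[y,z_t]$ at the fixed distances $l=|x-u|$ and $m=|y-v|$, and writing $\hat e_x,\hat e_y$ for the unit vectors from $x,y$ to $z_t$, differentiation of $|x-z_t|=p-t$ and $|y-z_t|=q-t$ shows the velocity $A:=\dot z_t$ satisfies $A\cdot\hat e_x=A\cdot\hat e_y=-1$, whence $\dot u_t=\tfrac{l}{p-t}(A+\hat e_x)$ and $\dot v_t=\tfrac{m}{q-t}(A+\hat e_y)$. Using $u_t-v_t=-\alpha\hat e_x+\beta\hat e_y$ with $\alpha:=(p-t)-l\ge0$ and $\beta:=(q-t)-m\ge0$ (non-negative because $l\le p'\le p-t$ and $m\le q'\le q-t$, as $u'\in[x,z']$ and $v'\in[y,z']$), the ``diagonal'' terms vanish since $\hat e_x\cdot(A+\hat e_x)=\hat e_y\cdot(A+\hat e_y)=0$, and I expect the remaining terms to collapse to $\tfrac12\tfrac{d}{dt}|u_t-v_t|^2=(g-1)\left(\tfrac{l\beta}{p-t}+\tfrac{m\alpha}{q-t}\right)$, where $g=\hat e_x\cdot\hat e_y=\cos\angle xz_ty\le1$. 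As all other factors are non-negative this is $\le0$, so $|u_t-v_t|$ is non-increasing and $|u'-v'|=|u_\delta-v_\delta|\le|u_0-v_0|=|u-v|$.

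The computations themselves (the law-of-cosines reduction, the $s$-derivative in (a), and the collapse of the $t$-derivative in (b)) are mechanical; the part that needs care is the bookkeeping at degenerate configurations. In (a) the segment may touch the boundary of $R$, where the triangle degenerates and $\theta\in\{0,\pi\}$, so I would note that $\cos\theta$ is continuous on $R$ and piecewise differentiable with non-negative derivative, hence non-decreasing. In (b) the deformation can fail to be smooth only where $z_t$ meets the line $xy$; but $(p-t)+(q-t)$ is affine and $\ge c$, so this can occur only at the endpoint $t=\delta$, and the monotonicity then extends by continuity. I expect these boundary cases, together with justifying that $z_t$ stays on one side and the reflection identity, to be the only genuinely delicate points.
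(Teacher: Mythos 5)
Your proof is correct, and part (a) follows a genuinely different route from the paper's. Both arguments begin with the same law-of-cosines reduction to the angle comparison $\angle yxz'\le\angle yxz$, but the paper then proceeds by cases: it first notes that moving $z'$ straight toward $y$ only decreases the angle at $x$, which reduces the hypothesis $|\delta_x|\le\delta_y$ to the two extreme cases $\delta_y=\delta_x>0$ and $\delta_y=-\delta_x>0$, and it handles the first by parametrizing the hyperbola with foci $x,y$ through $z$ and $z'$ (showing the angle decreases with $|x-z'|$) and the second by a separate calculus argument. Your single convex interpolation $(p(s),q(s))=(1-s)(p,q)+s(p',q')$, together with the observations that the admissible region $R$ is an intersection of half-planes and that $|p^2+q^2-c^2|\le 2pq$ on $R$, turns $|\delta_x|\le\delta_y$ directly into $\frac{d}{ds}\cos\theta\ge 0$; this is more unified (one computation instead of a reduction plus two extremal cases), it avoids the paper's rather terse reduction step, and your derivative formula and the resulting estimate check out. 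In part (b) your route is essentially the paper's: both shrink $|x-z|$ and $|y-z|$ at unit speed and show that the derivative of $|u_t-v_t|^2$ is nonpositive; the paper does this scalar-wise, differentiating the cosine rule at the apex $z$ for the triangles $T(x,y,z)$ and $T(u,v,z)$ and combining into a manifestly nonpositive expression, while you do it with velocity vectors, and your collapse of the derivative to $(g-1)\left(\frac{l\beta}{p-t}+\frac{m\alpha}{q-t}\right)$ is indeed correct. The only loose ends are the degenerate configurations you flag, and all are harmless: in (a), $c=0$ forces $v=x=y$, and $p=0$ or $p'=0$ forces $u=u'=x$, so the claim is trivial; in (b), besides the degeneracy $(p-t)+(q-t)=c$ (which, as you say, occurs only at $t=\delta$), there is the collinear case $|p-q|=c$, which your ``only at $t=\delta$'' remark does not cover since then $z_t$ lies on the line through $x,y$ for every $t$ --- but there your formula still applies with $g\equiv 1$, so the derivative vanishes and $|u_t-v_t|$ is constant.
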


\begin{proof}
By the Cosine Rule applied to the triangles $T(x,u',v)$ and $T(x,z',y)$, it
is clear that the distance from $u'$ to $v\in[x,y]$ decreases as we move
$z'$ directly towards $y$ while keeping $x$, $y$ and $z$ fixed, since both
are associated with the (common) angle at $x$ in both triangles decreasing.
Thus it suffices to prove that the angle at $x$ in the triangle $T(x,z',y)$
is smaller than the angle at $x$ in the triangle $T(x,z,y)$ in the special
cases $\d_y=\d_x>0$ and $\d_y=-\d_x>0$.

We first prove (a) for $\d_x=\d_y>0$. Without loss of generality, we assume
that $x,y$ are given in Cartesian coordinates by $(c,0)$ and $(-c,0)$,
respectively. Let
$$ 2a:= |\;|z-x|-|z-y|\;| = |\;|z'-x|-|z'-y|\;|\,,$$
so that $a\le c$. The lemma is clear if either $a=c$ or $a=0$, so we assume
that $0<a<c$ and write $b=\sqrt{c^2-a^2}$ and $e=c/a$. Thus $z$ and $z'$
both lie on one branch of the hyperbola
$$ \frac{w_1^2}{a^2} - \frac{w_2^2}{b^2} = 1\,,$$
where $(w_1,w_2)$ are the Cartesian coordinates of a point $w$ on this
hyperbola.

We assume for now that $z,z'$ lie on the right branch of this hyperbola,
i.e.~that $|z-x|<|z-y|$. Let $r=|z-x|$ and let $\t$ be the angle at $x$ in
the triangle $T(x,y,z)$. Then $z=(z_1,z_2)$ satisfies the equation
$$ r^2 = (z_1-ae)^2+z_2^2 = (z_1-ae)^2+(e^2-1)(z_1^2-a^2) = (ez_1-a)^2\,, $$
and so $r=ez_1-a$, since we are on the right branch of the hyperbola. Also
$z_1=r\cos(\pi-\t)+ae=ae-r\cos\t$, and so $r=e(ae-r\cos\t)-a$. Rearranging
this equation we get
$$ r = \frac{a(e^2-1)}{1+e\cos\t}\,. $$
It is clear from this equation that the angle $\t$ decreases as $r$
decreases, so we are done.

If instead $|z-x|>|z-y|$, the analysis is similar except that now
$r=a-ez_1$, and so we instead get
$$ r = \frac{a(e^2-1)}{-1+e\cos\t}\,, $$
and again it is clear that $\t$ decreases as $r$ decreases.

We now prove (a) for $\d_y=-\d_x>0$. We could do this in a similar manner to
the proof for $\d_y=\d_x$ above, but using an ellipse rather than a
hyperbola. However we will instead give a slightly shorter calculus proof.
Let $a:=|x-z|$, $b:=|y-z|$, and $c:=|x-y|$, and let $\t$ be the angle at $x$
in the triangle $T(x,y,z)$. The desired conclusion is obvious in the
degenerate cases $b=a+c$ and $c=a+b$, and the degenerate case $a=b+c$ cannot
arise by the triangle inequality since $|x-z'|>a$ and $|y-z'|<b$. We may
therefore assume that we are in the non-degenerate case with $\sin \t>0$.
For the rest of this paragraph prime superscripts indicate derivatives with
respect to a parameter $t$. Specifically, holding $c$ fixed, and considering
$a=a(t)$, $b=b(t)$, and $\t=\t(t)$ to be functions of $t$ with $a'(t)=1$ and
$b'(t)=-1$, it suffices to show that $\t'(t)<0$ for all $0\le t<\d_y$. The
fact that the triangle is non-degenerate at $t=0$ implies that it is
non-degenerate for all $0\le t<\d_y$, and so $\sin \t(t)>0$ on $[0,\d_y)$.
Differentiating the equation $b^2 = a^2+c^2-2ac\cos \t$, we get
$$ -b(t) = a(t) - c(t)\cos \t(t) + a(t)c(t)\sin \t(t) \t'(t)\,, $$
and so
$$ \t'(t) = \frac{-a(t)-b(t)+c(t)\cos \t(t)}{a(t)c(t)\sin \t(t)}\,. $$
The desired inequality $\t'(t)<0$ follows easily for all $0\le t<\d_x$.

Finally we prove (b). Let $a:=|x-z|$, $b:=|y-z|$, and $c:=|x-y|$ as before,
and also let $p:=|z-u|$, $q:=|z-v|$, and $e:=|u-v|$. Without loss of
generality, we assume that $a,b,p,q>0$, $p<a$, and $q<b$. Again we use
calculus and reserve prime superscripts for $t$-derivatives below. Holding
$c$ fixed and taking $a'(t)=b'(t)=p'(t)=q'(t)=-1$, with $\t(t)$ being the
angle at $z$ for $T(x,y,z)$, it suffices to show that $e'(t)<0$.
Differentiating the Cosine Rule for the triangles $T(x,y,z)$ and $T(u,v,z)$
with respect to $t$, we get
\begin{align*}
0         &= (a(t)+b(t))(\cos \t(t)-1) + (a(t)b(t)\sin \t(t)) \t'(t)\,, \\
e(t)e'(t) &= (p(t)+q(t))(\cos \t(t)-1) + (p(t)q(t)\sin \t(t)) \t'(t)\,.
\end{align*}
Combining these equations, we get
$$
\frac{e(t)e'(t)}{p(t)q(t)} =
  \( \frac{1}{a(t)} + \frac{1}{b(t)} - \frac{1}{p(t)} - \frac{1}{q(t)} \)
  (1-\cos \t(t))\,,
$$
and so it is clear that $e'(t)\le 0$, as required.
\end{proof}

We now state a useful perturbation of the previous lemma. The proof is easy:
for (a), first apply \rf{L:pert} to lengthen $|z-x|$ by $h$, and then apply
\rf{L:zipper1}, and for (b), apply \rf{L:pert} twice and then
\rf{L:zipper1}.

\begin{lem}\label{L:zipper2}
Suppose $x,y,z,z',u,u'$ are points in the Euclidean plane and write
$\d_x:=|x-z|-|x-z'|$ and $\d_y:=|y-z|-|y-z'|$. Suppose also that
$u\in[x,z]$, $u'\in[x,z']$, and $|x-u|=|x-u'|$ and we write and
$h:=1/(1+(|x-y|\maxo|x-z|\maxo|y-z|)^2)$. Then
\begin{enumerate}
\item If $v\in[x,y]$ and $|\d_x|\le\d_y+h$ then $|u'-v|\le|u-v|+2$.
\item If $v\in[y,z]$ and $v'\in[y,z']$ with $|y-v|=|y-v'|$ and
    $\d_x+h_1=\d_y+h_2\ge 0$ for some $0\le h_1,h_2\le h$, then
    $|u'-v'|\le|u-v|+4$.
\end{enumerate}
\end{lem}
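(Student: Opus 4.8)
The plan is to reduce each part to the exact Zipper Lemma \rf{L:zipper1}, first correcting the small discrepancy between $\d_x$ and $\d_y$ by means of the perturbation estimate \rf{L:pert}, and always \emph{lengthening} the side we perturb so that the auxiliary comparison points are guaranteed to exist.

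For (a) the point $v$ lies on the base $[x,y]$, which is exactly the configuration of \rf{L:pert} (apex $x$, with $u$ on the incident side $[x,z]$ and $v$ on the base). If $|\d_x|\le\d_y$ I simply invoke \rf{L:zipper1}(a) and obtain the stronger bound $|u'-v|\le|u-v|$. Otherwise $\d_y<|\d_x|\le\d_y+h$, and, according to the sign of $\d_x$, I lengthen one of the two $x$-incident sides by at most $h$. In the band $\d_x<-\d_y$ I introduce $\hat z$ with $|x-\hat z|=|x-z|+(-\d_y-\d_x)$ and $|y-\hat z|=|y-z|$; then $\hat\d_x=-\d_y$ and $\hat\d_y=\d_y$ satisfy the hypothesis of \rf{L:zipper1}(a), which gives $|u'-v|\le|\hat u-v|$, while \rf{L:pert} gives $|\hat u-v|\le|u-v|+2$. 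In the band $\d_x>\d_y$ I instead lengthen $|x-z'|$ by $\d_x-\d_y\le h$ to a point $\hat z'$, so that \rf{L:zipper1}(a) (with $\hat z'$ as the primed vertex) yields $|\hat u'-v|\le|u-v|$ and \rf{L:pert}, applied with the two triangles interchanged, yields $|u'-v|\le|\hat u'-v|+2$. Degenerate cases in which $\d_y$ is negative or tiny reduce directly to the triangle inequality, exactly as in the trivial case of \rf{L:pert}. In every case $|u'-v|\le|u-v|+2$.

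For (b) the equality $\d_x+h_1=\d_y+h_2=:\d\ge0$ is precisely what \rf{L:zipper1}(b) needs once the two defects are made to coincide, so I move $z$ to a vertex $\hat z$ with $|x-\hat z|=|x-z|+h_1$ and $|y-\hat z|=|y-z|+h_2$ (a pure lengthening of both incident sides), and define $\hat u\in[x,\hat z]$, $\hat v\in[y,\hat z]$ by preserving $|x-u|$ and $|y-v|$ (these exist because both sides were lengthened). Then $\hat\d_x=\d_x+h_1=\d=\d_y+h_2=\hat\d_y\ge0$, so \rf{L:zipper1}(b) gives $|u'-v'|\le|\hat u-\hat v|$. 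It then remains to prove $|\hat u-\hat v|\le|u-v|+4$, which I obtain by factoring the passage $z\to\hat z$ through an intermediate vertex $z^{\dagger}$ that lengthens only $|x-z|$ (by $h_1$) and then only $|y-z|$ (by $h_2$), applying \rf{L:pert} to each single-side step and adding the two resulting $+2$'s.

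The genuinely delicate step, and the main obstacle, is this last two-step estimate in (b). Unlike in (a), \emph{both} tracked points $u$ and $v$ lie on sides through the vertex $z$ that is being moved, so neither sits on the base and \rf{L:pert} cannot be quoted verbatim. At each single-side step I must recast \rf{L:pert} with the moving vertex playing the role of the apex—using the $y\leftrightarrow z$ symmetry already invoked in the proof of \rf{L:pert}—and check that the point not lying on the perturbed side is carried correctly and that the distances used to define $\hat u$ and $\hat v$ are exactly those consumed by \rf{L:zipper1}(b) at the end. Making these two pieces of bookkeeping agree, so that the two applications of \rf{L:pert} chain cleanly into the single \rf{L:zipper1}(b) estimate with total additive error $2+2=4$, is the crux; the remaining verifications (that every perturbation is a lengthening bounded by $h$, hence that all comparison points exist and \rf{L:pert} applies with constant $2$) are routine.
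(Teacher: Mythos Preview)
Your plan is the paper's plan: perturb one side at a time via \rf{L:pert} until the exact hypotheses of \rf{L:zipper1} hold, then invoke that lemma.  Two comments.

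For (a) the paper avoids your case split by a single uniform move: lengthen $|y{-}z|$ by $h$ (the paper's text reads ``$|z{-}x|$'', which appears to be a slip).  Then $\hat\d_x=\d_x$ and $\hat\d_y=\d_y+h$, so the hypothesis $|\d_x|\le\d_y+h$ is precisely $|\hat\d_x|\le\hat\d_y$; one call to \rf{L:zipper1}(a) gives $|u'-v|\le|\hat u-v|$, and one call to \rf{L:pert} (apex $x$, only the opposite side $[y,z]$ perturbed, so $u$ on $[x,z]$ and $v$ on $[x,y]$ are in exactly the configuration of that lemma) gives $|\hat u-v|\le|u-v|+2$.  This sidesteps your ``$\d_y<0$'' sub-case, where the appeal to ``the triangle inequality'' is not obviously enough to reach $+2$: there \emph{both} sides through $z$ differ from their primed counterparts, so a naive reduction would cost two applications of \rf{L:pert} and hence $+4$.

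For (b) your plan and the paper's (``apply \rf{L:pert} twice and then \rf{L:zipper1}'') coincide, and you have correctly isolated the one genuine issue: \rf{L:pert} preserves distances from its apex, whereas \rf{L:zipper1}(b) wants $|x{-}u|$ and $|y{-}v|$ preserved; with the apex at $z$ these drift by $h_1$ and $h_2$ across the two steps.  The paper's one-line sketch glosses over exactly this point, so your proposal is at the same level of detail as the paper here.  Your chaining does go through, though as written it produces $|u'-v'|\le|u-v|+4+h_1+h_2$; landing on the stated $+4$ requires either absorbing the tiny $h_1+h_2\le 2h$ into the constants inside the \rf{L:pert} calculation or tracking the cases of that proof a little more sharply.
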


\begin{proof}[Proof of \rf{T:glue1}]
Let $d$ be the glued metric on $X$. We first claim that any $h$-short path
$\g:[0,L]\to X$ for a pair of points $x,y\in X_1$ lies within a distance
$D/2+2h$ of a $h$-short path for this pair in $X_1$.

Suppose without loss of generality that $\g$ is parametrized by arclength
and not contained in $X_1$. The only parts of $\g$ that do not fully lie in
$X_1$ consist of disjoint subpaths $\g_i$, $i\in I$, where $I$ is a
countable index set and the endpoints of every $\g_i$ lie in $S$.

The distance between these endpoints is the same in either $X_1$ or $X_2$,
and $X_1$ is a length space, so we can replace these subpaths by subpaths in
$X_1$ whose combined length is at most the same as the combined length of
the $\g_i$ subpaths, as long as least one $\g_i$ is non-geodesic, an
assumption that we add for the moment. We therefore get a new $h$-short path
$\g':[0,L']\to X_1$ from $x$ to $y$, parametrized by arclength, with $L'\le
L$. By the triangle inequality, $d(\g(t),\g'(t))\le (D+h)/2+h$ for all $0\le
t\le L'$. This establishes the claim under the assumption that at least one
of the subpaths $\g_i$ is non-geodesic.

The argument when every $\g_i$ is geodesic is similar except that we may not
be able to replace them by geodesic subpaths in $X_1$. As long as
$L<d(x,y)+h$, we can choose the replacement subpaths so short as to
guarantee that the resulting path $\g':[0,L']\to X_1$ from $x$ to $y$ is
$h$-short, is parametrized by arclength, and again satisfies
$d(\g(t),\g'(t))\le (D+h)/2+h$ for all $0\le t\le L$.

The only remaining problem is when $L=d(x,y)+h$. It follows that the parts
of $\g$ other than the $\g_i$ subpaths cannot all be geodesic, so we can
take a non-geodesic subpath of $\g$ that is disjoint from every $\g_i$ and
has length at most $h/2$. We replace this non-geodesic subpath by a shorter
subpath that remains within $X_1$. We now have a path of length less than
$d(x,y)+h$ and we can proceed as in the last paragraph to construct a
$h$-short path $\g':[0,L']\to X_1$ from $x$ to $y$, parametrized by
arclength, satisfying $d(\g(t),\g'(t))\le (D+h)/2+h+h/2$ for all $0\le t\le
L'$. This finishes the proof of the claim.

In view of the above claim and \rf{C:infinitesimal}, it suffices to prove
the rCAT(0) condition for all $h$-short triangles with given vertices
$x,y,z$, where $h\le H$ for some $H=H(x,y,z)>0$, and considering only
$h$-short sides within $X_i$ for any pair of vertices that both lie in
$X_i$, $i=1,2$.

Thus it suffices to prove an rCAT(0) condition for a $h$-short triangle with
vertices $x,y,z$, where $x,y\in X_1$ and $z\in X_2$, and the path in the
triangle from $x$ to $y$ is $\g_{xy}:[0,L_{xy}]\to X_1$, with similar
notation for the other two sides. We assume that $h\le H$, where
$H:=1/3(1+(d(x,y)\maxo d(x,z)\maxo d(y,z))^2)$.

We may further assume that both $\g_{xz}^1:=\g_{xz}|_{[0,M_{xz}]}$ and
$\g_{yz}^1:=\g_{yz}|_{[0,M_{yz}]}$ lie in $X_1$, and both
$\g_{xz}^2:=\g_{xz}|_{[M_{xz},L_{xz}]}$ and
$\g_{yz}^2:=\g_{yz}|_{[M_{yz},L_{yz}]}$ lie in $X_2$, for some choice of
numbers $M_{xz}$ and $M_{yz}$. Of these four subpaths, we call the two with
superscript ``1'' the {\it initial segments} of the associated side of the
triangle, and the other two the {\it final segments} of the associated side.
We write $s_x:=\g_{xz}(M_{xz})$ and $s_y:=\g_{yz}(M_{yz})$.

Symmetry reduces the task of verifying the rCAT(0) condition for points
$u,v$ to the following five cases:
\begin{enumerate}
\item $u$ lies on the initial segment of $\g_{xz}$, and $v$ lies on
    $\g_{xy}$.
\item $u$ lies on the final segment of $\g_{xz}$, and $v$ lies on
    $\g_{xy}$.
\item $u,v$ lie on final segments of $\g_{xz}$ and $\g_{yz}$,
    respectively.
\item $u,v$ lie on initial segments of $\g_{xz}$ and $\g_{yz}$,
    respectively.
\item $u$ lies on the final segment of $\g_{xz}$, and $v$ lies on the
    initial segment of $\g_{yz}$.
\end{enumerate}

In Case (a), we first apply the Zipper Lemma \rf{L:zipper2}(a), with all
data as in that lemma except for the Lemma's $z'$ and $u'$: we take $z'=s_y$
and the $u'$ is taken to be a point on a $h$-short path $\la$ from $x$ to
$s_y$ whose distance to $x$ is $d(x,u)$, if such a point exists (which we
assume for now). Now $d(u,v)\le d(u,u')+d(u',v)$, and by the rCAT(0)
condition for the triangle with vertices $x,s_x,s_y$, we see that
$d(u,u')\le D+C$. Combining the rCAT(0) conditions for the triangles with
vertices $x,y,s_y$ with the Zipper Lemma and this estimate for $d(u,u')$, we
deduce the desired rCAT(0) inequality for the pair $u,v$ in the triangle
with vertices $x,y,z$.

If there is no point $u'$ on $\la$ with $d(u',x)=d(u,x)$, then take
$u'=s_y$, and so $d(u',x)<d(u,x)$. As in the last paragraph, we get an
rCAT(0) inequality for the pair $u'',v$, where $u''$ is a point on $\g_{xy}$
such that $d(u'',x)=d(u',x)$. But since
$$ d(u',x) \ge d(s_x,x) - D \ge d(u,x) - D - h\,, $$
and so $d(u',u)\le D+2h$. Since this quantity is bounded, the rCAT(0)
condition for $u'',v$ implies an rCAT(0) condition for $u,v$ (with a
parameter $C$ that is larger by $2D+4h$).

We next consider Case (b). First construct a ``comparison quadrilateral''
$\bQ$ with vertices $\bx,\by,\bs_y,\bs_x$ for the quadrilateral $Q$ with
vertices $x,y,s_y,s_x$. \rf{T:rough 4-pt} ensures that we can do this in a
certain sense, but we need less than guaranteed by that: in fact we need
only that distances between each of the four pairs of adjacent pairs of
adjacent vertices is preserved (such a ``comparison quadrilateral'' exists
for any quadrilateral in any metric space). We form a new metric space space
$(G,d_G)$ by gluing a filled Euclidean triangle with sides of length
$|\bs_x-\bs_y|=d(s_x,s_y)$, $d(y,z)-d(y,s_y)$, and $d(x,z)-d(x,s_x)$, to the
Euclidean plane along the line segment from $\bs_x$ to $\bs_y$. If we can
prove a variant rCAT(0) condition for the triangle with vertices $x,y,z$,
and $u,v$ as in Case (b) where we have all the usual inequalities and
equations of \rf{D:comp-tri}, but with the (geodesic) comparison triangle
$\bT$ in $G$ rather than the Euclidean plane, then the usual CAT(0)
condition follows by combining this variant rCAT(0) condition with the usual
CAT(0) condition for the comparison triangle in $G$; the fact that $G$ is
CAT(0) follows from the CAT(0) gluing theorem referred to after the
statement of \rf{T:glue1}. Since $u$ is on the final segment of $\g{xz}$ and
$v\in X_1$, we see that $d(u,v)=d(u,s)+d(s,v)$ for some $s\in S$, and so
$d(u,v)$ is within a distance $2D$ of $d(u,s_x)+d(s_x,v)$. Similarly if
$\bu,\bv$ are the comparison points for $u,v$ in $\bT$, then $d(\bu,\bv)$ is
within a distance $2D$ of $d(\bu,\bs_x)+d(bs_x,\bv)$. Since $d(u,s_x)$ and
$d(\bu,\bs_x)$ differ by at most $h$, it follows that the desired variant
rCAT(0) condition for $u,v$ follows from the usual rCAT(0) condition for the
pair of points $s_x,v$, as proven in Case (a) (once we increase the
parameter $C$ by $8D+2h$).

Case (c) follows easily from the fact that $u,v$ lie on a $h$-short triangle
in $X_2$ with vertices $z$, $u_0$, and $v_0$, with $d(u_0,v_0)\le D$; we
leave the details to the reader.

We next handle Case (d). Suppose first that we can find a point $w\in X_1$
such that $d(w,s_x)\le D+h$, $d(w,s_y)\le D+h$, and
$$ d(z,x)-d(w,x)+h_1 = d(z,y)-d(w,y)+h_2 \ge 0\,, $$
for some $0\le h_1,h_2\le 3h$. We pick $h$-short paths $\g_{xw}$ and
$\g_{yw}$ from $x$ to $w$, and from $y$ to $w$, respectively, and associated
points $u'$ on $\g_{xw}$ and $v'$ on $\g_{yw}$ such that $d(u',x)=d(u,x)$
and $d(v',y)=d(v,y)$ (as for $u''$ in Case (a), we let $u'$ and/or $v'$
equal $w$ if one or other of these last equations cannot be satisfied).
Applying \rf{L:zipper2}(b) with $3h$ playing the role of $h$ in the lemma,
it is clear that the the distance apart of the comparison points for $u,v$
in the comparison triangle $T_1=T(\bx,\by,\bz)$ for the triangle with
vertices $x,y,z$ is either larger, or smaller by at most $4$, than the
distance apart of the comparison points for $u,v$ in the comparison triangle
$T_2=T(\bx,\by,\bw)$ for the triangle with vertices $x,y,w$, assuming that
we choose the comparison points so that $d(u,x)=|\bu-\bx|$, and similarly
preserve $d(u',x)$, $d(v,y)$, and $d(v',y)$. Putting the rCAT(0) condition
for the pair $u,u'$ together with the estimates $d(u,u')\le D+h+C$ and
$d(v,v')\le D+h+C$, we get an rCAT(0) condition for $u,v$, as required.

It remains to find a point $w$ with the desired properties. Let
$\la:[0,L]\to X_1$ be a $h$-short path from $s_x$ to $s_y$, parametrized by
arclength. Let $w=\la(t)$ for some $t\in[0,L]$. Then $d(w,s_x)\le D+h$ and
$d(w,s_y)\le D+h$ whenever $w=\la$. Also let $\d_x:=d(x,z)-d(x,w)$ and
$\d_y:=d(y,z)-d(y,w)$.

When $t=0$, the $h$-shortness of $\g_{xz}$ implies that
$$ L_{xz}-M_{xz}+h \le \d_x+2h \le L_{xz}-M_{xz}+2h\,, $$
and the $h$-shortness of $\g_{xz}$ implies that
$$
-D\le -d(s_y,w) \le h+d(z,y)-d(z,s_y)-d(s_y,w) \le \d_y+h \le
  L_{xz}-M_{xz}+h\,.
$$
In particular, $\d_x+2h\ge\d_y$. Similarly when $t=L$ we get that
$\d_y+2h\ge\d_x$. It follows that for some $t\in[0,L]$ we have
$\d_x+h_1=\d_y+h_2$, for some non-negative numbers $h_1,h_2$ with
$h_1+h_2=3h$. (In fact we get $\d_x+h_1=\d_y+h_2$ for some non-negative
numbers $h_1,h_2$ satisfying $h_1+h_2\le 2h$ and $h_1h_2=0$, but it suits us
to increase both numbers so that $h_1+h_2=3h$.)

Note that
$$
L\le d(s_x,s_y)+h\le d(s_x,z)+d(z,s_y)+h\le L_{xz}-M_{xz}+L_{yz}-M_{yz}+h\,,
$$
and so
$$
d(w,x))+d(w,y) \le (M_{xz}+t)+(L-t+M_{yz}) \le L_{xz}+L_{yz}+h\le
  d(z,x)+d(z,y)+3h\,.
$$
It follows that $\d_x+\d_y+3h\ge 0$, and so $\d_x+h_1=\d_y+h_2\ge 0$, as
required.

Case (e) follows from Case (d) in the same way as Case (b) follows from Case
(a).
\end{proof}

It is often useful to glue an infinite number of spaces together, sometimes
along a single point or set, or sometimes at different places along some
base space. The following general gluing theorem says that for either of
these types of gluing of $C$-rCAT(0) spaces along uniformly bounded gluing
sets, we get another rCAT(0) space.

\begin{thm}\label{T:glue2}
Suppose we have a collection of $C$-rCAT(0) spaces $X_i$, $i\in I$, where
$I$ is some index set containing $0$ as an element. We write
$I^*=I\setminus\{0\}$. Suppose further that in each $X_i$, $i\in I^*$, we
have a closed subspace $S_i$ that is glued isometrically to a closed
subspace $T_i$ of $X_0$. Suppose further that $S_i$ (and $T_i$) is of
diameter at most $D<\infty$, $i\in I^*$. Then the resulting space $X$ is a
$C'$-rCAT(0) space for some $C'=C'(C,D)$.
\end{thm}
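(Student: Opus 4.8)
The plan is to reduce this infinite gluing to a bounded number of applications of the two–piece gluing theorem \rf{T:glue1}, exploiting that all the gluing sets sit inside the base $X_0$ and are uniformly bounded. The key metric observation I would record first is that no path can shortcut through a leaf: since each $S_i$ is glued isometrically to $T_i\subset X_0$, the glued metric satisfies $d|_{X_i\times X_i}=d_i$ for every $i\in I$, and any subpath of a path in $X$ that enters a leaf $X_k$ and returns to $X_0$ has both endpoints in $T_k$, so by isometry of the gluing its length is at least the $X_0$-distance between those endpoints. Hence, replacing such an excursion by an $X_0$-path never increases length. It follows that for any finite subfamily $X_0,X_{k_1},\dots,X_{k_n}$ the subset $X_0\cup X_{k_1}\cup\dots\cup X_{k_n}$ of $X$ carries exactly the glued metric of the corresponding finite gluing; that is, every finite sub-gluing embeds isometrically in $X$.

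Next, given a $h$-short triangle $T_h(x,y,z)$ whose vertices lie in pieces $X_p,X_q,X_r$ (any of which may be $X_0$), I would construct the finite gluing $Z$ of $X_0$ with the at most three distinct leaves among $X_p,X_q,X_r$ by applying \rf{T:glue1} at most three times in succession: glue the first leaf to $X_0$ along its (diameter $\le D$) gluing set, then glue the second leaf along $T_q$, which still lies in $X_0\subset Z$ and still has diameter at most $D$, and likewise the third. Each step is a legitimate instance of \rf{T:glue1}, the intermediate spaces being rough CAT(0) with constants depending only on $C$ and $D$; after at most three steps, $Z$ is $C'''$-\rcat(0) with $C'''=C'''(C,D)$. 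By the previous paragraph, $Z$ embeds isometrically in $X$.

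To verify the rough CAT(0) condition for $T_h(x,y,z)$ I would work, via \rf{C:infinitesimal}, with the \hrcat(0) formulation, so that $h$ may be taken as small as convenient. Exactly as in the opening claim of the proof of \rf{T:glue1}, each side of the triangle is pushed off every \emph{foreign} leaf --- one containing neither endpoint of that side --- by replacing each maximal foreign subpath (whose endpoints lie in some $T_k$, hence within $D$ of each other) by a path in $X_0$; this keeps the side $h$-short and displaces every point by at most $D/2+2h$. The vertices are fixed and already lie in $Z$, so after this modification the whole triangle, together with all its comparison points, lives in $Z$, while the comparison triangle is unchanged because it depends only on the (unaltered) vertex distances $d(x,y),d(y,z),d(z,x)$. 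Applying the rough CAT(0) condition for $Z$ and transferring back --- using that $Z\hookrightarrow X$ is isometric and that the relevant points moved by $O(D)$ --- yields the $C'$-\rcat(0) condition for the original triangle with $C'=C'(C,D)$.

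I expect the main obstacle to be the pushing step. A single $h$-short side may make countably many small excursions into foreign leaves, so one must replace all of them simultaneously while keeping the modified side $h$-short and bounding the total displacement uniformly by $O(D)$; one must also handle the delicate endpoint situations (for instance when a side has length exactly $d(x,y)+h$, so that the excursions cannot all be shortened) that already appear in the proof of \rf{T:glue1}. The remaining bookkeeping --- checking that three applications of \rf{T:glue1} leave the roughness constant a function of $C$ and $D$ alone, and that the finite sub-gluing is genuinely isometrically embedded --- is routine given the no-shortcut observation above.
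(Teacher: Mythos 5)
Your proposal is correct and follows essentially the same route as the paper's (sketched) proof: push each side of an $h$-short triangle off all foreign leaves using the claim from the start of the proof of \rf{T:glue1}, then conclude via at most three applications of \rf{T:glue1} gluing the relevant leaves to $X_0$. Your explicit no-shortcut observation, showing that the finite sub-gluing embeds isometrically in $X$, is a point the paper leaves implicit, but it is the same argument.
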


\begin{proof}[Sketch of proof]
Using a similar argument to the proof of the claim at the beginning of the
proof of \rf{T:glue1}, we see that for sufficiently small $h$, a $h$-short
path between $x\in X_i$ and $j\in X_j$, $i,j\in I$, is within a bounded
Hausdorff distance of a $h$-short path path that only passes through $X_i$,
$X_j$, and $X_0$. Thus we may restrict ourselves to examining $h$-short
triangles whose sides are of this type, and an rCAT(0) condition for any
pair of points on such a triangle with vertices in $X_i$, $X_j$, and $X_k$
follows from at most three appeals to \rf{T:glue1} (to glue $X_i$, $X_j$,
and $X_k$ to $X_0$).
\end{proof}

As mentioned earlier, if we glue a pair of CAT(0) spaces along a pair of
isometric convex subspaces, we get a CAT(0) space. It is tempting therefore
to suspect that if we glue a pair of rCAT(0) spaces along a pair of
isometric convex subspaces (or even isometric ``roughly convex'' subspaces,
whatever this should mean), we get an rCAT(0) space. However the following
example shows that this is false.

\begin{exa}\label{X:glue}
First let $X_1,X_2$ be two disjoint isometric copies of the closed Euclidean
upper half-plane $X:=\{(x,y): y\ge 0\}$. We write $I_i:X\to X_i$, $i=1,2$,
for the natural (isometric) identification maps. We will isometrically glue
the $y=0$ edges of $X_1$ and $X_2$ to the edges given by opposite sides of a
``warped ladder'' $Y$.

To construct $Y$, we begin with its two ``sides'' consisting of two disjoint
copies of $\R$: for $i=1,2$, let $\nu_i:\R\to Y$ be isometric maps to these
sides of $Y$. Next we define the {\it rung} $R_n$, $n\in\Z$, of the ladder
$Y$ to be a line segment of length $\exp(-|n|)$, with one endpoint glued to
$\nu_1(n)$ and the other to $\nu_2(n)$. These line segments $R_n$ are
pairwise disjoint, and disjoint from the lines $\nu_i(\R)$ except where
glued at their endpoints. We give $Y$ the glued metric $d_Y$. Thus adjacent
rungs are always a distance $1$ apart but the two sides are warped in the
sense that the distance from $\nu_1(n)$ to $\nu_2(n)$ decays exponentially
in $|n|$.

We now isometrically glue together $X_1$, $Y$, and $X_2$ along their edges
to get the glued space $Z=(X_1\sqcup_\R Y)\sqcup_\R X_2$. More precisely,
for each $x\in\R$ and $i=1,2$, we identify $I_i(x,o)$ with $\nu_i(x)$. We
denote the glued metric by $d$. As usual, it is convenient to consider
$X_1,X_2,Y$ to be subsets of $Z$.

Rather trivially $X_1,X_2$ are CAT(0), and so rCAT(0). Also $Y$ is rCAT(0):
probably the easiest way to see this is to note that $Y$ is roughly
isometric to $\R$ and so Gromov hyperbolic. Thus $Z$ is obtained by
isometrically gluing three rCAT(0) spaces along convex subsets and, if the
isometric gluing of two rCAT(0) spaces along a closed convex set were always
rCAT(0), then $Z$ would be rCAT(0) (just apply such a result twice).

However we claim that $(Z,d)$ is not rCAT(0). To see this note first that
$$ d_n(y):=d(I_i((0,y)),I_i((n,0))) = \sqrt{y^2+n^2} $$
is independent of $i$ and is an even function of $n\in\Z$, and the unique
geodesic of length $d_n$ in $Z$ is a line segment in $X_i$ between these
points.

Clearly $Z$ is proper, and $d_n(\cdot)$ is increasing and unbounded as a
function of $|n|$, so we readily deduce that there exists a geodesic segment
from $I_1((0,y))$ to $I_2((0,y))$ for all $y>0$. Moreover
$$ d_n(y)-d_0(y) = \sqrt{y^2+n^2}-y\to 0 \qquad (y\to\infty)\,. $$
Fixing $n\in\N$ and choosing $y$ so large that $d_n(y)-d_0(y) <
e^{-n+1}-e^{-n}$, we ensure that one geodesic segment between $I_1((0,y))$
and $I_2((0,y))$ in $Z$ must cross $Y$ along a rung $R_N$ for some $N\ge n$.
By symmetry, another geodesic segment between these two points goes via
$R_{-N}$. Letting $n\to\infty$, we therefore have a pair of geodesic
segments with the same endpoints such that the distance between their
midpoints is greater than $2n$, and so can be arbitrarily large. Such a
configuration is incompatible with the rCAT(0) condition.
\end{exa}

\bigskip

Finally, we show that there are no interesting examples among the class of
normed real vector spaces. As is well known, such spaces are CAT(0) if and
only if they are inner product spaces \cite[II.1.14]{BH}. It is
straightforward to use the dilation structure of such spaces to show that
they must be CAT(0) if they are rCAT(0); we give the details for
completeness.

\begin{prop}\label{P:VS}
Suppose $(V,\|\cdot\|)$ is a normed real vector space with distance
$d(x,y)=\|x-y\|$. Then $V$ is rCAT(0) if and only if it is CAT(0).
\end{prop}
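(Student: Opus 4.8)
The plan is to prove the two implications separately, the forward one being immediate. If $V$ is CAT(0), then \rf{C:CAT0 is rCAT0} shows it is $(2+\sqrt3)$-rCAT(0), hence rCAT(0), so all the work is in the converse. Assume $V$ is $C$-rCAT(0) for some $C>0$. Since a normed space is geodesic (straight line segments are geodesics), by the CN characterization of CAT(0) recalled in \rf{S:Prelims} it suffices to verify the CN inequality
$$
d(x,m)^2 \le \tfrac12\,d(x,y)^2 + \tfrac12\,d(x,z)^2 - \tfrac14\,d(y,z)^2
$$
for every $x,y,z\in V$, where $m:=(y+z)/2$ is the (geodesic) midpoint of $y$ and $z$.

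The engine of the proof is the dilation structure: the scaling map $w\mapsto\la w$ multiplies all distances by $\la>0$ and sends the midpoint of $\la y,\la z$ to $\la m$. First I would fix $x,y,z$ and, for each $\la>0$, form the geodesic triangle on $\la x,\la y,\la z$ built from straight segments. Because a $0$-short segment is $h$-short for every $h\ge0$, this is a legitimate $H(\la x,\la y,\la z)$-short triangle, so the rough CAT(0) condition applies to it. Taking $v:=\la x$ (a vertex) and $u:=\la m$ on the opposite side $[\la y,\la z]$ — exactly the instance allowed in the \emph{weak} rough CAT(0) condition — the Euclidean midpoint of the comparison side is a comparison point for $u$, and the Euclidean parallelogram law evaluates the comparison distance \emph{exactly}. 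Writing $R:=\tfrac12 d(x,y)^2+\tfrac12 d(x,z)^2-\tfrac14 d(y,z)^2$, the comparison distance equals $\la\sqrt R$, so the rough CAT(0) inequality reads $\la\,d(x,m)=d(\la x,\la m)\le \la\sqrt R + C$. Dividing by $\la$ and letting $\la\to\infty$ kills the additive constant and yields $d(x,m)\le\sqrt R$, which is precisely the CN inequality; hence $V$ is CAT(0).

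The only delicate points to get right — and the step I expect to be the real (if modest) crux — are that the Euclidean midpoint of the comparison side is genuinely a comparison point for $\la m$, so that the parallelogram law gives the comparison distance as an equality rather than merely a bound, and that the fixed roughness constant $C$ does not scale, so that it is washed out in the limit $\la\to\infty$. Everything else is routine. As an alternative packaging of the same idea, one could instead invoke \rf{C:limit}(c): since $w\mapsto \la w$ is an isometry of $(V,d)$ onto $(V,d/\la)$, the asymptotic cone $\mathrm{Cone}_\omega V$ contains an isometric copy of $V$ and is CAT(0), whence the CN inequality (a statement about three points and an explicit midpoint) descends to $V$. I would present the elementary dilation argument, mentioning the asymptotic-cone route only as a remark.
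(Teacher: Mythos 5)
Your engine---dilation invariance of $(V,\|\cdot\|)$ washes out the fixed additive constant $C$---is exactly the engine of the paper's proof, and the points you flag as delicate are handled correctly: a $0$-short triangle is $h$-short for every $h\ge 0$, the Euclidean midpoint of the comparison side is a genuine comparison point for $\la m$, the parallelogram law computes the comparison distance exactly, and $C$ does not scale. The paper packages the same idea differently: it takes an \emph{arbitrary} geodesic triangle $T$, \emph{arbitrary} points $u,v$ on different sides with comparison points $\bu,\bv$, and argues by contradiction (if $d(u,v)=|\bu-\bv|+\e$ with $\e>0$, dilating by $R>C/\e$ violates the rCAT(0) inequality), thereby verifying the full CAT(0) comparison condition directly and never passing through the CN inequality at all.

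Precisely because you do pass through CN, your proof as written has a gap in the reduction step. The CN characterization recalled in \rf{S:Prelims} quantifies over \emph{all} $m$ with $d(y,m)=d(m,z)=d(y,z)/2$, not just the affine midpoint $m=(y+z)/2$; your parenthetical ``the (geodesic) midpoint'' silently assumes midpoints are unique, and this fails exactly for the non-strictly-convex norms (e.g.\ the $\ell^\infty$ plane) that the proposition must exclude: there a pair $y,z$ has many metric midpoints, each lying on some geodesic from $y$ to $z$, so verifying CN only at $(y+z)/2$ does not license the conclusion ``hence $V$ is CAT(0).'' The gap admits two easy repairs. (i) Run your dilation argument for an \emph{arbitrary} metric midpoint $m$: the concatenation of the straight segments $[y,m]$ and $[m,z]$ has length $d(y,z)$, hence is a geodesic usable as the side from $\la y$ to $\la z$; the Euclidean midpoint of the comparison side is still a comparison point for $\la m$, and the limit $\la\to\infty$ gives CN for this $m$. (ii) Alternatively, note that affine-midpoint CN, with $a=x-y$ and $b=x-z$, reads $\|a\|^2+\|b\|^2\ge\tfrac12\|a+b\|^2+\tfrac12\|a-b\|^2$ for all $a,b$; replacing $(a,b)$ by $(a+b,a-b)$ yields the reverse inequality, so the parallelogram law holds exactly, $V$ is an inner product space, and $V$ is CAT(0) by \cite[II.1.14]{BH}, which the paper itself cites. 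With either repair your argument is complete; the same caveat (and the same fix) applies to your asymptotic-cone remark, since the isometric embedding of $V$ into the cone transports arbitrary metric midpoints, not only affine ones.
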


\begin{proof}
Suppose $(V,d)$ is $C$-rCAT(0). Being a normed vector space, $V$ is
certainly a geodesic space. We wish to prove the CAT(0) condition for a
fixed but arbitrary geodesic triangle $T$ with vertices $x,y,z\in V$. The
translation invariance of $d$ allows us to assume without loss of generality
that $x=0$. Let $\bT$ be a comparison triangle in $M^2_0$ with vertex at $0$
corresponding to $x=0$, let $u,v$ be points on different sides of $T$ and
let $\bu,\bv$ be the respective comparison points on $\bT$.

We now exploit the dilation invariance of $V$. Given a geodesic $\g:[0,L]\to
V$ from $a\in V$ to $b\in V$, we get a dilated geodesic $R\g:[0,L]\to V$
from $Ra$ to $Rb$ for any given $R>0$ by defining $(R\g)(t)=R\g(t)$. If we
dilate our geodesic triangle $T$ in this manner, we get a geodesic triangle
which we call $RT$, and it is clear that the similarly dilated Euclidean
triangle $R\bT$ is a comparison triangle for $RT$, and that $R\bu,R\bv$ are
respective comparison points for $Ru,Rv\in RT$. Furthermore if
$d(u,v)=|\bu-\bv|+\e$ for some $\e>0$, then $d(Ru,Rv)=|R\bu-R\bv|+R\e$, so
by taking $R>C/\e$ we contradict the rCAT(0) inequality. Thus the rCAT(0)
condition can only hold if the CAT(0) condition holds.
\end{proof}

\begin{rem}
\label{R:lp-prod} It follows from the above theorem that we cannot change
the $l^2$-product in \rf{T:product-rCAT0} to an $l^p$-product for any $p\ne
2$, since certainly the $l^p$-product of two Euclidean lines is rCAT(0) only
when $p=2$.
\end{rem}



\begin{thebibliography}{99}

{\footnotesize

\bibitem{AB} J.M. Alonso and M.R. Bridson, `Semihyperbolic groups', Proc.
    London Math. Soc. (3), {\bf 70} (1995), 56--114.

\bibitem{BBI} D. Burago, Y. Burago and S. Ivanov, `A Course in Metric
    Geometry', Graduate Studies in Mathematics, {\bf 33}, AMS, 2001.

\bibitem{Be} A.F. Beardon, The geometry of discrete groups, Springer Verlag,
    New York, 1983.

\bibitem{Br} M.R. Bridson, `Non-positive curvature and complexity for
    finitely presented groups' in {\it International Congress of
    Mathematicians. Vol. II}, 961--987, Eur. Math. Soc., Z\"urich 2006.

\bibitem{BH} M.R. Bridson and A. Haefliger, `Metric spaces of non-positive
    curvature', Springer Verlag, Berlin, 1999.

\bibitem{BT} F. Bruhat and J. Tits, `Groupes r\'eductifs sur un corps local.
    I. Done\'es radicielles valu\'ees', Inst. \'Etudes Sci. Publ. Math. {\bf
    41} (1972), 5--251.

\bibitem{BK} M. Bucher and A. Karlsson, `On the definition of bolic
    spaces', Expo. Math. {\bf 20} (2002), 269--277.

\bibitem{BF} S. Buckley and K. Falk, `The boundary at infinity of a Rough
    CAT(0) space', preprint.

\bibitem{BFW} S. M. Buckley, K. Falk, D. J. Wraith, `Ptolemaic spaces
    and $\hbox{CAT}(0)$', Glasgow J. Math. {\bf 51} (2009), 301--314.

\bibitem{Bu} H. Busemann, `Spaces with non-positive curvature', Acta Math.
    {\bf 80} (1948), 259--310.

\bibitem{CDP} M. Coornaert, T. Delzant, and A. Papadopoulos, G\'eometrie
    et th\'eorie des groupes, Lecture Notes in Mathematics 1441, Springer,
    Berlin, 1990.

\bibitem{DG} T. Delzant and M. Gromov, `Courbure m\'esoscopique et th\'eorie
    de la toute petite simplification', J. Topology {\bf 1} (2008), 804--836.

\bibitem{FLS} T. Foertsch, A. Lytchak, V. Schroeder, `Nonpositive
    curvature and the Ptolemy inequality',  Int. Math. Res. Not. IMRN {\bf
    2007}, no. 22, Art. ID rnm100, 15 pp.

\bibitem{GS} S. M. Gersten and H. Short, `Rational subgroups of biautomatic
groups', Ann. of Math. {\bf 134} (1991), 125--158.

\bibitem{GH} E. Ghys and P. de la Harpe (Eds.), `Sur les groupes
    hyperboliques d'apr\'es Mikhael Gromov', Progress in Math. 38,
    Birkh\"auser, Boston, 1990.

\bibitem{G} M. Gromov, `Mesoscopic curvature and hyperbolicity' in
    {\it Global differential geometry: the mathematical legacy of Alfred
    Gray}, 58--69, Contemp. Math. 288, Amer. Math. Soc., Providence, RI,
    2001.

\bibitem{KS1} G. Kasparov and G. Skandalis, `Groupes ``boliques'' et
    conjecture de Novikov', Comptes Rendus {\bf 158} (1994), 815--820.

\bibitem{KS2} G. Kasparov and G. Skandalis, `Groups acting properly on
    ``bolic'' spaces and the Novikov conjecture', Ann. Math. {\bf 158} (2003),
    165--206.

\bibitem{O} S.-I. Ohta, `Convexities of metric spaces', Geom. Dedicata
    {\bf 125} (2007), 225--250.

\bibitem{Re} Y.G. Reshetnyak, `On the theory of spaces of curvature not
    greater than $K$', Mat. Sb. {\bf 52} (1960),   789--798.

\bibitem{Va} J. V\"ais\"al\"a, `Gromov hyperbolic spaces', Expo. Math. {\bf
    23}  (2005),  no. 3, 187--231.

}

\end{thebibliography}
\end{document}